\newtheorem{thm}{Theorem}[section]
\newtheorem{thmintro}{Theorem}
\newtheorem{lem}[thm]{Lemma}
\newtheorem{prop}[thm]{Proposition}
\newtheorem{cor}[thm]{Corollary}
\theoremstyle{definition}
\newtheorem{defn}[thm]{Definition}
\theoremstyle{remark}
\newtheorem{remark}[thm]{Remark}
\newtheorem{exam}[thm]{Example}
\newtheoremstyle{outlinenotes}{}{}{\color{blue}}{}{\color{blue}\bfseries}{}{ }{}
\theoremstyle{outlinenotes}
\newtheoremstyle{todonotes}{}{}{\color{red}}{}{\color{red}\bfseries}{}{ }{}
\theoremstyle{todonotes}
\newcommand{\Deltaac}{\Delta_{\rm ac}}
\newcommand{\Deltain}{\Delta_{\rm in}}
\newcommand{\Simp}{\mathbb{\Delta}}
\newcommand{\Simpop}{\Simp^{\rm op}}
\newcommand{\Simpac}{\Simp_{\rm ac}}
\newcommand{\Simpin}{\Simp_{\rm in}}
\newcommand{\Set}{{\rm Set}}
\newcommand{\sSet}{\Set_\Delta}
\newcommand{\Fin}{{\rm Fin}}
\newcommand{\cFin}{\mathcal{F}{\rm in}}
\newcommand{\sSetf}{\Fin_\Delta}
\newcommand{\sSetfop}{(\sSetf)^{\rm op}}
\newcommand{\csSetf}{\cFin_\Simp}
\newcommand{\csSetfop}{(\csSetf)^{\rm op}}
\newcommand{\Finpt}{\Fin_*}
\newcommand{\cFinpt}{\cFin_*}
\newcommand{\FinDel}{\Finpt\times\Delta^{\rm op}}
\newcommand{\cFinDel}{\cFinpt \times \Simpop}
\newcommand{\Cat}{{\rm Cat}}
\newcommand{\Kan}{{\rm Kan}}
\newcommand{\qCat}{{\rm qCat}}
\newcommand{\Catoo}{\mathcal{C}{\rm at}_\infty}
\newcommand{\CSS}{{\rm CSS}}
\newcommand{\cCSS}{\mathcal{C}{\rm SS}}
\newcommand{\Bicatoo}{\mathcal{C}{\rm at}_{\inftwo}}
\newcommand{\Alg}{{\rm Alg}}
\newcommand{\cAlg}{\mathcal{A}{\rm lg}}
\newcommand{\csmAlg}{\cAlg^\amalg}
\newcommand{\cCoalg}{\mathcal{C}{\rm oalg}}
\newcommand{\csmCoalg}{\cCoalg^\amalg}
\newcommand{\Fun}{{\rm Fun}}
\newcommand{\cFun}{\mathcal{F}{\rm un}}
\newcommand{\Map}{{\rm Map}}
\newcommand{\cMap}{\mathcal{M}{\rm ap}}
\def\cSpan#1{\mathcal{S}{\rm pan}\left({#1}\right)}
\def\csmtSpan#1{\mathcal{S}{\rm pan}_2^\times \left({#1}\right)}
\def\cSeg#1{\mathcal{S}{\rm eg}\left({#1}\right)}
\def\cSego#1{\mathcal{S}{\rm eg}_0\left({#1}\right)}
\def\Pyr#1{\Sigma^{#1}}
\def\cPyr#1{\mathbb{\Sigma}^{#1}}
\def\Wedge#1{\Lambda^{#1}}
\def\cWedge#1{\mathbb{\Lambda}^{#1}}
\def\Cart#1{\Pi({#1})}
\def\cCart#1{\mathbb{\Pi}(#1)}
\def\Cartne#1{\Delta_{\mathrm{inj}/{#1}}}
\def\Cartop#1{\Pi({#1})^{\rm op}}
\def\cCartop#1{\mathbb{\Pi}(#1)^{\rm op}}
\def\Sing#1{P({#1})}
\def\cSing#1{\mathcal{P}({#1})}
\def\Singop#1{P({#1})^{\rm op}}
\def\cSingop#1{\mathcal{P}({#1})^{\rm op}}
\def\Unstr#1{\left\langle{#1}\right\rangle}
\def\sp#1{{\rm sp}\left(#1\right)}
\newcommand{\cA}{\mathcal{A}}
\newcommand{\cB}{\mathcal{B}}
\newcommand{\cC}{\mathcal{C}}
\newcommand{\fC}{\mathfrak{C}}
\newcommand{\cD}{\mathcal{D}}
\newcommand{\fG}{\mathfrak{G}}
\newcommand{\cM}{\mathcal{M}}
\newcommand{\cS}{\mathcal{S}}
\newcommand{\cU}{\mathcal{U}}
\newcommand{\cX}{\mathcal{X}}
\newcommand{\inftwo}{(\infty,2)}
\newcommand\Sp{\mathcal{S}{\rm p}}
\newcommand\Un{{\rm Un}}
\newcommand{\actmorR}{\rightarrow\Mapsfromchar}
\newcommand{\actmorL}{\Mapstochar\leftarrow}
\newcommand\inmorR{\rightarrowtail}
\newcommand\op{{\rm op}}
\def\Simplex#1{\Delta\left[{#1}\right]}
\def\Simplexn#1{\nabla\left[{#1}\right]}
\newcommand{\id}{{\rm Id}}
\renewcommand{\lim}{\operatornamewithlimits{lim}}
\newcommand{\colim}{\operatornamewithlimits{colim}}
\newcommand{\adjRelayII}[3][2.2em]{\ensuremath{\SelectTips{cm}{10}\xymatrix@C=#1@1{{#2} \ar@<1ex>[r]^-{\ArgI}^-{}="1" & {#3} \ar@<1ex>[l]^-{\ArgII}^-{}="2" \ar@{}"1";"2"|(.3){\hbox{}}="7" \ar@{}"1";"2"|(.7){\hbox{}}="8" \ar@{|-} "8" ;"7"}}}
\newcommand{\radj}[1][]{\def\ArgI{#1}\radjRelayI}
\newcommand{\radjRelayI}[1][]{\def\ArgII{#1}\radjRelayII}
\newcommand{\radjRelayII}[3][2.2em]{\ensuremath{\SelectTips{cm}{10}\xymatrix@C=#1@1{{#2} \ar@<-1ex>[r]_-{\ArgI}^-{}="1" & {#3} \ar@<-1ex>[l]_-{\ArgII}^-{}="2" \ar@{}"1";"2"|(.3){\hbox{}}="7" \ar@{}"1";"2"|(.7){\hbox{}}="8" \ar@{|-} "7" ;"8"}}}
\numberwithin{equation}{section}
\title{Simplicial spaces, lax algebras and the $2$-Segal condition}
\author{Mark D Penney\footnote{ email: \href{mailto:mpenney@mpim-bonn.mpg.de}{mpenney@mpim-bonn.mpg.de}}}
\date{}
\begin{document}
 \maketitle
 \begin{abstract}
 Dyckerhoff--Kapranov \cite{DK12} and G\'alvez-Carrillo--Kock--Tonks \cite{KockI} independently introduced the notion of a {\em $2$-Segal space}, that is, a simplicial space satisfying $2$-dimensional analogues of the Segal conditions, as a unifying framework for understanding the numerous Hall algebra-like constructions appearing in algebraic geometry, representation theory and combinatorics. In particular, they showed that every $2$-Segal object defines an algebra object in the {\em $\infty$-category of spans}. 
 
 In this paper we show that this algebra structure is inherited from the {\em initial simplicial object} $\Simplex{\bullet}$. Namely, we show that the standard $1$-simplex $\Simplex{1}$ carries a {\em lax algebra} structure. As a formal consequence the space of $1$-simplices of a simplicial space is also a lax algebra. We further show that the $2$-Segal conditions are equivalent to the associativity of this lax algebra.
 \end{abstract}
 
 \tableofcontents

 \section{Introduction}
 \label{Intro}

  Simplicial objects satisfying the Segal conditions \cite{RezkCSS} are used throughout homotopy theory and higher category theory to encode coherently associative algebras. Recently, Dyckerhoff--Kapranov \cite{DK12} and G\'alvez-Carrillo--Kock--Tonks \cite{KockI} independently introduced a generalisation of Rezk's Segal conditions, the {\em $2$-Segal conditions}. They showed that simplicial objects satisfying the $2$-Segal conditions encode algebra objects in {\em $\infty$-categories of spans}. In this paper we shall elucidate the exact role played by the $2$-Segal condition in the construction of these algebra objects.
  
  Specifically, we provide a novel construction of the algebra in the $\infty$-category of spans associated to a $2$-Segal object. We do so by exploiting the fact that the initial simplicial object in an $\infty$-category having finite limits is 
  \begin{equation*}
  \Simplex{\bullet}: \Simpop \to \csSetfop,
  \end{equation*}
where $\csSetf$ is the nerve of the category of level-wise finite simplicial sets. The following diagram
\begin{equation}
\label{DeltaProd}
 \includegraphics[height=1.45cm]{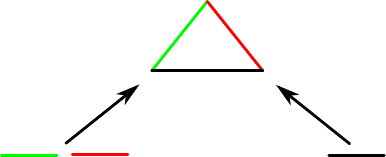}
\end{equation}
endows the standard $1$-simplex $\Simplex 1$ with a product $\mu$ as an object of the $\infty$-category of spans in $\csSetfop$. While the product $\mu$ fails to be associative, the diagram
\begin{equation*}
 \includegraphics[height=4.5cm]{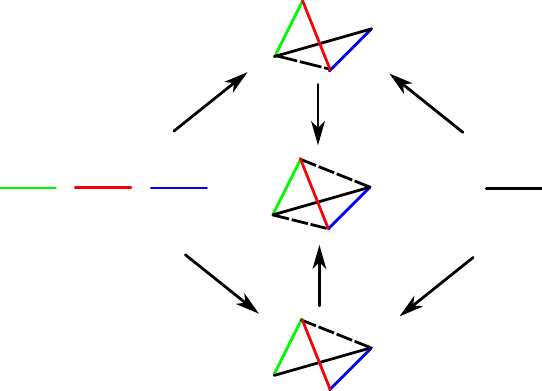}
\end{equation*}
defines a non-invertible $2$-morphism in the {\em $\inftwo$-category of bispans} \cite{RuneSpans} in $\csSetfop$ from $\mu \circ (\mu\times\id)$ to $\mu \circ ( \id\times\mu)$ which witnesses its {\em lax associativity}. We show the following:

\begin{thmintro}
\label{SimplexSumm}
 The standard $1$-simplex $\Simplex 1$ is a {\em lax algebra} in the $\inftwo$-category of bispans in $\csSetfop$ with product $\mu$.
\end{thmintro}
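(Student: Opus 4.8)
The plan is to read the entire lax algebra structure, coherences included, off the initial simplicial object $\Simplex{\bullet}$ itself, letting the simplices $\Simplex n$ supply the higher data uniformly. A lax algebra in $\ctSpan{\csSetfop}$ consists of an object together with a multiplication and a unit $1$-morphism, an associativity $2$-morphism and unit $2$-morphisms, all subject to coherence; in the $\inftwo$-setting these do not reduce to a finite list of axioms but must be produced as a single coherent datum. I take the underlying object to be $\Simplex 1$, the multiplication to be the span $\mu$ of \eqref{DeltaProd}, and the unit to be the span built from $\Simplex 0$ and the degeneracy $s_0\colon\Simplex 0\to\Simplex 1$ of $\Simplex{\bullet}$. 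The first step is to compute the two composites $\mu\circ(\mu\times\id)$ and $\mu\circ(\id\times\mu)$. Since composition of spans in $\csSetfop$ is computed by a pullback, hence by a pushout in $\csSetf$, unwinding \eqref{DeltaProd} shows that each composite glues two copies of $\Simplex 2$ along a shared edge; the apexes are therefore the two triangulations of the square on $\{0,1,2,3\}$, namely $\Simplex{\{0,1,2\}}\cup_{\Simplex{\{0,2\}}}\Simplex{\{0,2,3\}}$ and $\Simplex{\{0,1,3\}}\cup_{\Simplex{\{1,3\}}}\Simplex{\{1,2,3\}}$.

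The associator is then built from $\Simplex 3$. Both triangulated squares are subcomplexes of the tetrahedron $\Simplex 3$, being the unions of the pairs of $2$-faces $(d_3,d_1)$ and $(d_2,d_0)$ respectively; reversing these two inclusions in $\csSetfop$ yields a span with apex $\Simplex 3$ from the first composite to the second. This is exactly the non-invertible $2$-morphism of the displayed lax associativity diagram, and it is non-invertible precisely because neither triangulated square exhausts the tetrahedron. The same mechanism supplies all higher coherence: the triangulations of the $(n+1)$-gon are subcomplexes of $\Simplex n$, any two differing by a single diagonal flip are related by a local copy of the associator, and $\Simplex n$ witnesses the top cell. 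These data are indexed by the faces of the associahedra, the pentagonal associahedron, whose vertices are the five triangulations of the $5$-gon, together with its filling by $\Simplex 4$ supplying the pentagon coherence.

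It remains to organise this into a genuine lax algebra. Rather than check the pentagon, the triangle identities and their higher analogues one at a time, I would package the assignment $[n]\mapsto\Simplex n$, together with its triangulation subcomplexes and their inclusions, as a single functorial datum, so that the coherences are forced by the simplicial identities of $\Simplex{\bullet}$ and the combinatorics of the associahedra rather than verified by hand. The computational core, and the step I expect to be the main obstacle, is to show that span composition really is computed by the asserted pushouts --- that the squares exhibiting composites of $\mu$ are pushouts of coface inclusions in $\csSetf$ --- and that the resulting identifications of composites with triangulation subcomplexes of $\Simplex n$ are natural in $\Simpop$. Granting this naturality, each coherence becomes an equality of subcomplexes of the relevant $\Simplex n$ that holds on the nose; the difficulty lies entirely in the bookkeeping of these pushouts and their naturality, and it is here that the initiality of $\Simplex{\bullet}$ does the essential work, guaranteeing that the finite, checkable data propagates coherently to all orders and ultimately transfers to the $1$-simplices $X_1$ of an arbitrary simplicial object $X$.
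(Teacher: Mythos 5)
Your low-dimensional picture is exactly right and matches the paper's informal description: the composites $\mu\circ(\mu\times\id)$ and $\mu\circ(\id\times\mu)$ are computed by pushouts in $\csSetf$ and give the two triangulations of the square, and the associator is the bispan whose apex is $\Simplex{3}$, into which both triangulations include as unions of $2$-faces. But there is a genuine gap, and it sits precisely in the step you defer: ``package the assignment $[n]\mapsto\Simplex{n}$ \ldots{} as a single functorial datum, so that the coherences are forced.'' In the framework of this paper a lax algebra is not an object with a product, an associator, and a (finite or associahedron-indexed) list of coherence cells to check; it is \emph{defined} to be a symmetric monoidal lax functor $\csmAlg\rightsquigarrow\csmtSpan{\csSetfop}$, i.e.\ a map of cocartesian fibrations $\Un(\csmAlg)\to\Un\left(\csmtSpan{\csSetfop}\right)$ over $\cFinDel$ preserving cocartesian lifts of $\cFinpt\times(\Simpin)^{\rm op}$. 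So one must produce, for \emph{every} $k$-simplex of $\Un(\csmAlg)$ --- every string of composable morphisms in $\Alg$ together with the $\cFinpt$-direction encoding the monoidal structure --- a compatible $k$-simplex in the unstraightening of the bispan category, naturally in all face and degeneracy maps. Your associahedron/diagonal-flip heuristic neither matches this indexing nor produces such simplices; restating that the data should be ``functorial'' is a restatement of the theorem, not a proof of it.

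Concretely, two ingredients are missing and constitute most of the paper's actual work. First, one needs a usable description of the unstraightening of $\csmtSpan{\csSetfop}$ at all: the paper proves the bispan construction is semistrict, computes the relevant colimits of twisted-arrow diagrams as the Grothendieck-construction posets $M_\phi$, and identifies a sub-simplicial set of the unstraightening whose $k$-simplices are cartesian, vertically constant functors $\Cartop{f(0)}\times\Pyr{M_\phi}\to\sSetfop$. Second, one needs a uniform recipe producing such a functor from a $k$-simplex of $\Un(\csmAlg)$: the paper does this by first building a \emph{normal oplax functor} $\Singop{f(0)}\times\phi(0)\nrightarrow\sp{\sSetfop}$ (your spans $X\cdot\Simplex{1}\leftarrow\coprod_y\Simplex{|p^{-1}(y)|}\rightarrow Y\cdot\Simplex{1}$ are exactly its values on morphisms), converting it to an honest functor on the twisted arrow category via the bijection $\mathrm{Hom}\left(\Pyr{D},C\right)\simeq\mathrm{Hom}^{\rm n.oplax}\left(D,\sp{C}\right)$, and then right Kan extending along $M_\phi\to\phi(0)$. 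In this setup the higher coherences are not checked against associahedra; they hold because both sides of the oplax associativity equation are universal morphisms out of the same colimit, and simplicial functoriality plus the inert-cocartesian condition (which you never address, and which is where the symmetric monoidal structure and cocartesianness of $\theta$ enter) are verified directly. Your intuition that everything is ``forced by $\Simplex{\bullet}$'' is correct in spirit, but the mechanism that forces it is this twisted-arrow/Kan-extension machinery, not the simplicial identities together with polytope combinatorics.
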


From any simplicial object $\cX_\bullet\in \cC_\Simp$ in an $\infty$-category $\cC$ having finite limits one has a finite limit preserving functor $\cX:\csSetfop \to \cC$ given by right Kan extension,
\begin{equation*}
 \xymatrixrowsep{1.1pc} \xymatrix{ \csSetfop \ar[r]^-\cX & \cC \\
 \Simpop \ar@{^{(}->}[u] \ar[ru]_-\cX & }
\end{equation*}
The simplicial object $\cX_\bullet$ is then the image of $\Simplex{\bullet}$ under the right Kan extension. As a formal consequence of Theorem \ref{SimplexSumm} one obtains a lax algebra structure on $\cX_1$ as an object of the $\inftwo$-category of bispans in $\cC$. We show that the $2$-Segal condition is equivalent to the associativity of this lax algebra. 

\begin{thmintro}
 \label{MainSummAlg} 
 The object of $1$-simplices $\cX_1$ of a simplicial object $\cX \in \cC_\Simp$ is canonically a lax algebra in the $\inftwo$-category of bispans in $\cC$. Furthermore, $\cX_\bullet$ satisfies the $2$-Segal conditions if and only if $\cX_1$ is an algebra object.  
\end{thmintro}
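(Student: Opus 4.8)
The plan is to deduce Theorem~\ref{MainSummAlg} from Theorem~\ref{SimplexSumm} by transporting structure along the finite-limit-preserving functor $\cX\colon\csSetfop\to\cC$, and then to recognise the $2$-Segal conditions as exactly the invertibility of the structural $2$-morphisms that result. For the first assertion I would begin by recording that $\ctSpan{-}$ is functorial in finite-limit-preserving functors: such a functor $\cC'\to\cC$ sends spans to spans and bispans to bispans, and hence induces an $\inftwo$-functor $\ctSpan{\cC'}\to\ctSpan{\cC}$, citing \cite{RuneSpans} for this functoriality. Applying it to $\cX$ yields $\ctSpan{\cX}\colon\ctSpan{\csSetfop}\to\ctSpan{\cC}$. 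Since a lax algebra is a lax functor into the target $\inftwo$-category, post-composition with an $\inftwo$-functor carries lax algebras to lax algebras, transporting the product along $\ctSpan{\cX}$. As $\cX(\Simplex 1)=\cX_1$, the image of the lax algebra of Theorem~\ref{SimplexSumm} is a lax algebra structure on $\cX_1$ with product $\ctSpan{\cX}(\mu)$, and naturality in $\cX$ supplies the asserted canonicity.

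For the equivalence with the $2$-Segal condition I would first unwind what ``algebra object'' means here, namely that every structural $2$-morphism of the lax algebra is an equivalence. The lax algebra on $\Simplex\bullet$ assembles a family of non-invertible $2$-morphisms comparing the iterated products, and I would make explicit that each is represented by a span in $\csSetfop$ whose legs are the inclusions of triangulated polygons into the standard simplex $\Simplex n$; the associator (the second displayed diagram of the introduction) is the case $n=3$ comparing the two triangulations of the square inside $\Simplex 3$. Because $\cX$ sends finite colimits of simplicial sets to finite limits in $\cC$, applying $\cX$ turns the inclusion of a triangulation into the comparison map $\cX_n\to\cX(\text{triangulation})$, which is precisely the $2$-Segal map for that triangulation. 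A $2$-morphism of $\ctSpan{\cC}$ is an equivalence exactly when both legs of its representing span are equivalences in $\cC$—the hom-$\infty$-categories of $\ctSpan{\cC}$ being span categories—so $\cX_1$ is an algebra object iff all of these comparison maps are equivalences, i.e.\ iff $\cX_\bullet$ is $2$-Segal. Should the working notion of associativity engage only the associator, I would close the remaining gap with the standard reduction that the full $2$-Segal condition is generated by the $n=3$ conditions, peeling ears and using the $2$-out-of-$3$ property of equivalences \cite{DK12,KockI}.

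The formal heart of the argument is the transport step, and the delicate point there is the coherence bookkeeping behind the claim that $\ctSpan{-}$ is an $\inftwo$-functor preserving lax algebras; once this is in hand it is purely formal. The genuine obstacle I anticipate is the identification step: matching the structural $2$-cells of the universal lax algebra on $\Simplex\bullet$ with the triangulation inclusions precisely enough that their images under $\cX$ are literally the $2$-Segal maps, and reconciling the $\inftwo$-categorical invertibility criterion (both legs equivalences) with the combinatorics of polygon triangulations. This dictionary between lax associativity and the $2$-Segal conditions is where the real work lies.
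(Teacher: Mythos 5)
Your first half is exactly the paper's own argument: the paper invokes Li-Bland's functoriality theorem (Theorem \ref{LiBlandthm}, rather than \cite{RuneSpans}) to turn the right Kan extension $\cX\colon\csSetfop\to\cC$ of $\cX_\bullet$ into a symmetric monoidal functor $\csmtSpan{\csSetfop}\to\csmtSpan{\cC}$, and defines the lax algebra as the composite $\alpha_\cX=\cX\circ\alpha$; the citation difference is immaterial. Your dictionary for the second half --- the structural $2$-cells are spans of spans whose non-trivial leg is the inclusion of a spine-glued union of simplices into $\Simplex{n}$, which $\cX$ converts into the comparison map $\cX_n\to\cX(\text{subdivision})$, and such a $2$-cell is invertible precisely when its legs are equivalences (\cite{RuneSpans} 6.2) --- is also how Section \ref{Sec:SegAssoc} proceeds, via Lemmas \ref{sublemone} and \ref{sublemtwo}.

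The genuine gap is unitality. In this paper ``$2$-Segal'' means what \cite{DK12} calls \emph{unital} $2$-Segal (a decomposition space in \cite{KockI}): Definition \ref{2SegDefn} demands that, in addition to the polygonal squares of Eq.~\ref{SegPO}, the degeneracy squares of Eq.~\ref{SegUn} become pullbacks. Correspondingly, algebra objects here are unital, so morphisms in $\Alg$ may have empty preimages, and the structural $2$-cells attached to such morphisms (the unit constraints) must also be inverted; these cells are \emph{not} inclusions of triangulated polygons, and your dictionary never mentions them. Your fallback claim --- that ``the full $2$-Segal condition is generated by the $n=3$ conditions'' by peeling ears and $2$-out-of-$3$ --- fails on two counts. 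First, the single-diagonal conditions for distinct $n$ are independent: no composite of maps built from $\cX_{\leq 3}$ can produce the map $\cX_4\to\cX_3\times_{\cX_1}\cX_2$, so peeling can reduce a fine subdivision to single-diagonal ones at each level but cannot eliminate the level. Second, within the literature you invoke, the degeneracy conditions do not follow from the polygonal ones at all; they are an independent axiom in both \cite{DK12} and \cite{KockI}. The paper closes both points concretely: Lemma \ref{sublemone} exhibits explicit composable pairs in $\Alg$ --- Eq.~\ref{SegPair} \emph{and} Eq.~\ref{UnitPair}, the latter with an empty fibre --- whose structure cells are exactly the squares of Eqs.~\ref{SegPO} and \ref{SegUn}; and Lemma \ref{sublemtwo} proves the converse using the G\'alvez-Carrillo--Kock--Tonks characterisation (all active-inert pushouts in $\Delta$ go to pullbacks, which encodes the degeneracy conditions as well), factoring each structure map as a chain of such pushouts in Eq.~\ref{KappaComp}. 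That factorisation is the rigorous form of your ``peeling'', but it consumes the full unital $2$-Segal hypothesis at every level, not the $n=3$ case alone.
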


To our knowledge there are two previous constructions of the above algebra associated to a $2$-Segal object by different techniques and at differing levels of generality. The first is due to Dyckerhoff--Kapranov, who associate to each injectively fibrant $2$-Segal object $\cX$ in a combinatorial simplicial model category a monad on the $\inftwo$-category of bispans which makes $\cX_1$ an algebra when $\cX_0\simeq \ast$ (\cite{DK12} 11.1). G\'alvez-Carrillo--Kock--Tonks showed the space of $1$-simplices in a $2$-Segal space carries an algebra structure using a novel equivalence between simplicial spaces and certain monoidal functors (\cite{KockI} 7.4).

By first constructing the lax algebra for the initial simplicial object $\Simplex{\bullet}$ we are able to work at the highest level of generality possible, that of $2$-Segal objects in an $\infty$-category having finite limits. Furthermore, this paper is the first in a series of papers which build new examples of higher categorical bialgebras using $2$-Segal spaces \cite{penney2017bialg, penney2017bimon}. The construction given here provides the groundwork for these later papers.

\paragraph{Outline.} The paper begins in Section \ref{Sec:LaxAlg} with the necessary background for defining the notion of a {\em lax algebra object} in a symmetric monoidal $\inftwo$-category. As the informal description of a lax algebra given above explicitly makes use of non-invertible $2$-morphisms it should come as no surprise that the definition makes use of ideas which are inherently $\inftwo$-categorical. Specifically, one needs {\em lax functors} between $\inftwo$-categories. In the particular model of $\inftwo$-categories that we use in this paper lax functors are defined in terms of the {\em unstraightening construction}, the $\infty$-categorical generalisation of the ordinary Grothendieck construction. A lax algebra object in a symmetric monoidal $\inftwo$-category is then defined to be a symmetric monoidal lax functor out of the category $\csmAlg$ which corepresents algebra objects. In Section \ref{Sec:inftwo} we review our chosen model of $\inftwo$-categories, in Section \ref{Sec:laxfun} we define lax functors in terms of the unstraightening construction and in Section \ref{Sec:algprop} we introduce the category $\csmAlg$.

As indicated in Theorems \ref{SimplexSumm} and \ref{MainSummAlg} we will be constructing lax algebra objects in symmetric monoidal $\inftwo$-categories of bispans, which are the subjects of Section \ref{Sec:bispans}. One of the technical hurdles that one must surmount when working with lax functors of $\inftwo$-categories is the difficulty of providing an explicit description of the unstraightening construction. As such, we devote a considerable part of this section to the unstraightening of the $\inftwo$-category of bispans, rendering it into a form which is adequate for our purposes. We begin, in Section \ref{Sec:twisted}, with a brief discussion on the twisted arrow construction, a construction which appears throughout this work. Next we review Haugseng's original definition of the $\inftwo$-category of bispans in Section \ref{BispanConst}. Finally, Sections \ref{StrictBispan} and \ref{UnstrSpan} contain technical material culminating in a description of the unstraightening of the $\inftwo$-category of bispans.

While the previous two sections have been essentially preparatory, Section \ref{Sec:multialg} presents the main results of the paper. We begin in Section \ref{Sec:algcomb} by proving Theorem \ref{SimplexSumm}, that is, by giving an explicit construction of a lax algebra structure on $\Simplex{1}$. The first statement in Theorem \ref{MainSummAlg} is proven in Section \ref{Sec:intospaces} as a formal consequence of the construction in the previous section. Finally, in Section \ref{Sec:SegAssoc} we prove the second statement of Theorem \ref{MainSummAlg} connecting the $2$-Segal condition to the associativity of the lax algebras constructed in Section \ref{Sec:intospaces}.

\paragraph{Acknowledgements.} We would like to thank Tobias Dyckerhoff for a number of insightful conversations on the subject of $2$-Segal spaces. We also thank Rune Haugseng for answering some of our questions regarding the unstraightening construction and the $\inftwo$-category of bispans. Finally, we are indebted to Joachim Kock for his thorough feedback on an earlier draft of this paper.

\paragraph{Notational conventions and simplicial preliminaries.} Throughout this paper we make extensive use of the theory of $\infty$-categories as developed by Joyal \cite{JoyalJPAA, JoyalUnpub} and Lurie \cite{HTT}. In particular, by an $\infty$-category we shall always mean a {\em quasi-category}, a simplicial set having fillers for all inner horns.
 
 Ordinary, simplicially-enriched or model categories will always be denoted by either Greek letters (e.g. $\Delta$) or in ordinary font (e.g. $C$). 
 $\infty$-categories will either be blackboard Greek letters (e.g. $\Simp$) or have the first character in calligraphic font (e.g. $\cC$). 
 
 For ordinary categories $C$ and $D$, $\Fun(C,D)$ is the category of functors and natural transformations and $\Map(C,D)$ is the groupoid of functors and natural isomorphisms. Analogously, for $\infty$-categories $\cC$ and $\cD$, $\cFun(\cC,\cD)$ is the $\infty$-category of functors and $\cMap(\cC,\cD)$ is the largest Kan complex inside $\cFun(\cC,\cD)$. In particular, the ($\infty$-)category of {\em $k$-fold simplicial objects} in an ordinary category $C$ or $\infty$-category $\cC$ are 
 \begin{equation*}
  C_{\Delta^k} = \Fun\left(\left(\Delta^\op\right)^{\times k}, C\right) \ {\rm and} \ \cC_{\Simp^k} = \cFun\left(\left(\Simpop\right)^{\times k}, \cC\right),
 \end{equation*}
where $\Delta$ is the category of non-empty linearly ordered finite sets and the $\infty$-category $\Simp$ is the nerve of $\Delta$.
 
Let $\qCat$ denote the simplicially-enriched category of quasi-categories with mapping spaces given by $\cMap$. The {\em $\infty$-category of $\infty$-categories}, $\Catoo$, is the coherent nerve of $\qCat$ (\cite{HTT} 3.0.0.1). The full subcategory of $\qCat$ of Kan complexes is denoted $\Kan$, and its coherent nerve, $\cS$, is the $\infty$-category of spaces. The inclusion $\xymatrixcolsep{.8pc}\xymatrix{\cS \ar@{^{(}->}[r] &  \Catoo}$ admits a right adjoint (\cite{HTT} 1.2.5.3, 5.2.4.5) denoted
\begin{equation*}
 (-)^\simeq: \Catoo \to \cS.
\end{equation*}

We denote by $\CSS$ the category of bisimplicial sets carrying the Rezk model structure \cite{RezkCSS}. This is a simplicial model category whose fibrant-cofibrant objects are the {\em complete Segal spaces}. The coherent nerve of the full subcategory of complete Segal spaces, denoted $\cCSS$, is canonically equivalent to $\Catoo$ (\cite{JoyalTierney} 4.11).

The category of finite sets is denoted by $\Fin$ and every object is isomorphic to one of the form
 \begin{equation*}
 \underline{n}=\{1,\cdots, n\} \in \Fin.
 \end{equation*}
 Similarly, $\Finpt$ is the category of pointed finite sets. Objects of $\Finpt$ are denoted $X_*$, with $\ast$ the basepoint and $X$ the complement of the basepoint. The $\infty$-categories $\cFin$ and $\cFinpt$  are, respectively, the nerves of $\Fin$ and $\Finpt$.
 
 We adopt the topologists convention for the category $\Delta$ of non-empty, linearly ordered finite sets in that we label its objects by 
 \begin{equation*}
 [n] = \{0<\cdots<n\}\in \Delta.
 \end{equation*}
 The {\em active} and {\em inert} morphisms form a factorization system on $\Delta$. The former are those morphisms which preserve the bottom and top elements while the latter are the inclusions of subintervals. Active morphisms will be denoted by arrows of the form ($\actmorR$) while inert morphisms by arrows of the form ($\inmorR$). Their corresponding wide subcategories are denoted, respectively, by $\Deltaac$ and $\Deltain$. Every morphism in $\Delta$ can be uniquely factored as an active followed by an inert morphism. The $\infty$-categories $\Simpac$ and $\Simpin$ are, respectively, the nerves of $\Deltaac$ and $\Deltain$.
 
 \begin{remark}
  The active-inert factorisation of morphisms in $\Delta$ is a particular example of the general notion of generic-free factorisations in the theory of monads as developed by Weber \cite{Weber} and Berger-Mellies-Weber \cite{BergerEtAl}. Following Lurie \cite{LurieHA} and Haugseng \cite{RuneMorita}, we adopt the former terminology as we feel that it is more descriptive.
 \end{remark}
 
 The category $\Delta$ is a full subcategory of $\Delta_+$, the category of (possibly empty) linearly ordered finite sets, the objects of which are
 \begin{equation*}
  \langle n \rangle = \{1 < \cdots < n\} \in \Delta_+.
 \end{equation*}
 The category $\nabla$ (\cite{Kockres} 8)\footnote{Note that our category $\nabla$ is the opposite of the one defined in \cite{Kockres}} has the same objects as $\Delta_+$ and morphisms given by spans of the form 
 \begin{equation}
 \label{nabladefn}
 \xymatrixcolsep{1.1pc} \xymatrixrowsep{.8pc} \xymatrix{ & \ar[ld] \langle k \rangle \ar@{>->}[rd] & \\
 \langle n \rangle &  & \langle m \rangle\, .}
 \end{equation}
 The category of {\em $k$-fold nabla objects} in a category $C$ is
 \begin{equation*}
 C_{\nabla^k} = \Fun \left((\nabla^\op)^{\times k} , C \right).
 \end{equation*}

 There is a bijective on objects and full functor $\fG: \Delta \to \nabla$ which restricts to isomorphisms $\Deltaac \simeq \Delta_+^\op$ and $\Deltain^{\geq 1} \simeq (\Delta_+)_\mathrm{in}^{\geq 1}$(\cite{Kockres} 8.2). Restriction along $\fG$ induces a fully faithful functor
 \begin{equation}
  \label{Augdef}
  \fG^*: C_{\nabla^k} \to C_{\Delta^k}.
 \end{equation}

 The category $\Deltaac$ has a canonical monoidal structure 
 \begin{equation*}
 [n] \vee [m] = [n+m]
\end{equation*}
having unit $[0]$. The functor $\fG$ restricts to a monoidal equivalence if we endow $\Delta_+$ with the monoidal structure
\begin{equation}
\label{AugMon}
 \langle n \rangle + \langle m \rangle = \langle n+m \rangle
\end{equation}
having unit $\langle 0 \rangle$. 

Finally, throughout this paper algebra objects are assumed to be unital.

\section{Lax algebras in symmetric monoidal \texorpdfstring{$\inftwo$}{(oo,2)}-categories }
\label{Sec:LaxAlg}
In this section we cover the necessary background for defining the notion of a {\em lax algebra object} in a symmetric monoidal $\inftwo$-category. We begin in Section \ref{Sec:inftwo} with a review our chosen model of $\inftwo$-categories. In Section \ref{Sec:laxfun} we define lax functors in terms of the unstraightening construction. Finally, in Section \ref{Sec:algprop} we introduce the category $\csmAlg$ which corepresents algebras.

\subsection{Review of symmetric monoidal \texorpdfstring{$\inftwo$}{(oo,2)}-categories}
\label{Sec:inftwo}
The model of $\inftwo$-categories that we use in this work is the one originally introduced by Lurie in \cite{Lurieinftwo}. 

\begin{defn}
 An {\em $\inftwo$-category} is a simplicial $\infty$-category $\cB \in (\Catoo)_\Simp$ such that
 \begin{enumerate}
  \item $\cB$ is a Segal object, that is, for each $n\geq 2$ the functor $\cB_n \to \cB_1 \times_{\cB_0}\cdots \times_{\cB_0} \cB_1$ is an equivalence;
  \item The $\infty$-category $\cB_0$ is a space, that is, $\cB_0 \in \cS \subset \Catoo$; and,
  \item The Segal space $\xymatrixcolsep{.8pc}\xymatrix{ \Simpop \ar[r]^-\cB & \Catoo \ar[r]^-{(-)^\simeq} & \cS}$ is complete.
 \end{enumerate}
\end{defn}
The $\infty$-category of $\inftwo$-categories is a full subcategory $\xymatrixcolsep{.8pc}\xymatrix{ \Bicatoo \ar@{^{(}->}[r] & (\Catoo)_\Simp}$. In other words, functors between $\inftwo$-categories are simply natural transformations.
 
 \begin{remark}
A widely used model of $\inftwo$-categories in the literature are $2$-fold complete Segal spaces as introduced by Barwick \cite{BarwickThesis}. Barwick's model is recovered from the one used in this work by presenting $\Catoo$ as $\cCSS$.  
 \end{remark}

Let $\op: \Delta \to \Delta$ denote the automorphism sending $[n]$ to $[n]^\op$.
\begin{defn}
 For an $\inftwo$-category $\cB$ its {\em opposite $\inftwo$-category} is the composite
 \begin{equation*}
  \cB^\op: \xymatrixcolsep{1.5pc} \xymatrix{\Simpop \ar[r]^-\op & \Simpop \ar[r]^-\cB & \Catoo}.
 \end{equation*}
\end{defn}

 There is a universal way to extract from a Segal object $\cB$ in $\Catoo$ a new Segal object $\cU \cB$ whose $\infty$-category of $0$-simplices is a space: let $\cSeg \Catoo$ denote the full subcategory of $(\Catoo)_\Simp$ spanned by Segal objects and let $\cSego \Catoo$ be the full subcategory of the former spanned by those objects satisfying Condition $2$ above. Then the inclusion $\xymatrixcolsep{.8pc}\xymatrix{\cSego \Catoo \ar@{^{(}->}[r] & \cSeg \Catoo}$ admits a right adjoint (\cite{RuneSpans} 2.13)
 \begin{equation*}
 \cU:\cSeg \Catoo \to \cSego \Catoo. 
 \end{equation*}
 Explicitly, given $\cB \in \cSeg \Catoo$, we have $\cU \cB_0 = \cB_0^{\simeq}$ and for each $n\geq 1$ a pullback square
 \begin{equation}
 \label{LocPB}
  \xymatrixrowsep{.9pc}\xymatrixcolsep{.9pc}\xymatrix{\cU \cB_n \ar[r] \ar[d] & \cB_n \ar[d] \\
  \left(\cB_0^\simeq\right)^{n+1} \ar@{^{(}->}[r] & \left( \cB_0\right)^{n+1} }
 \end{equation}

There are a number of ways in the literature to define symmetric monoidal $\inftwo$-categories. We follow Lurie (\cite{LurieHA} 2.0.0.7) in choosing the one generalising Segal's notion of a special $\Gamma$-space \cite{SegalGamma}. 
\begin{defn}
 A {\em symmetric monoidal $\inftwo$-category} is a functor 
 \begin{equation*}
 \cB^\otimes :\cFinDel\to \Catoo
 \end{equation*}
 such that
 \begin{enumerate}
  \item For each $S_* \in \cFinpt$, the simplicial $\infty$-category $\cB^\otimes(S_*,\bullet)$ is an $\inftwo$-category.
  \item For each $[n] \in \Simpop$ and $S_* \in \cFinpt$, the map $\cB^\otimes(S_*,[n]) \to \prod_{s \in S} \cB^\otimes(\{s\}_*, [n])$ is an equivalence.
 \end{enumerate}
\end{defn}
 The $\infty$-category of symmetric monoidal $\inftwo$-categories is a full subcategory 
 \begin{equation*}
 \xymatrixcolsep{.8pc}\xymatrix{ \Bicatoo^\otimes \ar@{^{(}->}[r] & \cFun\left(\cFinDel, \Catoo \right)}.
 \end{equation*}
 In other words, a symmetric monoidal functor between symmetric monoidal $\inftwo$-categories is simply a natural transformation, that is, a morphism in $\cFun\left(\cFinDel, \Catoo \right)$. 

\subsection{Symmetric monoidal lax functors via the unstraightening construction}
\label{Sec:laxfun}

Recall that a lax functor between ordinary $2$-categories $L: A \rightsquigarrow B$ differs from a functor in that it no longer respects identity arrows and composition of $1$-morphisms \cite{LackComp}. Instead, for each object $a$ of $A$ and each pair of composable morphisms $f$ and $g$, one has (not necessarily invertible) $2$-morphisms $\id_{L(a)} \Rightarrow L(\id_a)$ and $L(g) \circ L(f) \Rightarrow L(g \circ f)$ in $B$ witnessing the lax preservation of unitality and composition. Furthermore, these $2$-morphisms must satisfy associativity and unitality coherence equations.

The notion of a lax functor between $\inftwo$-categories requires the use of the theory of {\em cocartesian fibrations} of $\infty$-categories and the {\em unstraightening construction} as developed by Lurie (\cite{HTT} 2.4). Recall that the unstraightening construction defines an equivalence between functors $\cC \to \Catoo$ and cocartesian fibrations over $\cC$ (\cite{HTT} 3.2.0.1),
\begin{equation*}
\Un: \xymatrixcolsep{.8pc}\xymatrix{\cFun(\cC, \Catoo) \ar[r]^-\sim & {\cal C}{\rm ocart}_{/\cC}}.
\end{equation*}
Under this equivalence a natural transformation $\eta:F \Rightarrow G$ is sent to a morphism
\begin{equation*}
 \xymatrixrowsep{.8pc}\xymatrixcolsep{.9pc}\xymatrix{\Un(F)\ar[rr]^{\Un(\eta)} \ar[rd] & & \Un(G) \ar[ld] \\
  & \cC & }
\end{equation*}
such that $\Un(\eta)$ sends cocartesian morphisms in $\Un(F)$ to cocartesian morphisms in $\Un(G)$. The unstraightening construction is natural in $\cC$ in the sense that from a composite
\begin{equation*}
 \xymatrixcolsep{1.4pc} \xymatrix{ \cD \ar[r]^-G & \cC \ar[r]^-F & \Catoo}
\end{equation*}
one has a pullback diagram (\cite{GHN} A.31)
\begin{equation*}
 \xymatrixrowsep{.9pc} \xymatrixcolsep{1.4pc} \xymatrix{ \Un(FG) \ar[d] \ar[r] & \Un(F) \ar[d] \\
 \cD \ar[r]_-{G} & \cC\,. }
\end{equation*}

Taking $\cC=\Simpop$ one can unstraighten an $\inftwo$-category, and functors become morphisms of cocartesian fibrations over $\Simpop$ which preserve cocartesian morphisms. Lax functors will still be morphisms of fibrations but will preserve fewer cocartesian morphisms. 
\begin{defn}
 A {\em lax functor} $L:\cA \rightsquigarrow \cB$ between $\inftwo$-categories $\cA, \, \cB: \Simpop \to \Catoo$ is a morphism
 \begin{equation*}
  \xymatrixrowsep{.8pc}\xymatrixcolsep{.9pc}\xymatrix{\Un\left(\cA\right)\ar[rr]^{L} \ar[rd] & & \Un\left(\cB\right) \ar[ld] \\
  & \Simpop & }
 \end{equation*}
 such that $L$ sends cocartesian lifts of morphisms in $(\Simpin)^{\rm op}$, the subcategory of inert morphisms, to cocartesian morphisms.
\end{defn}
\begin{remark}
We are not certain as to the exact history of this approach to defining lax functors between $\inftwo$-categories. We first learned it from Dyckerhoff--Kapranov (\cite{DK12} 9.2.8) and Lurie's definition of a morphism of $\infty$-operads (\cite{HTT} 2.1.2.7) is qualitatively similar. We believe an analogous statement must be known for lax functors between $2$-categories but do not know any references.
 \end{remark}

\begin{remark}
 \label{laxop}
 Since $\op:\Delta \to \Delta$ preserves inert morphisms, the naturality of the unstraightening construction implies that from a lax functor $L: \cA \rightsquigarrow \cB$ one gets a lax functor $L^\op: \cA^\op \rightsquigarrow \cB^\op$.
\end{remark}

It is straightforward to extend this notion to define symmetric monoidal lax functors, that is, functors between symmetric monoidal $\inftwo$-categories which preserve the symmetric monoidal structure but only laxly preserve composition, as follows.
\begin{defn}
\label{DefnsmLax}
 A {\em symmetric monoidal lax functor} $L: \cA^\otimes \rightsquigarrow \cB^\otimes$ between symmetric monoidal $\inftwo$-categories $\cA^\otimes, \cB^\otimes: \cFinDel \to \Catoo$ is a morphism
 \begin{equation*}
  \xymatrixrowsep{.8pc}\xymatrixcolsep{.9pc}\xymatrix{\Un\left(\cA^\otimes\right)\ar[rr]^{L} \ar[rd] & & \Un\left(\cB^\otimes\right) \ar[ld] \\
  & \cFinDel & }
 \end{equation*}
 such that $L$ sends cocartesian lifts of morphisms in $\cFinpt\times(\Simpin)^{\rm op}$ to cocartesian morphisms.
\end{defn}

\subsection{Corepresenting algebra objects}
\label{Sec:algprop}
To specify an algebra object in a symmetric monoidal $\inftwo$-category one must provide not just an associative and unital binary operation, but a coherent choice of higher associativity and unitality data. To package together all of this data we will make use of a category originally introduced by Pirashvili \cite{Pirash02}.

Denote by $\Alg$ the category having as objects finite sets. A morphism in $\Alg$ is a function $p:X \to Y$ along with a choice of linear ordering of the (possibly empty) preimages $p^{-1}(y)$ for each $y \in Y$. The composition of a pair of composable morphisms
\begin{equation*}
 \xymatrix{ X_0 \ar[r]^-{p_1} & X_1 \ar[r]^-{p_2} & X_2}
\end{equation*}
is the composition of the underlying functions, with linear ordering on $(p_2p_1)^{-1}(x_2)$ given by
\begin{equation*}
 (p_2p_1)^{-1}(x_2) = \sum_{x_1 \in p_2^{-1}(x_2)} p_1^{-1}(x_1),
\end{equation*}
where the sum denotes the monoidal structure on $\Delta_+$, the category of finite linear orders, introduced in Eq.~\ref{AugMon}. The disjoint union endows $\Alg$ with a symmetric monoidal structure.

The category $\Alg$ corepresents algebra objects in the sense that, for a symmetric monoidal category $C$, symmetric monoidal functors $\Alg\to C$ are the same as algebra objects in $C$. This is because $\Alg$ is the {\em category of operators} \cite{MayOps} for the $\Sigma$-operad of associative algebras, that is, ${\rm Hom}_\Alg(\underline{n},\underline{1}) \simeq \Sigma_n$ and all morphisms in $\Alg$ are, up to precomposition with an isomorphism, disjoint unions of these. 

\begin{remark}
 One only needs a monoidal structure on a category to define algebra objects in it. The simpler category $\Delta_+$ corepresents algebra objects in {\em monoidal} categories and so one can equivalently define an algebra object in a symmetric monoidal category $C$ to be a {\em monoidal} functor from $\Delta_+$ to $C$. However, one needs at least a {\em braiding} on a monoidal category to define bialgebra objects in it, and in all the examples which concern us the braiding is in fact symmetric. As this paper lays the foundations for our work on higher categorical bialgebras \cite{penney2017bialg, penney2017bimon} it is therefore crucial that we make use of $\Alg$ rather than $\Delta_+$.
\end{remark}

To corepresent algebra objects in a symmetric monoidal $\inftwo$-category it suffices to present $\Alg$ in the model of symmetric monoidal $\inftwo$-categories that we use in this paper. Observe that for $\cC$ an $\infty$-category and $F:\cC_1 \to \cC_2$ a fully faithful functor, both the right and left Kan extensions,
 \begin{equation*}
  F_*,F_!: \cFun(\cC_1, \cC) \to \cFun(\cC_2, \cC),
 \end{equation*}
are fully faithful should they exist.  
\begin{defn}
\label{RKERem}
 Let $\cC$ be an $\infty$-category and $F:\cC_1 \to \cC_2$ be a fully faithful functor. We call functors in the image of $F_*$ {\em cartesian} and those in the image of $F_!$ {\em cocartesian}. We denote these full subcategories, respectively, by $\cFun^{\rm cart}(\cC_2, \cC)$ and $\cFun^{\rm cocart}(\cC_2,\cC)$, with the functor $F$ to be understood implicitly from the context.
\end{defn}
\begin{remark}
We follow Haugseng's terminology \cite{RuneSpans} as our definition of a (co)cartesian functor is a generalisation of the one given there. In Section \ref{Sec:laxfun} we discussed cocartesian morphisms and cocartesian fibrations. These are distinct notions from the one being introduced now, but as these terms are used in different contexts there is little fear of confusion.
\end{remark}

 For each set $S$, denote by $\Cart S$ the poset of subsets of $S$ ordered by inclusion. These assemble into a functor $\Cart -: \Finpt^{\rm op} \to \Cat$ by declaring the image of a pointed map $f:S_* \to T_*$ to be
 \begin{equation*}
  \Cart f: \Cart T \to \Cart S, \quad U \mapsto f^{-1}(U).
 \end{equation*}
For each $S_* \in \Finpt$ there is a full subcategory $\xymatrixcolsep{1.5pc}\xymatrix{\Sing S \ar@{^{(}->}[r] & \Cart S}$ consisting of the singleton subsets. The $\infty$-categories $\cSing S$ and $\cCart S$ are, respectively, the nerves of $\Sing S$ and $\Cart S$.

\begin{exam}
A functor $\cCartop {\{1,2\}}\to \cC$ is a diagram,
\begin{equation*}
 \xymatrixrowsep{.8pc}\xymatrix{ & c_{\{1,2\}} \ar[rd] \ar[ld] & \\
 c_{\{1\}} \ar[rd] & &\ar[ld] c_{\{2\}} \\
  & c_{\emptyset} & }
\end{equation*}
Such a diagram is cartesian if it presents $c_{\{1,2\}}$ as the product of $c_{\{1\}}$ and $c_{\{2\}}$ and $c_{\emptyset}$ is terminal. Similarly, a cartesian functor $\cCartop S\to \cC$ encodes a coherent choice of products for a collection of objects of $\cC$ labelled by the elements of $S$.
\end{exam}

Recall that the symmetric monoidal structure on an $\infty$-category $\cD$ having finite coproducts is given by the functor
\begin{equation*}
 \cD^\amalg: \cFinpt \to \Catoo, \quad S_* \mapsto \cFun^{\rm cocart}\left( \cCart S, \cD \right).
\end{equation*}
While the disjoint union is {\em not} the coproduct in $\Alg$, it is the case that given morphisms $p_i:X_i \to Y_i$ there is a unique morphism making the following diagram commute
\begin{equation*}
 \xymatrixrowsep{.8pc}\xymatrixcolsep{.9pc}\xymatrix{X_1 \ar[rrr]^{p_1} \ar@{_{(}->}[rd] & &  & Y_1 \ar@{^{(}->}[ld]\\
  & X_1 \coprod X_2 \ar@{-->}[r] & Y_1 \coprod Y_2 & \\
  X_2 \ar[rrr]^{p_2} \ar@{^{(}->}[ru] & &  & Y_2 \ar@{_{(}->}[lu]} 
\end{equation*}
Define a functor $F:\Cart S \times [n] \to \Alg$ to be {\em cocartesian} if for each $i \in [n]$, the composite
\begin{equation*}
 \xymatrix{\Cart S \ar[r]^{F(-,i)} & \Alg \ar[r]^{\rm forget} & \Fin}
\end{equation*}
is cocartesian. One has by the above that the functor 
\begin{equation*}
\csmAlg: \FinDel \to \qCat^0
\end{equation*}
sending $(S_*,[n])$ to the set of cocartesian functors $\Cart S \times [n] \to \Alg$ is a symmetric monoidal $\inftwo$-category.

We can now define the algebraic structures which are the focus of the remainder of this paper.
\begin{defn}
\label{DefnLaxAlg} \label{DefnLaxMAlg}
Let $\cB^\otimes$ be a symmetric monoidal $\inftwo$-category.
\begin{itemize}
 \item An {\em algebra object} in $\cB^\otimes$ is a symmetric monoidal functor $\csmAlg \to \cB^\otimes$.
  \item A {\em lax algebra object} in $\cB^\otimes$ is a symmetric monoidal lax functor $\csmAlg \rightsquigarrow \cB^\otimes$.
\end{itemize}
\end{defn}
\begin{remark}
 \label{CoalgRem}
 One can readily dualise the preceding discussion to define coalgebra objects in symmetric monoidal $\inftwo$-categories. Namely, a {\em (lax) coalgebra object} in $\cB^\otimes$ is a symmetric monoidal (lax) functor from $\csmCoalg := (\csmAlg)^\op$ to $\cB^\otimes$.
\end{remark}

It is worth taking a moment to informally discuss the exact nature of a lax algebra object $A$, as it is slightly more subtle than one might initially expect. For each string of composable morphisms in $\Alg$,
\begin{equation*}
 \xymatrixcolsep{1.5pc} \xymatrix{ \underline{n}_0 \ar[r]^-{p_1} & \underline{n}_1 \ar[r]^-{p_2} &  \cdots \ar[r]^-{p_k} & \underline{n}_k}
\end{equation*}
one has $2$-morphisms
\begin{equation*}
 \xymatrixrowsep{.9pc} \xymatrixcolsep{1.3pc} \xymatrix{ & A^{\otimes n_1} \ar[r]^-{p_2} & \cdots \ar[r]^-{p_{k-1}} \ar@{=>}[d] & A^{\otimes n_{k-1}} \ar[rd]^-{p_k} & \\
 A^{\otimes n_0}\ar[ru]^-{p_1} \ar[rrrr]_-{p_k \circ \cdots \circ p_1} & & & & A^{\otimes n_k} }
\end{equation*}
which are compatible with disjoint union and the composition of morphisms in $\Alg$. In particular, the witness to the lax associativity of the product on $A$ is a diagram
\begin{equation*}
 \xymatrixrowsep{.55pc}\xymatrixcolsep{.55pc} \xymatrix{ A^{\otimes 3} \ar[rr]^-{\mu \otimes \id} \ar[dd]_-{\id \otimes \mu} \ar[rrdd] & & A^{\otimes 2} \ar[dd]^-{\mu} \ar@{=>}[ld] \\
   & &  \\
   A^{\otimes 2} \ar@{=>}[ru] \ar[rr]_-{\mu} & & A\, .}
\end{equation*}

\section{The \texorpdfstring{$\inftwo$}{(oo,2)}-category of bispans}
\label{Sec:bispans}

The construction which associates the $\infty$-category $\cSpan \cC$ to an $\infty$-category $\cC$ having finite limits can be iterated to form $(\infty,n)$-categories for each $n$. Informally, a $2$-morphism between spans is a `span of spans', that is a diagram of the form
\begin{equation*}
 \xymatrixrowsep{.8pc}\xymatrix{ & d \ar[rd] \ar[ld] & \\
 c & e \ar[u] \ar[d] \ar[r] \ar[l]& c' \\
 & d' \ar[ru] \ar[lu] & }
\end{equation*}
while a $3$-morphism is a `span of spans of spans', and so on. The cartesian product in $\cC$ endows these $(\infty,n)$-categories with a symmetric monoidal structure. The rigorous construction of these symmetric monoidal $(\infty,n)$-categories has been carried out by Haugseng \cite{RuneSpans}. 

Section \ref{Sec:twisted} is a brief discussion on the twisted arrow construction, a construction which appears throughout this work. In Section \ref{BispanConst} we review Haugseng's construction in the case that concerns us, namely the construction of the symmetric monoidal $\inftwo$-category $\csmtSpan \cC$ of {\em bispans} in $\cC$. In Section \ref{StrictBispan} we prove that $\csmtSpan \cC$ is {\em semistrict}, a technical condition which simplifies the description of lax functors. Finally, in Section \ref{UnstrSpan} we determine explicitly the unstraightening of $\csmtSpan{N(C)}$ for $C$ an ordinary category having finite limits.

\subsection{The twisted arrow category}
\label{Sec:twisted}

The {\em twisted arrow category} (\cite{MacLane} IX.6.3), $\Pyr D$, of an ordinary category $D$ will be a recurring character in this work. It will first appear in the definition of the $\inftwo$-category of bispans in Section \ref{BispanConst} and its subsequent reappearances will be tied to various constructions involving bispans. 

For an ordinary category $D$, let $\Pyr D$ be the category having as objects arrows $f:d \to d'$ in $D$, and morphisms from $f_1$ to $f_2$ 
\begin{equation*}
 \xymatrixrowsep{.9pc} \xymatrix{d_1 \ar[r]^-{f_1} \ar[d] & d_1' \\
 d_2 \ar[r]_-{f_2} & d_2' \ar[u] }
\end{equation*}
Remembering only the source and target of an object of $\Pyr D$ defines a forgetful functor $\Pyr D \to D \times D^\op$. Furthermore, the twisted arrow categories assemble into a functor $\Pyr {-}:\Cat \to \Cat$.

\begin{remark}
 There are two equally canonical conventions for the definition of the twisted arrow category, the second of which has morphisms given by diagrams
 \begin{equation*}
 \xymatrixrowsep{.9pc} \xymatrix{d_1 \ar[r]^-{f_1}  & d_1' \ar[d] \\
 d_2 \ar[r]_-{f_2}\ar[u]  & d_2'  }
\end{equation*}
We follow the convention used by Haugseng \cite{RuneSpans}, while the second convention is used by Barwick \cite{BarwickSpan} and Lurie (\cite{LurieHA} 5.2.1).
\end{remark}

A number of constructions in later sections involve writing explicit functors of the form $\Pyr D \to C$ for $C$ a category having finite limits. It turns out that such functors can be equivalently described as {\em normal oplax functors} $D \nrightarrow \sp C$, where $\sp C$ is a bicategory which we shall describe shortly. For our purposes this latter description will often be more convenient.

Given a category $C$ having finite limits one can define a bicategory $\sp C$ \cite{Benabou} having the same objects as $C$, $1$-morphisms given by spans, and $2$-morphisms given by diagrams
\begin{equation*}
 \xymatrixrowsep{.7pc} \xymatrix{ & d \ar[rd] \ar[ld] \ar[dd] & \\
 c & & c' \, .\\
 & d' \ar[ru] \ar[lu] & }
\end{equation*}
Horizontal composition is given by pullbacks, chosen once and for all. The identity $1$-morphism for an object $c$ is the span $\xymatrixcolsep{.7pc} \xymatrix{ c \ar@{=}[r] & c \ar@{=}[r] & c}$.
\begin{remark}
Note that this bicategory is similar to, but distinct from, the $\inftwo$-category of bispans in $C$ that we shall introduce in Section \ref{BispanConst}. They have different $2$-morphisms and, unlike the $\inftwo$-category of bispans in $C$, $\sp C$ has no $k$-morphisms for $k>2$.
\end{remark}
A {\em normal oplax functor} $F: D \nrightarrow \sp C$ consists of, for each object $d \in D$ an object $F(d) \in C$, for each morphism $f:c \to d$ in $D$ a span
\begin{equation*}
 \xymatrixrowsep{.8pc} \xymatrix{ & \ar[ld] F(f) \ar[rd] &  \\
 F(c) & & F(d) }
\end{equation*}
and for each pair of composable morphisms $\xymatrixcolsep{.7pc} \xymatrix{ c \ar[r]^-f & d \ar[r]^-g & e}$ a diagram
\begin{equation*}
 \xymatrixrowsep{.7pc} \xymatrix{ & F(g\circ f) \ar[rd] \ar[ld] \ar[dd]^-{\Phi_{g,f}} & \\
 F(c) & & F(e) \, .\\
 & F(g)\times_{F(d)} F(f) \ar[ru] \ar[lu] & }
\end{equation*}
For each object $d \in D$ it must be that $F(\id_d)=\id_{F(d)}$, for each morphism $\xymatrixcolsep{.7pc} \xymatrix{c \ar[r]^-f & d}$ it must be that $\Phi_{\id_d, f} = \id_{F(f)}=\Phi_{f, \id_c}$, and, suppressing associator isomorphisms, for each string of composable morphisms $\xymatrixcolsep{.7pc} \xymatrix{ b \ar[r]^-f & c \ar[r]^-g & d \ar[r]^-h & e}$,
\begin{equation}
\label{OplaxAssoc}
 \left(\id_{F(h)} \times_{F(d)} \Phi_{g,f}\right) \circ \Phi_{h,gf} = \left( \Phi_{h,g} \times_{F(c)} \id_{F(f)}\right) \circ \Phi_{hg,f},
\end{equation}
as morphisms $F(hgf) \to F(h) \times_{F(d)} F(g) \times_{F(c)} F(f)$. 

\begin{thm}[\cite{Errington} 3.4.1]
\label{ErrOplax}
 For any category $D$ and any category $C$ having finite limits there is a natural bijection
 \begin{equation*}
  \mathrm{Hom}\left(\Pyr{D},C\right) \simeq \mathrm{Hom}^{\rm n.oplax} \left(D, \sp C \right),
 \end{equation*}
 where $\mathrm{Hom}^{\rm n.oplax}$ denotes the set of normal oplax functors.
\end{thm}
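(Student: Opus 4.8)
The plan is to construct the bijection explicitly, producing maps in both directions and reducing every verification to a presentation of $\Pyr D$ by generators and relations. The starting observation is structural: a morphism $f_1 \to f_2$ in $\Pyr D$ is a pair $(a,b)$ with $f_1 = b \circ f_2 \circ a$, and it factors canonically as a ``left multiplication'' $\lambda_a = (a,\id)$ followed by a ``right multiplication'' $\rho_b = (\id,b)$ (equivalently in the other order), these generators satisfying the evident functoriality and interchange relations. In particular, for an arrow $f:c \to d$ the object $f \in \Pyr D$ receives canonical morphisms $\rho_f: f \to \id_c$ and $\lambda_f: f \to \id_d$, and for a composable pair $\xymatrixcolsep{.7pc}\xymatrix{c \ar[r]^-f & d \ar[r]^-g & e}$ there are canonical morphisms $\lambda_f: gf \to g$ and $\rho_g: gf \to f$.

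First I would build the forward map. Given a functor $G:\Pyr D \to C$, set $F(d) := G(\id_d)$ on objects, and to an arrow $f:c\to d$ assign the span $\xymatrixcolsep{1.2pc}\xymatrix{F(c) & G(f) \ar[l]_-{G(\rho_f)} \ar[r]^-{G(\lambda_f)} & F(d)}$. For a composable pair the maps $G(\lambda_f): G(gf)\to G(g)$ and $G(\rho_g): G(gf)\to G(f)$ agree after projection to $F(d)=G(\id_d)$, precisely because $\lambda_f$ followed by $g \to \id_d$ and $\rho_g$ followed by $f\to\id_d$ are the single morphism $(f,g): gf \to \id_d$ of $\Pyr D$; the universal property of the chosen pullback then produces the oplax structure map $\Phi_{g,f}: F(gf) \to F(g)\times_{F(d)} F(f)$. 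Normality is immediate, since $\id_d$ maps to the identity span and $gf = f$ whenever $g$ or $f$ is an identity.

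For the inverse, given a normal oplax functor $F$ I would set $G(f) := F(f)$ (the apex of the span) and $G(\id_d) := F(d)$, and send a morphism $(a,b): f_1 \to f_2$ with $f_1 = b\circ f_2 \circ a$ to the composite obtained by applying the iterated oplax structure maps to $F(f_1) = F(b\circ f_2 \circ a)$ and projecting to the middle factor $F(f_2)$; normality ensures the correct value when $a$ or $b$ is an identity. The two constructions are manifestly mutually inverse on objects and on the generators $\lambda_a, \rho_b$, so everything reduces to matching the relations. Checking that $G$ is functorial on triple composites is exactly the content of the oplax associativity coherence Eq.~\eqref{OplaxAssoc}: both sides of that equation, composed with the three projections of the iterated pullback $F(h)\times_{F(d)} F(g)\times_{F(c)} F(f)$, compute $G$ of the canonical morphisms $hgf \to h$, $hgf \to g$ and $hgf \to f$, so the equation is equivalent to $G$ being well defined on triple composites.

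I expect the main obstacle to be precisely this last bookkeeping: verifying, through repeated appeals to the universal property of the chosen pullbacks, that functoriality of $G$ corresponds term-by-term to the oplax unit and associativity axioms, and in particular that the inverse assignment is independent of the bracketing chosen for $b \circ f_2 \circ a$. Once this is in hand, I would close by observing that both constructions are natural in $D$ (along $\Pyr{-}:\Cat \to \Cat$) and in $C$ (along finite-limit-preserving functors), which yields the asserted naturality of the bijection.
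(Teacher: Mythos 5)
Your proposal is correct and is essentially the construction the paper records: the paper does not actually prove this theorem (it defers to Errington's thesis, 3.4.1), but the assignment it describes after the statement --- sending a normal oplax functor $F$ to the functor $f \mapsto F(f)$, $(g,h) \mapsto F_{g,h}$ given by projecting the iterated oplax structure map onto the middle pullback factor --- is exactly your inverse map, and your forward map (apex $G(f)$, legs $G(\rho_f)$, $G(\lambda_f)$, with $\Phi_{g,f}$ induced by the universal property of the pullback) recovers it. Your key mechanism for the coherence/functoriality checks, namely that both sides of Eq.~\ref{OplaxAssoc} agree because their composites with the three projections of $F(h)\times_{F(d)}F(g)\times_{F(c)}F(f)$ all compute $G$ of the canonical morphisms $hgf\to h$, $hgf\to g$, $hgf\to f$ in $\Pyr{D}$, is sound and supplies precisely the bookkeeping the paper leaves to the citation.
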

The isomorphism is given as follows. A normal oplax functor $F:D \nrightarrow \sp C$ associates to each diagram 
\begin{equation*}
 \xymatrixrowsep{.9pc} \xymatrix{d_1 \ar[r]^-{f_1} \ar[d]_-g & d_1' \\
 d_2 \ar[r]_-{f_2} & d_2'\ .\ar[u]_-h }
\end{equation*}
a diagram
\begin{equation*}
 \xymatrixrowsep{.9pc} \xymatrixcolsep{1pc} \xymatrix{ & & & F(f_1) \ar[rrd] \ar[d]_-{F_{g,h}} \ar[lld] & & & \\
  & F(g) \ar[ld] \ar[rd] & & F(f_2) \ar[ld] \ar[rd] & & F(h) \ar[ld] \ar[rd] & \\
  F(d_1) & & F(d_2) & & F(d_2') & & F(d_1')\, .}
\end{equation*}
The corresponding functor $\tilde{F}: \Pyr D \to C$ is then
\begin{equation*}
 \tilde{F}: f \mapsto F(f), \quad \left((g,h):f_1 \to f_2\right) \mapsto F_{g,h}.
\end{equation*}

Finally, an immediate corollary of Theorem \ref{ErrOplax} is the following:
\begin{cor}
 \label{Twistedcolim}
 Let $\Cat^\mathrm{lex}$ denote the category of categories having finite limits and finite limit preserving functors between them. Then for any diagram $L:J \to \Cat$, the functors
 \begin{equation*}
  \mathrm{Hom}\left( \Pyr{\colim L}, - \right), \mathrm{Hom}\left( \colim_{j \in J} \Pyr{L(j)}, -\right):\Cat^\mathrm{lex} \to \Set
 \end{equation*}
are naturally isomorphic.
\end{cor}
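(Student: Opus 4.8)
The plan is to move back and forth across the bijection of Theorem~\ref{ErrOplax}, trading functors out of twisted arrow categories for normal oplax functors into $\sp C$, thereby reducing the entire statement to a single assertion about normal oplax functors. Both functors in the statement are of the form $\mathrm{Hom}(-,C)$ with the fixed source being an ordinary category and $C$ ranging over $\Cat^\mathrm{lex}$, so it suffices to produce, naturally in such $C$, a bijection between their values. Naturality in $C$ will be automatic, since every identification I use is an instance of Theorem~\ref{ErrOplax}, which is natural in both variables.

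First I would rewrite the right-hand functor. Because $\colim_{j\in J}\Pyr{L(j)}$ is a colimit computed in $\Cat$, its universal property gives, for every $C$,
\begin{equation*}
 \mathrm{Hom}\left(\colim_{j\in J}\Pyr{L(j)},\,C\right)\;\cong\;\lim_{j\in J}\mathrm{Hom}\left(\Pyr{L(j)},\,C\right),
\end{equation*}
and applying Theorem~\ref{ErrOplax} termwise — invoking its naturality in the first variable to see that the structure maps of this limit diagram are respected — turns the right-hand side into $\lim_{j\in J}\mathrm{Hom}^{\rm n.oplax}(L(j),\sp C)$. For the left-hand functor I would simply apply Theorem~\ref{ErrOplax} once, obtaining $\mathrm{Hom}(\Pyr{\colim L},C)\cong\mathrm{Hom}^{\rm n.oplax}(\colim L,\sp C)$. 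The corollary is therefore equivalent to the natural bijection
\begin{equation*}
 \mathrm{Hom}^{\rm n.oplax}\!\left(\colim_{j\in J} L(j),\,\sp C\right)\;\cong\;\lim_{j\in J}\mathrm{Hom}^{\rm n.oplax}\!\left(L(j),\,\sp C\right),
\end{equation*}
that is, to the statement that for fixed $C$ the assignment $D\mapsto\mathrm{Hom}^{\rm n.oplax}(D,\sp C)$ carries colimits of categories to limits of sets.

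Establishing this last bijection is the heart of the matter and the step I expect to be the main obstacle. The map from left to right is restriction along the colimit cocone, and the content is to invert it: a normal oplax functor out of $\colim_j L(j)$ should be the same datum as a compatible family of normal oplax functors out of the pieces. A normal oplax functor $F$ is specified entirely on objects (an object of $C$ each), on morphisms (a span each) and on composable pairs (a coherence $2$-cell $\Phi$ each), subject only to the normalisation conditions on identities and the cocycle identity~\eqref{OplaxAssoc} on composable triples. Since a functor out of a colimit is freely and uniquely glued from compatible functors out of the pieces, the object- and morphism-level data assemble with no difficulty; the normalisation conditions then dispose of every composable pair one of whose factors is an identity. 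The genuinely delicate point is the coherence data attached to those composable pairs of the colimit whose two factors originate in \emph{different} pieces of the diagram, for which no single $L(j)$ supplies a value. This is precisely where the finite-limit hypothesis on $C$ must be used: one leverages the chosen pullback composition on $\sp C$ to define $F$ and $\Phi$ on such composites from the values already fixed on their factors, and then checks that the cocycle identity~\eqref{OplaxAssoc} holds for the newly created triples and that the prescription is independent of all choices. Verifying that this extension is well defined and genuinely inverse to restriction is the essential remainder; once it is in hand, reading the displayed chain of bijections backwards yields the asserted natural isomorphism.
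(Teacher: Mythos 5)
Your formal skeleton is exactly the paper's: apply Theorem~\ref{ErrOplax} twice and the universal property of the colimit in $\Cat$, reducing everything to the single claim that $\mathrm{Hom}^{\rm n.oplax}\left(\colim L, \sp{C}\right) \cong \lim_{j}\mathrm{Hom}^{\rm n.oplax}\left(L(j),\sp{C}\right)$. Where you differ is at that claim itself. The paper dispatches it in one sentence --- bicategories and normal oplax functors form a category, so corepresentable functors send colimits to limits --- while you correctly observe that this is not automatic, because $\colim L$ is a colimit in $\Cat$, not in the category of bicategories and normal oplax functors, and the colimit in $\Cat$ creates composites that live in no single $L(j)$. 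Your ``delicate point'' is real, and it is precisely the point the paper's one-line justification glosses over.

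The gap is that your proposal stops there, and the plan you sketch for closing it cannot succeed. Filling in the missing data on a new composite $g\circ f$ by the pullback $F(g)\times_{F(d)}F(f)$ with identity comparison cell does define a map backwards, but it is a \emph{section} of restriction, not an inverse: because the cells $\Phi_{g,f}$ of a normal oplax functor need not be invertible, the value $F(g\circ f)$ is genuinely free data which restriction forgets, so restriction is surjective but not injective. Concretely, take $J$ the span and $L = \left([1]\leftarrow[0]\rightarrow[1]\right)$, so that $\colim L = [2]$ in $\Cat$. A normal oplax functor $[2]\nrightarrow \sp{C}$ is two composable spans together with an \emph{arbitrary} span $F(02)$ and a map $\Phi\colon F(02)\to F(12)\times_{F(1)}F(01)$, whereas a compatible family on the pieces is just the two composable spans. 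Equivalently, $\Pyr{2}$ is the six-object pyramid while $\colim_j \Pyr{L(j)} = \Pyr{1}\amalg_{\Pyr{0}}\Pyr{1}$ is the five-object zigzag; taking $C=[1]$ (which has finite limits), there are $14$ functors $\Pyr{2}\to[1]$ but only $13$ functors from the zigzag. So the two functors of Corollary~\ref{Twistedcolim} are not even pointwise isomorphic for this $L$, and no argument --- yours, or the paper's corepresentability one-liner --- can establish the bijection for arbitrary diagrams $L\colon J\to\Cat$; a correct proof must either restrict the class of diagrams or exploit special features of the diagrams $L(\phi)$ to which the corollary is actually applied in Section~\ref{UnstrSpan}. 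Your instinct about where the difficulty sits is sounder than the paper's proof, but as written your proposal neither closes the gap nor could be completed as stated.
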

\begin{proof}
 Since bicategories and normal oplax functors between them assemble into a category, the set-valued functor $\mathrm{Hom}^{\rm n.oplax}(-,-)$ sends colimits in the first variable to limits in sets. Let $L:J \to \Cat$ be a diagram in $\Cat$. By Theorem \ref{ErrOplax} one has for each $C$ having finite limits the following string of natural bijections
 \begin{align*}
  \mathrm{Hom}\left(\Pyr{\colim L}, C \right) &\simeq \mathrm{Hom}^{\rm n.oplax} \left(\colim L , \sp C \right) \simeq \lim_{j \in J} \mathrm{Hom}^{\rm n.oplax}\left(L(j), \sp C \right) \\
  &\simeq \lim_{j \in J} \mathrm{Hom} \left( \Pyr{L(j)}, C \right) \simeq \mathrm{Hom} \left( \colim_{j \in J} \Pyr{L(j)}, C\right).
 \end{align*}

\end{proof}

\subsection{Definition of \texorpdfstring{$\csmtSpan \cC$}{Span(C)}}
\label{BispanConst}
Haugseng's construction of the symmetric monoidal $\inftwo$-category $\csmtSpan \cC$ is an iteration of Barwick's construction of the $\infty$-category $\cSpan \cC$ \cite{BarwickSpan}. One first defines a symmetric monoidal double $\infty$-category $\overline{\csmtSpan \cC} \in \cSeg {\Catoo^\otimes}$ which fails to be in $\cSego {\Catoo^\otimes}$. One then remedies this problem by defining $\csmtSpan \cC := \cU \left(\overline{\csmtSpan \cC}\right)$.

The morphisms in the $\inftwo$-category of bispans are given by diagrams in $\cC$ of the following form. Let $\Pyr{n}:=\Pyr{[n]}$ be the twisted arrow construction of $[n]$. Explicitly, this is the opposite of the poset of non-empty intervals in $[n]$. Similarly, the category $\Pyr{n,k} := \Pyr{[n] \times [k]} = \Pyr{n} \times \Pyr k$ is opposite of the poset of non-empty rectangles in $[n] \times [k]$. These assemble into a functor 
\begin{equation*}
\Pyr {\bullet,\bullet}: (\Delta)^2 \to \Cat. 
\end{equation*}
There is a full subcategory $\xymatrixcolsep{.95pc}\xymatrix{\Wedge n \ar@{^{(}->}[r] & \Pyr n}$ consisting of intervals $[i;j]$ with $|j-i| \leq 1$ and hence a full subcategory  $\xymatrixcolsep{1.1pc}\xymatrix{\Wedge {n,k} \ar@{^{(}->}[r] & \Pyr {n,k}}$. The $\infty$-categories $\cWedge {n}$ and $\cPyr {n}$ are, respectively, the nerves of $\Wedge {n}$ and $\Pyr {n}$.

\begin{exam}
\label{Sigmaintuit}
 A functor $\cPyr 2 \to \cC$ is a diagram,
\begin{equation*}
  \xymatrixrowsep{.8pc}\xymatrix{ & & c_{[0;2]} \ar[rd] \ar[ld] & & \\
  & c_{[0;1]} \ar[rd] \ar[ld] & & c_{[1;2]} \ar[rd] \ar[ld] & \\
  c_{[0;0]} & & c_{[1;1]} & & c_{[2;2]} \, . }
 \end{equation*}
Such a diagram is cartesian if it is the right Kan extension of its restriction to $\cWedge 2$, i.e., if the middle square is a pullback in $\cC$. In general, a functor $\cPyr n \to \cC$ is pyramid of spans on $n+1$ objects. It being cartesian says higher tiers of this pyramid consist of a coherent choice of pullbacks of the $n$ spans along the bottom two tiers. 
 \end{exam}

Presenting $\Catoo$ as $\cCSS$ we can now define the symmetric monoidal $\inftwo$-category of bispans. For an $\infty$-category $\cC$ having finite limits, consider the functor
\begin{equation*}
 \Finpt\times \left(\Delta^{\rm op}\right)^2 \to \Kan, \quad (S_*, [n],[k]) \mapsto \cMap^{\rm cart}\left(\cCartop S \times \cPyr {n,k}, \cC \right),
\end{equation*}
which is well-defined by Proposition 3.8 of \cite{RuneSpans}. Taking the coherent nerve defines a functor $\overline{\csmtSpan \cC}:\cFinpt \times \left(\Simpop\right)^2 \to \cS$.

\begin{prop}
 Let $\cC$ be an $\infty$-category with finite limits. Then the functor 
 \begin{equation*}
 \csmtSpan \cC:=\cU \left(\overline{\csmtSpan \cC}\right): \cFinDel \to \Catoo
 \end{equation*}
 is a symmetric monoidal $\inftwo$-category. We call this the {\em symmetric monoidal $\inftwo$-category of bispans in $\cC$}.
\end{prop}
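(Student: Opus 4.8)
The plan is to verify the two conditions in the definition of a symmetric monoidal $\inftwo$-category directly for $\csmtSpan\cC = \cU(\overline{\csmtSpan\cC})$, taking as given that $\overline{\csmtSpan\cC}$ is a symmetric monoidal double $\infty$-category. First I would record precisely what Haugseng's construction \cite{RuneSpans} supplies: presenting $\Catoo$ as $\cCSS$ with the variable $[k]$ as the internal direction, for each $S_*$ the assignment $[n] \mapsto \bigl([k] \mapsto \cMap^{\rm cart}(\cCartop S \times \cPyr{n,k}, \cC)\bigr)$ is a Segal object in $\Catoo$, so that $\overline{\csmtSpan\cC} \in \cSeg{\Catoo^\otimes}$, and moreover it satisfies the special $\Gamma$-space (product) condition in the $\cFinpt$-variable. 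The only failure of the conditions defining an $\inftwo$-category is the space condition: the $\infty$-category of objects $\overline{\csmtSpan\cC}(\{s\}_*,[0])$ is Barwick's $\infty$-category of spans $\cSpan\cC$, which is not a space.

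Applying $\cU$ is designed to repair exactly this. Since $\cU:\cSeg\Catoo \to \cSego\Catoo$ is a right adjoint landing in Segal objects whose $0$-simplices are spaces, applying it to the $\cFinpt$-indexed family $\{\overline{\csmtSpan\cC}(S_*,\bullet)\}$ produces, functorially in $S_*$, a well-defined functor $\csmtSpan\cC:\cFinDel \to \Catoo$; it preserves the Segal condition in $[n]$ and forces $\csmtSpan\cC(S_*,[0]) = \overline{\csmtSpan\cC}(S_*,[0])^\simeq$ to be a space, giving the Segal and space clauses of Condition $1$ for every $S_*$. For completeness I would observe that the core functor $(-)^\simeq$, being a right adjoint, preserves the pullback \eqref{LocPB}, in which the lower horizontal map becomes an identity after passing to cores; hence $\csmtSpan\cC(S_*,[n])^\simeq \simeq \overline{\csmtSpan\cC}(S_*,[n])^\simeq$, so the underlying Segal space is unchanged by $\cU$ and its completeness is inherited from $\overline{\csmtSpan\cC}$.

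It remains to establish Condition $2$ for $\csmtSpan\cC$, namely that $\csmtSpan\cC(S_*,[n]) \to \prod_{s \in S}\csmtSpan\cC(\{s\}_*,[n])$ is an equivalence. Here I would combine the fact that the analogous map is an equivalence for $\overline{\csmtSpan\cC}$ with the explicit shape of $\cU$ from \eqref{LocPB}: in degree $0$ it is the core functor and in higher degrees a pullback, and both operations commute with the products $\prod_{s\in S}$. Consequently the equivalence descends from $\overline{\csmtSpan\cC}$ to $\csmtSpan\cC$.

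The step I expect to be the main obstacle is precisely this last interaction: verifying that applying $\cU$ fiberwise over $\cFinpt$ genuinely assembles into a functor on $\cFinDel$ preserving the special $\Gamma$-space condition, rather than merely preserving it objectwise. This reduces to the commutation of the pullback \eqref{LocPB} and of $(-)^\simeq$ with the products indexing the symmetric monoidal structure; although each is an instance of limits commuting with limits, the compatibility must be checked coherently across all of $\cFinpt$ and $\Simpop$ simultaneously rather than one object at a time.
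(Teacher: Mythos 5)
Your overall architecture matches the paper's: Condition 1 is delegated to Haugseng, and Condition 2 is obtained by descending the product condition through $\cU$, using that $\cU$ (a right adjoint) preserves products. But there is a genuine gap at the first step: you take as given that $\overline{\csmtSpan\cC}$ already "satisfies the special $\Gamma$-space (product) condition in the $\cFinpt$-variable," attributing this to Haugseng's construction. This cannot simply be cited. As the remark preceding the proposition points out, Haugseng presents the symmetric monoidal structure via a sequence of deloopings, not as a functor out of $\cFinDel$; what Proposition 3.8 of \cite{RuneSpans} supplies is only that $(S_*,[n],[k]) \mapsto \cMap^{\rm cart}\left(\cCartop S \times \cPyr{n,k}, \cC\right)$ is a well-defined functor to $\Kan$. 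The assertion that
\begin{equation*}
\cMap^{\rm cart}\left(\cCartop S \times \cPyr{n,k},\cC\right) \to \prod_{s\in S} \cMap^{\rm cart}\left(\cCartop{\{s\}}\times\cPyr{n,k},\cC\right)
\end{equation*}
is an equivalence is precisely the mathematical content of the proposition, and the paper proves it rather than cites it: by Definition \ref{RKERem}, cartesian functors are the essential image of the fully faithful right Kan extension $j_*$ along $j\colon \cSingop S \times \cWedge{n,k} \hookrightarrow \cCartop S \times \cPyr{n,k}$, so restriction $j^*$ is an inverse equivalence onto $\cMap\left(\cSingop S \times \cWedge{n,k},\cC\right)$; the latter splits visibly as a product over $s\in S$, and under these identifications the product-condition map becomes $j^*$ itself. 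Without some such argument, your proof assumes exactly what is to be shown.

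A secondary point: the step you flag as the "main obstacle" --- that applying $\cU$ fiberwise over $\cFinpt$ assembles coherently into a functor on $\cFinDel$ --- is not where the difficulty lies. In the paper's setup $\cU$ is the right adjoint to the inclusion $\cSego{\Catoo^\otimes} \hookrightarrow \cSeg{\Catoo^\otimes}$ (\cite{RuneSpans} 2.13), so it is a functor on the relevant functor category and preserves limits; this is what makes the one-line reduction "since $\cU$ preserves limits it suffices to prove the claim for $\overline{\csmtSpan\cC}$" legitimate, with no objectwise-versus-coherent issue left to check. Your treatment of completeness (applying $(-)^\simeq$ to diagram \eqref{LocPB} to see that the core Segal space is unchanged by $\cU$) is correct as far as it goes, but as stated it presupposes completeness of the core Segal space of $\overline{\csmtSpan\cC}$, which is itself a nontrivial theorem; the paper sidesteps this by citing Corollary 6.5 of \cite{RuneSpans} for the whole of Condition 1.
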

\begin{remark}
 Our description of the symmetric monoidal structure arising from the cartesian product on $\cC$ differs from Haugseng. He instead presents the symmetric monoidal structure by giving a sequence of $(\infty,2+k)$-categories delooping the $\inftwo$-category of bispans (\cite{RuneSpans} 9.1).
\end{remark}

\begin{proof}
 By Corollary 6.5 of \cite{RuneSpans}, for each fixed $S_* \in \cFinpt$, the simplicial $\infty$-category $\csmtSpan {\cC}(S_*, \bullet, \bullet)$ is a $\inftwo$-category. Consider the following commutative diagram in $\cS$,
 \begin{equation*}
  \xymatrixrowsep{.8pc}\xymatrix{\overline{\csmtSpan {\cC}}(S_*,[n],[k]) \ar[d]_-{\wr} \ar[r] & \prod_{s \in S} \overline{\csmtSpan {\cC}}(\{s\}_*,[n],[k]) \ar[d]^-{\wr} \\
  \cMap^{\rm cart}\left(\cCartop S \times \cPyr {n,k}, \cC \right) \ar[r]_{j^*} & \cMap\left(\cSingop S \times \cWedge {n,k}, \cC \right) \, ,}
 \end{equation*}
 where $j$ denotes the fully faithful functor including $\cSingop S \times \cWedge{n,k}$ into $\cCartop S \times \cPyr{n,k}$. Since $\cU$ preserves limits it suffices to show that the top map is an equivalence. 
 
 By Definition \ref{RKERem}, the category $\cFun^{\rm cart}\left(\cCartop S \times \cPyr {n,k}, \cC \right)$ is the image of $\cFun\left(\cSingop S \times \cWedge {n,k}, \cC \right)$ under $j_*$, the right adjoint of $j^*$. Since $j_*$ is fully faithful it is an equivalence onto its image with inverse $j^*$. It follows that the top map in the above diagram is an equivalence, as desired.
\end{proof}

\subsection{\texorpdfstring{$\csmtSpan \cC$}{Span(C)} is semistrict}
\label{StrictBispan}
Describing symmetric monoidal lax functors for general symmetric monoidal $\inftwo$-categories can be quite complicated due to the use of the unstraightening construction. For the following class of symmetric monoidal $\inftwo$-categories it simplifies greatly.

Recall that a functor $N(C) \to \Catoo$ is equivalent to a functor $\fC N(C)\to \qCat$ of simplicially enriched categories, where $\fC$ is the left adjoint of the coherent nerve (\cite{HTT} 1.1.5).
\begin{defn}
For an ordinary category $C$, a functor $N(C) \to \Catoo$ is called {\em semistrict} if there is a functor of ordinary categories $C \to \qCat^0$, for $\qCat^0$ the ordinary category of quasi-categories and functors between them, such that the following commutes
\begin{equation*}
 \xymatrixrowsep{.9pc} \xymatrix{ \fC N(C) \ar[d]_-{\epsilon} \ar[r] & \qCat \\
 C \ar[r] & \qCat^0 \ar@{^{(}->}[u]\, , }
\end{equation*}
where $\epsilon$ is the counit of the adjunction $\fC \dashv N$. In particular, a symmetric monoidal $\inftwo$-category is semistrict if it is semistrict as a functor $\cFinDel \to \Catoo$.
\end{defn}

The aim of this section is to show that $\csmtSpan \cC$ is semistrict. To that end, we must first compute the pullback of $\infty$-categories in diagram \ref{LocPB}, or equivalently, the homotopy pullback of
\begin{equation}
\label{LocPBCSS}
 \xymatrixrowsep{.8pc}\xymatrix{   & \overline{\csmtSpan \cC}(S_*,[n], \bullet) \ar[d]^-v \\
 \left(\left(\overline{\csmtSpan \cC}(S_*,[0],\bullet)\right)^\simeq\right)^{n+1} \ar@{^{(}->}[r] & \left(\overline{\csmtSpan \cC}(S_*,[0],\bullet)\right)^{n+1} }
\end{equation}
in $\CSS$, the category of bisimplicial sets endowed with the Rezk model structure \cite{RezkCSS}.

The computation is rendered trivial by the following lemma.
\begin{lem}
\label{FibProp}
 For each $(S_*,[n]) \in \FinDel$, the morphism 
 \begin{equation*}
 v:\overline{\csmtSpan \cC}(S_*,[n], \bullet) \to \left(\overline{\csmtSpan \cC}(S_*,[0],\bullet)\right)^{n+1}
 \end{equation*}
 is a fibration in $\CSS$.
\end{lem}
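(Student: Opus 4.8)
The plan is to identify $v$ with a restriction map and to show it is a fibration by first reducing, via fibrancy of the endpoints, to the underlying Reedy model structure on bisimplicial sets, and then establishing the Reedy condition by a pushout-product computation.

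I would begin by recording that both the source and the target of $v$ are fibrant objects of $\CSS$. Since $\overline{\csmtSpan\cC}$ is a Segal object in $\Catoo$ presented as $\cCSS$, each simplicial space $\overline{\csmtSpan\cC}(S_*,[m],\bullet)$ is by construction a complete Segal space, hence Reedy fibrant; and finite products of complete Segal spaces are again complete Segal spaces, so $(\overline{\csmtSpan\cC}(S_*,[0],\bullet))^{n+1}$ is fibrant as well. Now $\CSS$ is a left Bousfield localization of the Reedy model structure, and a standard feature of such localizations is that a map between fibrant objects is a fibration in the localization exactly when it is a fibration in the ambient model structure. It therefore suffices to prove that $v$ is a Reedy fibration.

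Next I would make $v$ explicit. In the external simplicial variable $\bullet$ (which is the direction carrying the $\CSS$-structure), $v$ is, levelwise, the restriction map
\begin{equation*}
\cMap^{\rm cart}(\cCartop S\times\cPyr n\times\cPyr k,\cC)\longrightarrow \cMap^{\rm cart}\Big(\cCartop S\times\big(\textstyle\coprod_{i=0}^n\cPyr 0\big)\times\cPyr k,\cC\Big)
\end{equation*}
induced by the inclusion $\coprod_{i=0}^n\cPyr 0\hookrightarrow\cPyr n$ of the diagonal vertices $[i;i]$ in the first pyramid factor. The Reedy structure lives entirely in the variable $k$, which is governed by the cosimplicial simplicial set $\cPyr\bullet$; this object is Reedy cofibrant, so its latching maps $L_k\hookrightarrow\cPyr k$ are monomorphisms. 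Because the first pyramid factor and $\cCartop S$ are constant in $k$, each relative latching map of the indexing cosimplicial simplicial set is the Leibniz product of the monomorphism $\coprod_i\cPyr 0\hookrightarrow\cPyr n$ with the latching monomorphism of $\cPyr\bullet$, and a pushout-product of two monomorphisms is again a monomorphism. For the \emph{non-cartesian} mapping spaces $\cMap(-,\cC)$ this already gives the Reedy condition, since for a quasi-category $\cC$ the functor $\cMap(-,\cC)$ carries colimits to limits and monomorphisms to Kan fibrations (the latter because $\cFun(B,\cC)\to\cFun(A,\cC)$ is a categorical fibration for $A\hookrightarrow B$, and the maximal-Kan-complex functor sends categorical fibrations to Kan fibrations).

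The main obstacle is to promote this from $\cMap(-,\cC)$ to the \emph{cartesian} mapping spaces appearing in $v$. The cartesian condition is not simply detected by restriction to the diagonal vertices, so $v$ is not a literal base change of the non-cartesian restriction map; moreover the wedge $\cWedge\bullet$ is not functorial in the full simplicial category, so one cannot replace $\cPyr\bullet$ by $\cWedge\bullet$ throughout. I would handle this using the equivalence $\cMap^{\rm cart}(\cCartop S\times\cPyr{m,k},\cC)\simeq\cMap(\cSingop S\times\cWedge{m,k},\cC)$ established in the proof that $\csmtSpan\cC$ is an $\inftwo$-category, together with the naturality of right Kan extension, to verify that restriction along the diagonal vertices preserves cartesian diagrams and that the induced maps on matching objects inherit the Kan-fibration property from the non-cartesian computation above. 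Making this last compatibility precise --- reconciling the cartesian right-Kan-extension condition with the matching objects of $\cPyr\bullet$ in the variable $k$ --- is where the real work lies; the fibrancy reduction and the pushout-product computation are routine once it is in place.
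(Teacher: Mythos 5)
Your reduction to the Reedy model structure (via fibrancy of source and target in $\CSS$) and your treatment of the \emph{non-cartesian} mapping spaces are both sound, and they match the paper's strategy in substance: the paper phrases the Reedy condition via Rezk's pullback criterion and an explicit pushout $B(n,k)$ of simplicial sets rather than via latching objects, but that is the same computation. The genuine gap is the step you yourself flag at the end: promoting the Kan-fibration property from $\cMap(-,\cC)$ to $\cMap^{\rm cart}(-,\cC)$. Your proposal contains no argument for this, only a declaration that it is "where the real work lies," and the route you sketch --- passing through the equivalence $\cMap^{\rm cart}(\cCartop S\times\cPyr{m,k},\cC)\simeq\cMap(\cSingop S\times\cWedge{m,k},\cC)$ together with "naturality of right Kan extension" --- runs into exactly the non-functoriality of $\cWedge{\bullet}$ that you note, so it is not clear it can be completed in that form.

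The paper resolves this point with a short observation that makes any such compatibility unnecessary: being cartesian (being a right Kan extension of the restriction to $\cSingop S\times\cWedge{n,k}$) is invariant under equivalence in the functor quasi-category, so $\cMap^{\rm cart}(\cCartop S\times\cPyr{n,k},\cC)$ is a \emph{union of connected components} of $\cMap(\cCartop S\times\cPyr{n,k},\cC)$, and likewise the target of the Reedy comparison map $\overline{\nu}$ is a union of connected components of the corresponding limit of full mapping spaces. Since any filler of a horn along a Kan fibration lies in the same connected component as the horn, a Kan fibration restricts to a Kan fibration on such subobjects; hence the Reedy condition for the cartesian spaces follows formally from the non-cartesian computation, with no reconciliation of the cartesian condition against matching objects needed. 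A second ingredient worth borrowing: to identify the limit of full mapping spaces as a single mapping space $\cMap(\cCartop S\times\cPyr{B(n,k)},\cC)$ and to know the comparison map is restriction along a monomorphism, the paper shows that $\cPyr{\bullet}$, extended cocontinuously to $\sSet$, equals the edgewise-subdivision pullback $\epsilon^*$, hence preserves colimits, products and monomorphisms; this is what makes your pushout-product-of-monomorphisms step precise, after which Lemma 3.1.3.6 of \cite{HTT} gives the Kan fibration.
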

\begin{proof}
 It suffices to show that $v$ is a Reedy fibration since $\CSS$ is a left Bousfield localisation of the Reedy model structure on the category of bisimplicial sets, and both the source and the target of $v$ are fibrant in $\CSS$ (\cite{Hirschhorn} 3.3.16).
 
 For the purposes of this proof we will use the shorthand $\cM(-,-) := \cMap(-,-)$. Consider the following commutative diagram in $\sSet$,
 \begin{equation}
 \label{FibPropPB}
  \xymatrix{\cM^{\rm cart} \left(\cCartop S \times \cPyr {n,k}, \cC\right) \ar[r] \ar[d] & \left( \cM^{\rm cart} \left( \cCartop S \times \cPyr {n,\bullet}, \cC\right) \right)^{\partial \Simplex k} \ar[d] \\
  \left( \cM^{\rm cart} \left(\cCartop S \times \cPyr {0,k}, \cC\right) \right)^{n+1} \ar[r] & \left(\left( \cM^{\rm cart} \left(\cCartop S \times \cPyr {0,\bullet}, \cC\right) \right)^{n+1}\right)^{\partial \Simplex k}}
 \end{equation}
 The morphism $v$ is a Reedy fibration if and only if the induced map $\overline{\nu}$ from $\cM^{\rm cart} \left(\cCartop S \times \cPyr {n,k}, \cC\right)$ to the pullback is a Kan fibration of simplicial sets (\cite{RezkCSS} 2.4).
 
 Let $B(n,k)$ be the pushout of simplicial sets
\begin{equation*}
 \xymatrix{(\Simplex 0)^{\amalg n+1} \times \partial \Simplex k \ar[r] \ar[d] & \Simplex n \times \partial \Simplex k \ar[d] \\
 (\Simplex 0)^{\amalg n+1} \times \Simplex k \ar[r] & B(n,k) \, .}
\end{equation*}
The pullback of diagram \ref{FibPropPB} is $\lim_{\Simplex m \to B(n,k)} \cM^{\rm cart}\left( \cCartop S \times \cPyr m , \cC\right)$ since $\cM (-,\cC):\sSet \to \sSet$ sends colimits to limits. To prove that $\overline{\nu}$ is a Kan fibration it suffices to show that
\begin{equation*}
 \nu: \cM\left(\cCartop S \times \cPyr {n,k}, \cC\right) \to \lim_{\Simplex m \to B(n,k)} \cM \left( \cCartop S \times \cPyr m , \cC\right)
\end{equation*}
is a Kan fibration as the target of $\overline{\nu}$ is a union of connected components of the target of $\nu$.

By left Kan extension along the Yoneda embedding one can extend $\cPyr{\bullet}$ to a functor $\cPyr {\bullet}: \sSet \to \sSet$. Let $\epsilon: \Delta \to \Delta$ to be the edgewise subdivison functor sending $[n]$ to $[n] \star [n]^{\rm op}$ (\cite{BarwickSpan} 2.5). Since $\cPyr n = \epsilon^*\Simplex n$ and both functors preserve colimits one has that $\cPyr\bullet = \epsilon^*$. From this we conclude that $\cPyr\bullet$ preserves products and monomorphisms.

It follows, therefore, that the target of $\nu$ is $\cM \left( \cCartop S \times \cPyr {B(n,k)}, \cC \right)$. The morphism $\nu$ is induced by the functor $\cPyr {B(n,k)} \to \cPyr {n,k}$, which is itself induced by the inclusion $\xymatrixcolsep{1pc}\xymatrix{B(n,k) \ar@{^{(}->}[r] & \Simplex n \times \Simplex k}$ and so is a monomorphism. Hence, by Lemma 3.1.3.6 of \cite{HTT}, $\nu$ is a Kan fibration.
\end{proof}

Next, denote by $V_{n,k}$ the sub-poset of $[n]\times[k]$ on those morphisms of the form $(\id,g)$. We say a functor $\cCartop S \times \cPyr {n,k} \to \cC$ is {\em vertically constant} if morphisms of the form $(\id, v)$ for $v \in \cPyr{V_{n,k}}$ are sent to equivalences.
\begin{defn}
\label{BispanqCat}
 Let $\cC$ be an $\infty$-category with finite limits and $(S_*,[n]) \in \FinDel$. Define $\Sp_{S,n}(\cC)$ to be the simplicial set having $k$-simplices the cartesian, vertically constant functors $\cCartop S \times \cPyr {n,k} \to \cC$.
\end{defn}

\begin{prop}
\label{BispanSS}
Let $\cC$ be an $\infty$-category with finite limits. Then for each $(S_*,[n])$ in $\FinDel$, the simplicial set $\Sp_{S,n}(\cC)$ is a quasi-category. Furthermore, the symmetric monoidal $\inftwo$-category of bispans $\csmtSpan \cC$ is given by the functor
  \begin{equation*}
   \FinDel \to \qCat^0, \quad (S_*, [n]) \mapsto \Sp_{S,n} (\cC),
   \end{equation*}
and so is semistrict.
\end{prop}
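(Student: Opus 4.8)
The plan is to identify the simplicial set $\Sp_{S,n}(\cC)$ with the underlying quasi-category of the $\infty$-category $\csmtSpan \cC(S_*,[n]) = \cU\cB_n$, where $\cB = \overline{\csmtSpan \cC}(S_*,\bullet,\bullet)$ is the Segal object in $\Catoo \simeq \cCSS$ from Section \ref{BispanConst}. Presenting $\Catoo$ as $\cCSS$, the underlying quasi-category of a complete Segal space $W$ is its row of $0$-simplices $[k] \mapsto (W[k])_0$, and this assignment is the right Quillen equivalence $\CSS \to \sSet_{\rm Joyal}$ of \cite{JoyalTierney}; in particular it is a strict, limit-preserving functor valued in $\qCat^0$, and it sends complete Segal spaces to quasi-categories. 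Thus both assertions --- that $\Sp_{S,n}(\cC)$ is a quasi-category and that it computes $\csmtSpan \cC(S_*,[n])$ --- will follow at once once I produce an isomorphism of simplicial sets between $\Sp_{S,n}(\cC)$ and the $0$-simplex row of $\cU\cB_n$, naturally in $(S_*,[n])$.

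First I would use Lemma \ref{FibProp} to replace the defining homotopy pullback \ref{LocPBCSS} by a strict one. Since $\CSS$ is a left Bousfield localisation of the right-proper Reedy model structure it is itself right proper (\cite{Hirschhorn}), and the three remaining corners of \ref{LocPBCSS} are all fibrant; as $v$ is a fibration by Lemma \ref{FibProp}, the strict pullback of bisimplicial sets computes the homotopy pullback and so represents $\cU\cB_n = \csmtSpan \cC(S_*,[n],\bullet)$. Next I would pass to the row of $0$-simplices. Because taking $0$-simplices in each simplicial degree preserves limits, the $0$-row of the strict pullback is the strict pullback of the $0$-rows. In degree $k$ the top corner contributes the set of cartesian functors $\cCartop S \times \cPyr {n,k} \to \cC$, while the $n+1$ maps to $\cB_0$ are restriction along the vertex inclusions $\cPyr {0,k} \hookrightarrow \cPyr {n,k}$ picking out the columns over the vertices $[i;i]$.

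The crux, and the step I expect to be the main obstacle, is to identify the corner $((\cB_0^\simeq)[k])^{n+1}$, that is, to compute the core $\cB_0^\simeq$ of the $\infty$-category $\cB_0 = \overline{\csmtSpan \cC}(S_*,[0],\bullet)$ in the complete Segal space model. The objects of $\cB_0$ are the cartesian functors $\cCartop S \to \cC$ and its morphisms are spans of such, a morphism being an equivalence precisely when both legs of the span are equivalences in $\cC$. Consequently a $0$-simplex in degree $k$ of the core $\cB_0^\simeq$ is a cartesian functor $\cCartop S \times \cPyr k \to \cC$ carrying every morphism in the $\cPyr k$ direction to an equivalence. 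Under the $i$-th vertex inclusion, which identifies $\cPyr {0,k} \cong \cPyr k$ with the column over $[i;i]$, the $\cPyr k$ direction is exactly the vertical direction $(\id,g)$ of $\cPyr {n,k}$ over that vertex. Hence the pullback condition says precisely that each vertex restriction sends vertical morphisms to equivalences, which is the vertically constant condition of Definition \ref{BispanqCat}. This identifies the $0$-row of $\cU\cB_n$ in degree $k$ with the set of cartesian, vertically constant functors $\cCartop S \times \cPyr {n,k} \to \cC$, that is, with $\Sp_{S,n}(\cC)_k$, compatibly with the simplicial structure. Pinning down the core in the CSS model and matching it against vertical constancy is where completeness and the precise shape of $\cPyr {n,k}$ enter, and where care is required.

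Finally, for functoriality and semistrictness I would observe that every ingredient is strict: forming $\cMap^{\rm cart}$ by restriction along the maps of $\cCartop{-}$ and $\cPyr {\bullet,\bullet}$ induced from $\FinDel$, the strict pullback provided by Lemma \ref{FibProp}, and the $0$-row functor are all strict functors of ordinary categories valued in $\qCat^0$. Assembling them yields a strict functor $\FinDel \to \qCat^0$, $(S_*,[n]) \mapsto \Sp_{S,n}(\cC)$, whose image under the coherent nerve recovers $\csmtSpan \cC : \cFinDel \to \Catoo$ by the natural isomorphism established above. By the very definition of semistrictness this exhibits $\csmtSpan \cC$ as semistrict, completing the proof.
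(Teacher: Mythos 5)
Your proposal is correct and takes essentially the same route as the paper: use Lemma \ref{FibProp} together with fibrancy of all corners (\cite{HTT} A.2.4.4) to replace the homotopy pullback \ref{LocPBCSS} by a strict one, identify the core $\left(\overline{\csmtSpan \cC}(S_*,[0],\bullet)\right)^\simeq$ via the complete-Segal-space description of invertible edges so that the pullback's $k$-simplices become the cartesian, vertically constant functors, and then apply the $0$-row right Quillen functor of Joyal--Tierney to land in quasi-categories, with semistrictness following from the strictness of every ingredient. One small caveat: your side remark that a left Bousfield localisation of a right proper model structure is again right proper is false in general, but it is also unnecessary, since your fibrancy justification (all three corners fibrant and $v$ a fibration) is precisely what the cited result requires and is exactly the argument the paper uses.
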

\begin{proof}
Recall that for a complete Segal space $\cX$ each vertex of $\cX_k$ determines a sequence of composable morphisms $[f_i]$ in the homotopy category of $\cX$. Then $\cX^\simeq_k$ is the full sub simplicial set of $\cX_k$ on those vertices having each $[f_i]$ invertible in the homotopy category (\cite{Lurieinftwo} 1.1.11). 

In particular, $\left(\overline{\csmtSpan \cC}(S_*,[0],\bullet)\right)^\simeq_k$ is the full sub simplicial set of $\cMap^{\rm cart}(\cCartop S \times \cPyr k,\cC)$ generated by those functors sending morphisms of the form $(\id, f)$ to equivalences. Since all of the objects in diagram \ref{LocPBCSS} are fibrant, Lemma \ref{FibProp} implies that the ordinary pullback in $\CSS$ is a complete Segal space presenting the homotopy pullback (\cite{HTT} A.2.4.4). We can therefore conclude that $\csmtSpan \cC(S_*,[n],[k])$ is the full sub simplicial set of $\cMap^{\rm cart}(\cCartop S \times \cPyr {n,k},\cC)$ generated by those functors sending morphisms of the form $(\id, v)$ for $v \in \cPyr{V_{n,k}}$ to equivalences.

The canonical equivalence $\cCSS \simeq \Catoo$ is presented by a Quillen equivalence (\cite{JoyalTierney} 4.11)
\begin{equation*}
 \radj[(p_1)*][(i_1)^*]{\CSS}{(\sSet)_{\rm Joyal}}, \quad (p_1)^*: X_{\bullet,\bullet} \mapsto X_{\bullet,0} \ .
\end{equation*}
By definition $\Sp_{S,n}(\cC) = (p_1)^* \csmtSpan \cC(S_*,[n],\bullet)$, which is a quasi-category since $(p_1)^*$ preserves fibrations.
\end{proof}

\begin{cor}
 \label{bispansop}
 For any $\infty$-category $\cC$ having finite limits its symmetric monoidal $\inftwo$-category of bispans is equivalent to its opposite.
\end{cor}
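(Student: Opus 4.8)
The plan is to exploit a self-duality of the twisted arrow construction which, at the level of $1$-morphisms, simply interchanges the two legs of a span. Recall that the opposite $\inftwo$-category is obtained by precomposing the first simplicial coordinate with the automorphism $\op:\Delta\to\Delta$, so that $(\csmtSpan{\cC})^\op(S_*,[n],[k]) = \csmtSpan{\cC}(S_*,[n]^\op,[k])$, the opposite being taken fibrewise over $\cFinpt$. By Proposition \ref{BispanSS} both sides are computed, in the semistrict model, as simplicial sets of cartesian, vertically constant functors out of $\cCartop S\times\cPyr{\bullet}\times\cPyr{\bullet}$, where only the first twisted-arrow factor is affected by $\op$. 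It therefore suffices to produce a natural isomorphism between the relevant indexing categories that touches only the $[n]$-coordinate and is compatible with the cartesian and vertical-constancy conditions.

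First I would construct, for every ordinary category $D$, an isomorphism $\phi_D\colon\Pyr D\xrightarrow{\sim}\Pyr{D^\op}$ which is the identity on objects under the identification of an arrow $f\colon d\to d'$ with its reverse $f^\op\colon d'\to d$, and which sends a morphism of $\Pyr D$, consisting of a pair $(d_1\to d_2,\ d_2'\to d_1')$, to the morphism of $\Pyr{D^\op}$ obtained by dualising and interchanging the two legs. An elementary check shows that $\phi_D$ is an isomorphism of categories, natural in $D\in\Cat$; conceptually, $\phi$ is the formal shadow of the operation that reverses a span in $\sp{\cC}$. Composing $\phi$ with the order-reversing isomorphisms $[n]\cong[n]^\op$---which exhibit the inclusion $\Delta\hookrightarrow\Cat$ precomposed with $\op$ as naturally isomorphic to the opposite-category functor $(-)^\op$ restricted to $\Delta$---yields a natural isomorphism $\Pyr{\op(\bullet)}\cong\Pyr{\bullet}$ of functors $\Delta\to\Cat$. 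Passing to nerves gives a natural isomorphism $\cPyr{\op(\bullet)}\cong\cPyr{\bullet}$.

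Next I would transport this isomorphism through the model of Proposition \ref{BispanSS}. For each $(S_*,[n],[k])$, precomposition with $\phi$ in the first twisted-arrow variable induces a bijection between functors $\cCartop S\times\cPyr{[n]^\op}\times\cPyr k\to\cC$ and functors $\cCartop S\times\cPyr n\times\cPyr k\to\cC$, and it remains to verify that the two defining conditions of Definition \ref{BispanqCat} are preserved. Cartesianness is preserved because $\phi$ carries the full subcategory $\cWedge{}$ of length-$\leq 1$ intervals to itself---reversing an arrow of length at most one again has length at most one---so the right Kan extension condition along $\cWedge{n,k}\hookrightarrow\cPyr{n,k}$ is respected. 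Vertical constancy is preserved because $\phi$ acts as the identity on the $\cCartop S$ and $\cPyr k$ factors and sends identities to identities in the $\cPyr n$ factor, hence fixes the class of morphisms of the form $(\id,v)$ with $v\in\cPyr{V_{n,k}}$. Consequently $\phi$ restricts to a bijection between the defining simplicial set of $(\csmtSpan{\cC})^\op(S_*,[n],\bullet)$ and $\Sp_{S,n}(\cC)$.

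Finally I would assemble these bijections. Naturality of $\phi$ in $[n]$, together with the fact that it leaves the $S_*$ and $[k]$ coordinates untouched, guarantees compatibility with all structure maps in $\cFinDel$; since $\phi$ is an honest isomorphism of categories, the bijections are level-wise isomorphisms of simplicial sets and therefore define an isomorphism of functors $\cFinDel\to\qCat^0$, hence an equivalence $\csmtSpan{\cC}\simeq(\csmtSpan{\cC})^\op$ of symmetric monoidal $\inftwo$-categories. I expect the only genuine work to be the bookkeeping of the last two paragraphs---verifying that the twisted-arrow self-duality interacts correctly with the simplicial structure maps and preserves both the cartesian and vertical-constancy conditions---while the self-duality $\phi$ itself is purely formal.
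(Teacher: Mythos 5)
Your proposal is correct and takes essentially the same route as the paper: the paper's proof likewise works in the semistrict model of Proposition \ref{BispanSS} and deduces $\Sp_{S,[n]^\op}(\cC)\simeq \Sp_{S,n}(\cC)$ from the canonical isomorphism $[n]\cong[n]^\op$ inducing $\cPyr{[n]^\op,k}\simeq\cPyr{n,k}$. Your additional work---the explicit twisted-arrow self-duality, its naturality in $[n]$, and the checks that the cartesian and vertical-constancy conditions are preserved---just fills in details the paper's two-sentence argument leaves implicit.
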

\begin{proof}
 By Proposition \ref{BispanSS}, the opposite of the symmetric monoidal $\inftwo$-category of bispans is
 \begin{equation*}
  \FinDel \to \qCat^0, \quad (S_*, [n]) \mapsto \Sp_{S,[n]^\op} (\cC), 
 \end{equation*}
where the quasi-category $\Sp_{S,[n]^\op}(\cC)$ has $k$-simplices the cartesian and vertically constant functors $\cCartop S \times \cPyr{[n]^\op,k} \to \cC$. Since $[n]$ is canonically isomorphic to $[n]^\op$, one has an equivalence $\cPyr{[n]^\op,k} \simeq \cPyr{n,k}$ and hence $\Sp_{S,[n]^\op}(\cC) \simeq \Sp_{S,n}(\cC)$.
\end{proof}

\subsection{Unstraightening \texorpdfstring{$\csmtSpan {N(C)}$}{Span(N(C))}}
\label{UnstrSpan}

The aim of this section is to give a description, sufficient for our purposes, of the unstraightening of the symmetric monoidal $\inftwo$-category of bispans for an ordinary category $C$ having finite limits. In general, determining the quasi-category $\Un(F)$ unstraightening a functor $F:\cD \to \Catoo$ can be quite difficult. This can, however, be done in the special case when $\cD=N(D)$ for an ordinary category $D$ and $F$ is semistrict (\cite{HTT} 3.2.5.2). In this case, the set of $k$-simplices $\Un(F)_k$ is the set of pairs
\begin{equation}
\label{RelNerve}
 \left( \sigma \in N_k(D), \ \left\{ \tau(J): J \to F(\sigma(\overline{J}))\right\}_{\emptyset \neq J \subset [k]} \right),
\end{equation}
where $\overline{J}$ is the maximal element of $J$. The family of functors $\tau$ must be such that for each $\emptyset \neq J \subset L \subset [k]$ the following diagram commutes
\begin{equation*}
  \xymatrixrowsep{1.1pc} \xymatrix{ J \ar[r]^-{\tau(J)} \ar@{^{(}->}[d] & F(\sigma(\overline{J})) \ar[d]^{F(\sigma(\overline{J} \leq \overline{L})} \\
  L \ar[r]_-{\tau(L)} & F(\sigma(\overline{L}))}
 \end{equation*}

 By Proposition \ref{BispanSS}, $\csmtSpan \cC$ is semistrict for any $\infty$-category $\cC$ having finite limits, in particular when $\cC=N(C)$ as we shall assume for the remainder of this section\footnote{In fact, we only ever make use of the particular case of $C = \sSetfop$. Performing a similar analysis as we present in this section for a general $\infty$-category would involve explicitly determining certain colimits in $\Catoo$. This is both considerably more difficult and unnecessary for our purposes.}. We can therefore apply the above to compute its unstraightening.

\begin{remark}
  As a slight abuse of notation we will not distinguish between an element $(f,\phi) \in N_k(\FinDel)$ and its opposite functor
  \begin{equation*}
   (f,\phi): [k]^\op \to \Finpt^\op \times \Delta.
  \end{equation*}
Furthermore, for $j \leq j' \in [k]$ we write the composite morphisms as
\begin{equation*}
 f_{j',j}: f(j) \to f(j')\in \Finpt \ \mathrm{and} \ \phi_{j',j}: \phi(j') \to \phi(j) \in \Delta.
\end{equation*}
 \end{remark}
 
Let $\Delta_{\mathrm{inj}}$ be the wide subcategory of $\Delta$ on the injective maps and let $I(k)$ denote the full subcategory of $[k]^{\rm op} \times \Cartne {[k]}$ on those objects $(j,J)$ such that $\emptyset \neq J \subset [j]$. For each $(f,\phi)\in N_k(\FinDel)$, denote by $E(f,\phi)$ the composite 
 \begin{equation*}
  \xymatrixcolsep{1.4pc}\xymatrix{I(k) \ar@{^{(}->}[r] & [k]^{\rm op} \times \Cartne {[k]} \ar[rr]^-{(f,\phi) \times O} & & \Finpt^{\rm op} \times \Delta^2 \ar[rrr]^-{\Cartop - \times \Pyr {\bullet,\bullet}} & & & \Cat,}
 \end{equation*}
 where $O$ is the forgetful map from $\Cartne {[k]}$ to $\Delta$.

 \begin{lem}
  The $k$-simplices of the unstraightening of $\csmtSpan{N(C)}$ are the pairs
\begin{equation*}
\left((f,\phi)\in N_k(\FinDel), \ \tau: E(f,\phi) \Rightarrow {\rm const}_C\right),
\end{equation*}
such that each component $\tau_{j,J}:\Cartop {f(j)} \times \Pyr{\phi(j),J} \to C$ is cartesian and vertically constant.
 \end{lem}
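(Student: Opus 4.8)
The plan is to combine the general semistrict unstraightening formula in Eq.~\ref{RelNerve} with the explicit semistrict presentation of $\csmtSpan{N(C)}$ furnished by Proposition \ref{BispanSS}. Since $\csmtSpan{N(C)}$ is a semistrict symmetric monoidal $\inftwo$-category over the base $N(\FinDel)$, the unstraightening is computed by the relative nerve: a $k$-simplex consists of a simplex $(f,\phi) \in N_k(\FinDel)$ together with a compatible family of functors $\tau(J): J \to \Sp_{f(\overline{J}),\phi(\overline{J})}(C)$ indexed by the nonempty subsets $\emptyset \neq J \subset [k]$, where by Definition \ref{BispanqCat} the value $\Sp_{S,n}(C)$ is itself the quasi-category whose simplices are the cartesian, vertically constant functors out of $\cCartop{S} \times \cPyr{n,\bullet}$. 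The proof is then essentially an unwinding: I would translate each piece of datum in Eq.~\ref{RelNerve} under the identification $F(f(j),\phi(j)) = \Sp_{f(j),\phi(j)}(C)$ and check that the compatibility condition in the relative nerve matches precisely the condition that the $\tau_{j,J}$ assemble into a single natural transformation $\tau: E(f,\phi) \Rightarrow {\rm const}_C$.

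First I would fix a $k$-simplex $(f,\phi)$ of $N(\FinDel)$ and examine the individual data $\tau(J)$. A functor $\tau(J): J \to \Sp_{f(\overline{J}),\phi(\overline{J})}(C)$, where $J = \{j_0 < \cdots < j_m\}$ is regarded as a linearly ordered set, is the same as an $m$-simplex of the quasi-category $\Sp_{f(\overline{J}),\phi(\overline{J})}(C)$, which by Definition \ref{BispanqCat} is a cartesian, vertically constant functor $\cCartop{f(\overline{J})} \times \cPyr{\phi(\overline{J}),m} \to C$. The key observation is that the simplicial direction $[m]$ recording the internal ordering of $J$ is exactly the second twisted-arrow coordinate, so this datum is naturally a functor $\cCartop{f(\overline{J})} \times \cPyr{\phi(\overline{J}),J} \to C$; this is the component $\tau_{\overline{J},J}$ with $j = \overline{J}$ appearing in the statement. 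More generally, for an arbitrary index $(j,J) \in I(k)$ with $\emptyset \neq J \subset [j]$, one obtains $\tau_{j,J}$ by further restricting along the map induced by $\overline{J} \leq j$ through the structure functors of $E(f,\phi)$; the cartesian and vertically constant conditions are inherited directly from Definition \ref{BispanqCat}, giving the two stated constraints on each $\tau_{j,J}$.

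The main technical step, and the place where I expect the real work to lie, is matching the two compatibility conditions. The relative-nerve compatibility requires that for $\emptyset \neq J \subset L \subset [k]$ the square relating $\tau(J)$ and $\tau(L)$ via the morphism $F(\sigma(\overline{J} \leq \overline{L}))$ commutes; under our identification this structure map is the one induced by the active-inert data $(f_{\overline{L},\overline{J}}, \phi_{\overline{L},\overline{J}})$ acting through $\Cartop{-} \times \Pyr{\bullet,\bullet}$. On the other side, naturality of $\tau: E(f,\phi) \Rightarrow {\rm const}_C$ demands exactly that the components $\tau_{j,J}$ commute with all morphisms of the indexing category $I(k)$, which are generated by the two projections $[k]^{\rm op}$ and $\Cartne{[k]}$. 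The crux is to verify that the morphisms of $I(k)$ — namely those coming from inclusions $J \hookrightarrow L$ and from the order relation in $[k]^{\rm op}$ — correspond precisely to the relative-nerve compatibility squares together with the evident restriction maps built into the definition of $\Sp_{S,n}(C)$ as a simplicial set. I would carry this out by showing the two functors of $[k]^{\rm op} \times \Cartne{[k]}$-shaped diagrams agree on objects and generating morphisms, appealing to the naturality of the unstraightening construction (the pullback square of \cite{GHN} A.31) to control the behavior under the base projection to $N(\FinDel)$. The hard part will be bookkeeping the interaction between the $[k]^{\rm op}$ coordinate (which feeds the pointed-set and active direction via $(f,\phi)$) and the $\Cartne{[k]}$ coordinate (which supplies the twisted-arrow simplicial direction via $O$), and confirming that no extra conditions beyond cartesianness and vertical constancy are imposed.
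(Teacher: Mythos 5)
Your proposal follows essentially the same route as the paper's proof: unwind the relative-nerve formula of Eq.~\ref{RelNerve} via the semistrict presentation of Proposition \ref{BispanSS}, identify each $\tau(J)$ with a cartesian, vertically constant functor $\Cartop{f(\overline{J})}\times\Pyr{\phi(\overline{J}),J}\to C$, define $\tau_{j,J}$ for general $(j,J)\in I(k)$ by restricting along the map induced by $\overline{J}\leq j$, and match the relative-nerve compatibility squares with naturality over $I(k)$ by checking generating morphisms (the paper implements this last step with commutative cubes built from strings $J\to L\to [l]\to [j]$ in $\Cartne{[k]}$, which is exactly your check; your appeal to \cite{GHN} A.31 is superfluous but harmless). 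In all essentials this is the paper's argument.
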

\begin{proof}
 By Eq.~\ref{RelNerve} and Proposition \ref{BispanSS}, a $k$-simplex in the unstraightening of $\csmtSpan{N(C)}$ is an element $(f,\phi) \in N_k(\FinDel)$ along with, for each $J \in \Cartne{[k]}$, a cartesian and vertically constant functor
 \begin{equation*}
  \tau(J): \Cartop{f(\overline{J})} \times \Pyr{\phi(\overline{J}), J} \to C
 \end{equation*}
such that for $ J \to L \in \Cartne{[k]}$ the following diagram commutes
\begin{equation}
\label{NatNerve}
 \xymatrixrowsep{1.1pc}  \xymatrix{ \Cartop{f(\overline{L})} \times \Pyr{\phi(\overline{L}), J} \ar[r] \ar[d] & \Cartop{f(\overline{J})} \times \Pyr{\phi(\overline{J}), J} \ar[d]^-{\tau(J)} \\
 \Cartop{f(\overline{L})} \times \Pyr{\phi(\overline{L}), L} \ar[r]_-{\tau(L)} & C}
\end{equation}
where the functor along the top is induced by $\overline{J} \leq \overline{L} \in [k]$ and the functor along the left is induced by $J \subset L$. Note that for every string of $n$ composable morphisms in $\Cartne{[k]}$ one can build an analogous commutative $(n+1)$-cube based on the commutativity of diagram \ref{NatNerve} and the functoriality of $\Cartop{-}\times \Pyr{\bullet,\bullet}$. 

From this family $\{ \tau(J)\}_{J \in \Cartne{[k]}}$ we shall construct a natural transformation $\tau:E(f,\phi) \Rightarrow {\rm const}_C$. For each $(j,J) \in I(k)$ one has a morphism $J \to [j] \in \Cartne{[k]}$, and hence the diagram \ref{NatNerve} with $L$ replaced by $[j]$ commutes. We define $\tau_{j,J}$ to be the composite functor along the diagonal of this diagram, which is cartesian and vertically constant by construction. Given a morphism $(j,J) \to (l,L) \in I(k)$ there is a triple of composable morphisms 
\begin{equation*}
J \to L \to [l] \to [j] \in \Cartne{[k]}.
\end{equation*}
The naturality of $\tau$ is a consequence of the commutative $4$-cube constructed from this triple of composable morphisms.

Conversely, given a natural transformation $\tau:E(f,\phi) \Rightarrow {\rm const}_C$ having cartesian and vertically constant components, define $\tau(J) := \tau_{\overline{J}, J}$. The commutativity of diagram \ref{NatNerve} follows directly from the naturality of $\tau$. 
\end{proof}

By the universal property of colimits in $\Cat$, a natural transformation $ E(f,\phi) \Rightarrow {\rm const}_C$ is equivalently a functor $\colim E(f,\phi) \to C$. Observe that the diagonal inclusion of $I(k)$ into the full subcategory of $([k]^{\rm op})^2\times \Cartne{[k]}$ on objects $(i,j,J)$, where $ J \subset [j]$, is final. Therefore $\colim E(f,\phi)$ is isomorphic to the product of the colimits over $\Cartop{f}: [k]^{\rm op} \to \Cat$ and the diagram
 \begin{equation*}
  \xymatrixcolsep{1.1pc}\xymatrix{I(k) \ar@{^{(}->}[r] & [k]^{\rm op} \times \Cartne {[k]} \ar[rr]^-{\phi\times O} & &  \Delta^2 \ar[rr]^{\Pyr {\bullet,\bullet}} & & \Cat}.
 \end{equation*}
 It follows that 
 \begin{equation*}
 \colim E(f,\phi) \simeq \Cartop{f(0)} \times \left( \colim_{(j,J)\in I(k)} \Pyr{L(\phi)_{j,J}} \right),
 \end{equation*}
  where $L(\phi)$ is the composite
\begin{equation}
 \label{Mdiag}
 L(\phi):\xymatrixcolsep{1.1pc}\xymatrix{I(k) \ar@{^{(}->}[r] & [k]^{\rm op} \times \Cartne {[k]} \ar[rr]^-{\phi\times O} & &  \Delta^2 \ar[rr]^{\bullet \times \bullet} & & \Cat}.
\end{equation}
It follows from Corollary \ref{Twistedcolim} that functors $\colim E(f,\phi) \to C$ are equivalently functors $\Cartop{f(0)} \times \Pyr{\colim L(\phi)} \to C$. 

Our first task in this section is to compute $\colim L(\phi)$. We shall then determine conditions under which a functor $\tau: \Cartop{f(0)} \times \Pyr{\colim L(\phi)} \to C$ restricts to cartesian and vertically constant functors $\tau_{j,J}$.

\paragraph{Computing the colimit of the diagram $L(\phi)$.} Recall that the {\em Grothendieck construction} of a functor $G:D \to \Cat$ is the category having as objects
\begin{equation*}
 \left\{ (g, d) \ | \ d \in D, \ g \in G(d)\right\},
\end{equation*}
and morphisms
\begin{equation*}
 (\alpha,\delta):(g,d) \to (g',d'), \  \delta:d \to d' \ {\rm and} \ \alpha:G(\delta)(d) \to d'.
\end{equation*}
Let $M_\phi$, for $\phi\in N_k(\Delta^{\rm op})$, be the category which is the Grothendieck construction of the functor
\begin{equation*}
 \xymatrix{ [k]^{\rm op} \ar[r]^-\phi & \Delta \ar@{^{(}->}[r] & \Cat}.
\end{equation*}
Explicitly, $M_\phi$ is the poset having object set 
\begin{equation*}
 \{ (a,b) \ | \ b \in[k]^{\rm op}, a \in \phi(b)\},
\end{equation*}
and ordering defined by declaring $(a,b) \leq (a',b')$ if and only if $b \leq b' \in [k]^{\rm op}$ and $\phi_{b,b'}(a) \leq a'$. 
\begin{exam}
For $\phi \in N_k(\Delta^{\rm op})$ the constant map on $[n]$, the poset $M_\phi$ is  $[n]\times[k]^\op$.
\end{exam}
\begin{exam}
 For the unique active morphism $\phi = ([2] \actmorL [1]) \in N_1(\Delta^{\rm op})$, the poset $M_\phi$ is
\begin{equation*}
 \xymatrixrowsep{.8pc}\xymatrixcolsep{1.1pc} \xymatrix{(0,1) \ar[d] \ar[rr] & & (1,1) \ar[d] \\
 (0,0) \ar[r] & (1,0) \ar[r] & (2,0)}
\end{equation*}
\end{exam}

\begin{lem}
\label{Mlem}
 The poset $M_\phi$ is the colimit of the diagram $L(\phi)$.
\end{lem}
\begin{proof}
 As the category $M_\phi$ is obtained via the Grothendieck construction it is the colimit of the diagram
 \begin{equation*}
  \xymatrixcolsep{1.1pc}\xymatrix{ \Pyr{([k]^{\rm op})} \ar[r] & [k]^{\rm op} \times [k] \ar[rr]^-{\phi \times [k]^{\rm op}_{-/}}&  & \Cat}. 
 \end{equation*}
 Observe that, since $[k]^{\rm op}_{i/} = [i]^{\rm op} \simeq [i]$, one can obtain this diagram by precomposing the diagram in Eq.~\ref{Mdiag} with the functor
 \begin{equation*}
  F: \Pyr{([k]^{\rm op})} \to I(k), \quad [j;i] \mapsto (j,[i]). 
 \end{equation*}
The functor $F$ is final, as for any $(j,J) \in I(k)$ with $\overline{J} = {\rm max} \, J$, the category $F^{(j,J)/}$ is the full subcategory of $\Pyr{([k]^{\rm op})}$ on those objects
\begin{equation*}
 \{[a;b] \ | \ \overline{J} \leq b \leq a \leq j\},
\end{equation*}
which is non-empty and connected.
\end{proof}

\paragraph{The cartesian and vertical constancy conditions.} Having determined that $\colim L(\phi) = M_\phi$ we shall now define conditions under which a functor $\tau: \Cartop{f(0)}\times\Pyr{M_\phi} \to C$ induces cartesian and vertically constant functors $\tau_{j,J}: \Cartop{f(j)} \times \Pyr{\phi(j),J} \to C$ for each $(j,J) \in I(k)$.

\begin{defn}
 Define $V_\phi$ to be the sub-poset of $M_\phi$ on those morphisms $(a,b) \to (\phi_{b,b'}(a),b')$ and $\Wedge {M_\phi}$ to be the full sub-poset of $\Pyr {M_\phi}$ on those intervals $[(a,b);(\phi_{b,b'}(a'),b')]$ satisfying $|b'-b| \leq 1$ and $|a'-a| \leq 1$.
\end{defn}

\begin{exam}
 For $\phi$ the constant map on $[n]$, one has that $V_\phi\simeq V_{n,k}$ and $\Wedge{M_\phi} \simeq \Wedge {n,k}$.
 \end{exam}
 
 \begin{exam}
  \label{Vphiact}
  For the unique active morphism $\phi = ([2] \actmorL [1]) \in N_1(\Delta^{\rm op})$, the poset $V_\phi$ is
\begin{equation*}
 \xymatrixrowsep{.8pc}\xymatrixcolsep{1.1pc} \xymatrix{(0,1) \ar[d]  & & (1,1) \ar[d] \\
 (0,0)  & (1,0)  & (2,0)}
\end{equation*}
and the poset $\Wedge {M_\phi}$ is
\begin{equation*}
 \xymatrixcolsep{.75pc}\xymatrixrowsep{.8pc} \xymatrix{[(0,1);(0,1)] & & \ar[ll] [(0,1);(1,1)] \ar[rr] & & [(1,1);(1,1)] \\
 [(0,1);(0,0)] \ar[u] \ar[d] & & [(0,1);(2,0)] \ar[ll] \ar[u] \ar[rr] \ar[rd] \ar[ld] & & [(1,1);(2,0)] \ar[u] \ar[d] \\
 [(0,0);(0,0)] & \ar[l] [(0,0);(1,0)] \ar[r] & [(1,0);(1,0)] & \ar[l] [(1,0);(2,0)] \ar[r] & [(2,0);(2,0)] }
\end{equation*}
\end{exam}

\begin{defn}
 Let $C$ be a category with finite limits and $(f,\phi)\in N_k(\FinDel)$. Then we say a functor $\tau: \Cartop{f(0)}\times \Pyr {M_\phi} \to C$ is:
 \begin{enumerate}
  \item {\em cartesian} if it is the right Kan extension of its restriction to $\Singop{f(0)} \times \Wedge{M_\phi}$.
  \item {\em vertically constant} if morphisms of the form $(\id, v)$ for $v \in \Pyr{V_\phi}$ are sent to isomorphisms.
 \end{enumerate}
\end{defn}
\begin{remark}
 When $(f,\phi) \in N_k(\FinDel)$ is the constant map on $(S_*, [n])$ this definition reduces to the notions already introduced for functors $\Cartop{S}\times\Pyr{n,k} \to C$.
\end{remark}

Since the categories $M_\phi$ for various $\phi \in N_k(\Delta^{\rm op})$ are obtained by the Grothendieck construction they are compatible in the following sense. For a natural transformation $\eta: \phi' \Rightarrow \phi$ one has a functor 
\begin{equation*}
M(\eta): M_{\phi'} \to M_{\phi}, \ (a,b) \mapsto (\eta_b(a), b)
\end{equation*}
and for a morphism $\gamma: [n] \to [k]$ there is a functor 
\begin{equation*}
M(\gamma): M_{\phi \gamma} \to M_{\phi}, \ (a,b) \mapsto (a, \gamma(b)).
\end{equation*}

\begin{lem}
\label{UnstrLem}
 Let $C$ be an category having finite limits, $\phi \in N_k(\Delta^{\rm op})$ and $F:\Pyr{M_\phi} \to C$ a cartesian functor. Then for a natural transformation $\eta: \phi' \Rightarrow \phi$ and a morphism $\gamma:[n] \to [k]$, the composite functors
 \begin{equation*}
  \xymatrixcolsep{.7pc} \xymatrix{ \Pyr{M_{\phi'}} \ar[rr]^-{M(\eta)} & & \Pyr{M_\phi} \ar[rr]^-F & & C & {\rm and} & \Pyr{M_{ \phi\gamma }} \ar[rr]^-{M(\gamma)} & & \Pyr{M_\phi} \ar[rr]^-F & & C}
 \end{equation*}
are cartesian. 
\end{lem}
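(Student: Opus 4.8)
The plan is to unwind the definition of \emph{cartesian} as a pointwise right Kan extension and then reduce the statement to a cofinality comparison of limits. The first observation is that both $M(\eta)$ and $M(\gamma)$ are monotone maps of posets: for $M(\eta)$ this is exactly the naturality of $\eta$, since $\phi_{b,b'}(\eta_b(a)) = \eta_{b'}(\phi'_{b,b'}(a))$, and for $M(\gamma)$ it is the monotonicity of $\gamma$ together with functoriality of the Grothendieck construction. It therefore suffices to treat a single monotone map $G\colon M \to M_\phi$, where $M$ is $M_{\phi'}$ or $M_{\phi\gamma}$, and to show that $(\Pyr G)^*$ carries cartesian functors to cartesian functors. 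Writing $\iota\colon \Wedge{M_\phi} \hookrightarrow \Pyr{M_\phi}$ and $\iota'\colon \Wedge{M} \hookrightarrow \Pyr{M}$ for the inclusions of the short-interval subposets, the hypothesis that $F$ is cartesian reads $F \cong \mathrm{Ran}_{\iota}(F|_{\Wedge{M_\phi}})$, and the goal becomes $F \circ \Pyr G \cong \mathrm{Ran}_{\iota'}\big((F\circ \Pyr G)|_{\Wedge M}\big)$.

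First I would record the explicit pointwise formula. For an interval $[x;y] \in \Pyr M$ the comma category $[x;y]\downarrow\iota'$ is the poset of short subintervals of $[x;y]$, so the right-hand side evaluated at $[x;y]$ is $\lim_{[u;v]\subseteq[x;y]} F(\Pyr G[u;v])$, the limit taken over short $[u;v]$. Applying the cartesianness of $F$ to the interval $\Pyr G[x;y] = [Gx;Gy]$ gives $F[Gx;Gy] \cong \lim_{\rho} F(\rho)$ with $\rho$ ranging over short subintervals of $[Gx;Gy]$. The lemma thus amounts to the agreement of these two limits. I would prove this by invoking cartesianness a second time, now on each $F(\Pyr G[u;v]) = F[Gu;Gv]$, which rewrites the first limit as an iterated limit $\lim_{[u;v]}\lim_{\rho\subseteq[Gu;Gv]} F(\rho)$; this is a single limit over the Grothendieck construction of the functor sending a short $[u;v]\subseteq[x;y]$ to the poset of short $\rho\subseteq[Gu;Gv]$. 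There is an evident comparison functor $P$ from this Grothendieck construction to the poset of short $\rho\subseteq[Gx;Gy]$, remembering only $\rho$, and the claim reduces to $P$ being initial, i.e. to each relevant slice $P\downarrow\rho$ being nonempty and connected.

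The verification that $P$ is initial is the main obstacle, and it is here that the combinatorics of the posets $M_\phi$ and the precise behaviour of $G$ enter. The two phenomena to control are the two elementary ways a monotone map of finite ordinals can act on an edge: a \emph{face}-type jump, where $G$ sends an edge of $M$ to a longer interval of $M_\phi$, and a \emph{degeneracy}-type collapse, where $G$ sends an edge to a point. In the face case, nonemptiness and connectedness of $P\downarrow\rho$ reduce to the fact that a short $\rho\subseteq[Gx;Gy]$ is contained in $[Gu;Gv]$ for a connected family of short edges $[u;v]\subseteq[x;y]$; concretely this is the statement that an iterated pullback may be reassociated along a refinement of its subdivision, i.e. the pasting lemma for pullbacks. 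In the degeneracy case the collapsed edges contribute identity spans, the corresponding pullbacks are along identities and hence contribute nothing, which is precisely the nonemptiness of the slice together with the contraction of its degenerate part.

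Finally, since $M(\eta)$ acts fibrewise over the base $[k]^{\mathrm{op}}$, moving only the fibre coordinate via the $\eta_b$, while $M(\gamma)$ is a base change along $\gamma$ moving only the base coordinate, in each case $G$ alters only one of the two coordinates of $M_\phi$. I would therefore check initiality one coordinate at a time and then assemble the mixed pullback squares using the two-dimensional pasting lemma; note that the vertical-constancy hypothesis plays no role here, as the statement concerns only cartesianness. Once both $G = M(\eta)$ and $G = M(\gamma)$ are handled, the two composite statements of the lemma are instances of the single monotone-map assertion, completing the proof.
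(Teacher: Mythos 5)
Your overall skeleton is sound and in fact parallels the paper's: unwind ``cartesian'' via the pointwise right Kan extension formula, and reduce the lemma to the initiality (cofinality for limits) of a comparison functor between index posets, whose slices must be shown non-empty and connected. The difference is only in how the comparison is set up: the paper applies cartesianness of $F$ once and enlarges $\Wedge{M_\phi}$ to a full subcategory $\omega(\eta)\subseteq \Pyr{M_\phi}$ containing the image of $\Wedge{M_{\phi'}}$, then shows the induced functor on slices is initial, whereas you apply cartesianness twice and compare a Grothendieck construction with the poset of short subintervals. That variant is legitimate, and your remark that vertical constancy plays no role is correct.

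The genuine gap is that the initiality of $P$ --- which is the entire content of the lemma, and to which the paper devotes its own technical subsection with the explicit elements $\hat{c}$, $\hat{c}'$, $\hat{d}$, $\hat{d}'$ and a two-case analysis --- is never actually proved. Two concrete problems. First, initiality of $P$ is a statement about finite posets alone, independent of $C$ and of $F$; it cannot be deduced from ``the pasting lemma for pullbacks'', which is a statement about diagrams in $C$ and is a \emph{consequence} of the cofinality you need, not a proof of it. Second, the face/degeneracy dichotomy ``for monotone maps of finite ordinals'' and the plan to ``check initiality one coordinate at a time'' cannot work as stated, because the assertion is false for a general monotone map into $M_\phi$: take $\phi$ constant at $[2]$ over base $[1]$, so $M_\phi=[2]\times[1]^{\rm op}$ and short intervals are those of horizontal width at most $1$, and let $G:[2]\to M_\phi$ be the staircase $0\mapsto(0,1)$, $1\mapsto(1,0)$, $2\mapsto(2,0)$. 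Cartesianness of $F$ forces $F[(0,1);(2,0)]\simeq F[(0,1);(1,0)]\times_{F[(1,1);(1,0)]}F[(1,1);(2,0)]$, whereas cartesianness of $F\circ\Pyr{G}$ would force $F[(0,1);(2,0)]\simeq F[(0,1);(1,0)]\times_{F[(1,0);(1,0)]}F[(1,0);(2,0)]$; since the values of a cartesian functor on short intervals are completely unconstrained, these two pullbacks differ in general. So any correct proof must use precisely what distinguishes $M(\eta)$ and $M(\gamma)$ from such staircases --- the former preserves the base coordinate, the latter the fibre coordinate --- and must control the interaction between the fibre maps $\eta_d$, $\eta_{d'}$ and the transition maps $\phi'_{d,d'}$ across base levels; this is exactly what the paper's definitions of $\hat{c}$, $\hat{c}'$ and the case split $\hat{c}\leq\hat{c}'$ versus $\hat{c}'\leq\hat{c}$ accomplish. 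Your proposal stops where that work begins.
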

 The proof of this Lemma is somewhat technical and the details are unnecessary for the remainder of the text. We therefore separate its proof into Section \ref{ProofUnstr}.

\begin{prop}
\label{UnstrSimps}
Let $C$ be a category with finite limits, and let $\Unstr C_k$ be the set
\begin{equation*}
 \left\{ \left((f,\phi)\in N_k(\FinDel), \, \tau: \Cartop{f(0)}\times\Pyr{M_\phi} \to C\right) \ | \ \tau \ {\rm cartesian}, \ {\rm vertically} \  {\rm constant}\right\}.
\end{equation*}
Then the sets $\Unstr C_k$ assemble into a sub simplicial set of the unstraightening of $\csmtSpan {N(C)}$.
\end{prop}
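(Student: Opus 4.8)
The plan is to exhibit, for each $k$, the set $\Unstr{C}_k$ as a subset of the set of $k$-simplices of the unstraightening of $\csmtSpan{N(C)}$ computed above, and then to check that these inclusions are compatible with the simplicial operators. The underlying bijection of \emph{data} is already available: the $k$-simplices of the unstraightening are pairs $\left((f,\phi), \tau: E(f,\phi) \Rightarrow {\rm const}_C\right)$ with each component $\tau_{j,J}$ cartesian and vertically constant, and by the colimit computation $\colim E(f,\phi) \simeq \Cartop{f(0)} \times \Pyr{M_\phi}$ (Lemma \ref{Mlem} together with Corollary \ref{Twistedcolim}) such a natural transformation $\tau$, \emph{forgetting all conditions}, is exactly a functor $\Cartop{f(0)} \times \Pyr{M_\phi} \to C$. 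Thus the only remaining tasks are to match the cartesian and vertically constant conditions and to verify stability under faces and degeneracies.

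First I would verify the forward implication, namely that if $\tau:\Cartop{f(0)}\times\Pyr{M_\phi}\to C$ is cartesian and vertically constant in the global sense of the preceding definition, then each of its components $\tau_{j,J}:\Cartop{f(j)}\times\Pyr{\phi(j),J}\to C$ is cartesian and vertically constant; this identifies $\Unstr{C}_k$ with a subset of the unstraightening simplices. The component $\tau_{j,J}$ is the restriction of $\tau$ along the colimit cocone $\Cartop{f(j)}\times\Pyr{\phi(j),J}\to\Cartop{f(0)}\times\Pyr{M_\phi}$, whose twisted-arrow factor is $\Pyr{-}$ applied to the canonical map $\phi(j)\times J\to M_\phi$. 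Factoring this map as a composite of the functors $M(\eta)$ and $M(\gamma)$, cartesianness of the twisted-arrow part of $\tau_{j,J}$ follows from Lemma \ref{UnstrLem}. Vertical constancy is preserved because these functors send morphisms of $V_\phi$-type to morphisms of $V_\phi$-type, and the compatibility with the $\Cartop{-}$ factor is the standard functoriality in $\Finpt$ already underlying the construction of $\csmtSpan{N(C)}$; both points are checked by direct inspection of the definitions of $M_\phi$, $V_\phi$ and $\Wedge{M_\phi}$.

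Next I would show the inclusions assemble into a map of simplicial sets, i.e. that the simplicial operators carry $\Unstr{C}$ to itself. A simplicial operator $\alpha:[k']\to[k]$ acts on a pair $((f,\phi),\tau)$ by precomposing $(f,\phi)$ with $\alpha$, which remains in $N_\bullet(\FinDel)$, and by precomposing $\tau$ with $\Pyr{M(\alpha)}$ on the twisted-arrow factor together with the evident $\Cartop{-}$-map induced by the pointed map $f(0)\to f(\alpha(0))$ on the first factor. Lemma \ref{UnstrLem} (with $\gamma=\alpha$) again guarantees that the twisted-arrow precomposition preserves the cartesian condition, while vertical constancy is preserved since $M(\alpha)$ sends $V_{\phi\alpha}$-morphisms into $V_\phi$-morphisms, and preservation on the $\Cartop{-}$ factor is again the functoriality in $\Finpt$. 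Hence every face and degeneracy of a simplex in $\Unstr{C}_k$ again lies in $\Unstr{C}_{k'}$, and since the inclusions of the previous step are by construction compatible with these operators, $\Unstr{C}_\bullet$ is a sub simplicial set.

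The step I expect to be the main obstacle is the explicit identification of the component-restriction functors and of the simplicial operators with composites of the maps $M(\eta)$ and $M(\gamma)$, so that Lemma \ref{UnstrLem} becomes applicable; this is where the combinatorics of the Grothendieck construction $M_\phi$ must be handled with care. A secondary subtlety is that Lemma \ref{UnstrLem} speaks only to cartesianness, so the preservation of vertical constancy must be argued separately and directly from the definition of $V_\phi$.
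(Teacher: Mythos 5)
Your proposal is correct and follows essentially the same route as the paper's proof: both reduce everything to Lemma \ref{UnstrLem} by factoring the component restrictions $\tau_{j,J}$ through maps of the form $M(\eta)$ and $M(\gamma)$ (the paper's $\psi_{j,J} = M(\rho)\circ M(\eta)$ up to isomorphism), handle vertical constancy by direct inspection of where $V$-type morphisms are sent, and treat stability under simplicial operators as the case $\gamma = \alpha$ of the same lemma. The only difference is the order of the two verifications, which is immaterial.
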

\begin{remark}
 In general, $\Unstr C$ is a proper sub simplicial set of the unstraightening. This will nonetheless suffice for our purposes as we will make use of this explicit form to map {\em into} the unstraightening. 
\end{remark}
\begin{proof}
 To begin, we observe that by Lemma \ref{UnstrLem}, if $\tau$ is cartesian then so is $\gamma^*\tau$ for any $\gamma:[n] \to [k]$, where
 \begin{equation*}
  \gamma^* \tau: \xymatrixcolsep{1.5pc}\xymatrix{\Cartop{f\gamma(0)}\times\Pyr{M_{\phi\gamma}} \ar[rrr]^-{\Cartop{f_{\gamma(0),0}}\times\Pyr{M(\gamma)}}& & &  \Cartop{f(0)}\times\Pyr{M_\phi} \ar[r]^-\tau & C}.
 \end{equation*}
 Furthermore, since $M(\gamma)$ maps $V_{ \phi\gamma}$ to $V_\phi$, the functor $\gamma^*\tau$ is vertically constant whenever $\tau$ is. Therefore, the sets $\Unstr C_k$ assemble into a simplicial set.
 
 By Lemma \ref{Mlem} the set of $k$-simplices of the unstraightening of $\csmtSpan{N(C)}$ consists of pairs
 \begin{equation*}
  \left\{ \left((f,\phi)\in N_k(\FinDel), \, \tau: \Cartop{f(0)}\times\Pyr{M_\phi} \to C\right)\right\}
 \end{equation*}
such that the induced functors $\tau_{j,J}$ are cartesian and vertically constant. To show that $\Unstr C$ is a sub simplicial set it therefore suffices to show that if $\tau$ is cartesian and vertically constant than so are the functors $\tau_{j,J}$. For the remainder of this proof we shall fix a cartesian, vertically constant functor $\tau$. 

Observe that $\phi(j) \times J^{\rm op}$ is $M_{c_{\phi(j)}}$, where $c_{\phi(j)}: J^{\rm op} \to \Cat$ is the constant functor on $\phi(j)$, and that one has a diagram
\begin{equation*}
\xymatrix{ J^{\rm op} \ar@{^{(}->}[d]_-\rho \ar[rd]^-{c_{\phi(j)}}& \ar@{}[ld]^(.5){}="a"^(.85){}="b" \ar@{=>}_\eta "a";"b" \\
 [k]^{\rm op} \ar[r]_-\phi & \Cat \, ,}
\end{equation*}
where $\eta$ is the natural transformation having components $\eta_i=\phi_{j,i}: \phi(j) \to \phi(i)$. Letting $\psi_{j,J}$ denote the composite functor
\begin{equation*}
 \psi_{j,J}:\xymatrixcolsep{.7pc}\xymatrix{ \phi(j) \times J \ar[r]^-\sim & M_{c_{\phi(j)}} \ar[rr]^-{M(\eta)} & & M_{\phi \rho} \ar[rr]^-{M(\rho)} & & M_\phi}, 
\end{equation*}
one has that $\tau_{j,J}$ is
\begin{equation*}
 \xymatrixcolsep{2.5pc} \xymatrix {\Cartop{f(j)}\times\Pyr{\phi(j),J} \ar[rr]^-{\Cartop{f_{j,0}} \times \Pyr{\psi_{j,J}}} & &  \Cartop{f(0)}\times\Pyr{M_\phi} \ar[r]^-\tau & C}.
\end{equation*}
It follows from Lemma \ref{UnstrLem} that $\tau_{j,J}$ is cartesian. The vertically constancy of $\tau_{j,J}$ follows from noting that $M(\eta)$ sends $V_{c_{\phi(j)}}$ to $V_{\phi \rho}$ and $M(\rho)$ sends $V_{\phi \rho}$ to $V_\phi$.
\end{proof}

\subsubsection{Proof of Lemma \ref{UnstrLem}}
\label{ProofUnstr}

Throughout this subsection we shall fix $\phi, \phi' \in N_k(\Delta^{\rm op})$, a natural transformation $\eta: \phi' \Rightarrow \phi$, a morphism $\gamma:[n] \to [k]$ and a cartesian functor $F: \Pyr{M_\phi} \to C$. We shall also make use of some notational shorthands. Set $\alpha= \Wedge{M_{\phi'}}$, $A=\Pyr{M_{\phi'}}$, $\theta = \Wedge{M_{\phi\gamma}}$, $\Theta=\Pyr{M_{\phi \gamma}}$, $\omega = \Wedge{M_\phi}$, $\Omega = \Pyr{M_\phi}$, $\xi=\Pyr{M(\eta)}$ and $\zeta=\Pyr{M(\gamma)}$.

We first show that $F \circ \xi$ is cartesian. Since $F$ is cartesian, for each $x \in A$, the object $F\xi(x)$ is the limit of the diagram
\begin{equation*}
 \xymatrix{ \omega(\eta)^{\xi(x)/} \ar@{^{(}->}[r] & \Omega \ar[r]^-F & C},
\end{equation*}
where $\omega(\eta)$ is the full subcategory of $\Omega$ containing $\omega$ as well as those objects $p$ such that $p \geq \xi(q)$ for some $q \in \alpha$. On the other hand, the right Kan extension of the restriction of $F \xi$ to $\alpha$ evaluated at $x \in A$ is the limit of the diagram
\begin{equation*}
 \xymatrix{ \alpha^{x/} \ar[r]^-{\xi^{x/}} & \omega(\eta)^{\xi(x)/} \ar@{^{(}->}[r] & \Omega \ar[r]^-F & C}.
\end{equation*}
To show that $F \xi$ is cartesian it therefore suffices to show that for each $x \in A$, the functor $\xi^{x/}$ is initial (\cite{MacLane} IX.3). That is, we must show that for each $y \in \omega(\eta)^{\xi(x)/}$, the poset
\begin{equation*}
 \xi^{x/}_{/y} = \left\{ z \in \alpha \ | \ z \geq x, \ \xi(z) \leq y \right\}
\end{equation*}
is non-empty and connected. 

Letting $y=[(c,d);(c',d')]\in \omega(\eta)^{\xi(x)/}$ we define the following
\begin{equation*}
 \hat{c} = \max \{i \in \phi'(d) \ | \ \eta_d(i) \leq c\} \ {\rm and} \ \hat{c}' = \min \{i \in \phi'(d) \ | \ \eta_{d'}(\phi'_{d,d'}(i)) \geq c'\}. 
\end{equation*}
There are a now two cases to consider. First, if $\hat{c} \leq \hat{c}'$ then $\hat{y} = [(\hat{c},d); (\phi_{d,d'}(\hat{c}'),d')]$ is the maximal element of $\alpha$ satisfying $\xi(\hat{y})\leq y$. Therefore $\xi^{x/}_{/y} = \alpha^{x/}_{/\hat{y}}$ and so $\xi^{x/}_{/y}$ is non-empty and connected as it has terminal object $\hat{y}$. Second, if $\hat{c}' \leq \hat{c}$ then from the inequalities
\begin{equation*}
 \phi_{d,d'}(c) \leq c' \leq \phi_{d,d'}\left(\eta_d(\hat{c}')\right) \leq \phi_{d,d'}\left( \eta_d(\hat{c})\right) \leq \phi_{d,d'}(c)
\end{equation*}
one concludes that $c'=\phi_{d,d'}(c)$, $\eta_d(\hat{c}') = \eta_d(\hat{c}) = c$, and $[\hat{c}';\hat{c}]$ is the largest subinterval of $\phi'(d)$ sent to $[c;c]$ under $\eta_d$. The poset $\xi^{x/}_{/y}$ is non-empty as it contains $[(\hat{c},d);(\phi'_{d,d'}(\hat{c}),d']$. Letting $[p;q]$ denote the maximal subinterval of $[\hat{c}';\hat{c}]$ such that $x \leq [(p,d);(q,d)]$, one has that for any $z \in \xi^{x/}_{/y}$ there is at least one $r \in [p;q]$ such that $z \leq [(r,d);(r,d)]$. As the poset $\xi^{x/}_{/y}$ contains the connected sub-poset 
\begin{equation*}
 \{[(u,d);(v,d)] \ | \ [u;v] \subset [p;q], \, |u-v|\leq 1\}
\end{equation*}
and each element maps into this sub-poset it follows that $\xi^{x/}_{/y}$ is connected.

Next, we will show that $F \circ \zeta$ is cartesian. The argument is quite similar to the above, and we shall recycle certain notation. It suffices to show that for each $x \in \Theta$ and $y \in \omega(\gamma)^{\zeta(x)/}$, the poset
\begin{equation*}
 \zeta^{x/}_{/y} = \left\{ z \in \theta \ | \ z \geq x, \ \zeta(z) \leq y \right\},
\end{equation*}
is non-empty and connected, where $\omega(\gamma)$ is full subcategory of $\Omega$ containing $\omega$ and everything above the image of $\theta$ under $\zeta$. Set $y = [(c,d);(c',d')] \in \omega(\gamma)^{\xi(x)/}$ and
\begin{equation*}
 \hat{d} = \max \{i \in [n]^{\rm op} \ | \gamma(i) \leq d \in [k]^{\rm op}\} \ {\rm and} \ \hat{d}' = \min \{i \in [n]^{\rm op} \ | \ \gamma(i) \geq d' \in [k]^{\rm op}\}.
\end{equation*}
There are again two cases. The first is when $\hat{d} \leq \hat{d}'$, then $\hat{y} = [(c,\hat{d});(c',\hat{d}')]$ is the maximal element of $\theta$ satisfying $\zeta(\hat{y})\leq y$, proving as above that $\zeta^{x/}_{/y}$ is non-empty and connected. Next, if $\hat{d}' \leq \hat{d}$ then $d'=d$ and $[\hat{d}';\hat{d}]$ is the largest subinterval of $[n]^{\rm op}$ sent to $[d;d]$. The same analysis as above, mutatis mutundi, shows that $\zeta^{x/}_{/y}$ is non-empty and connected.

\section{Simplicial objects define lax algebras in \texorpdfstring{$\csmtSpan{\cC}$}{Span(C)}}
\label{Sec:multialg}

In this section we present the main results of this paper. We first, in Section \ref{Sec:algcomb}, explicitly construct a symmetric monoidal lax functor
\begin{equation*}
 \xymatrix{\csmAlg \ar@{~>}[r]^-\alpha & \csmtSpan{\csSetfop}}, 
\end{equation*}
endowing the standard $1$-simplex $\Simplex{1}$ with the structure of a lax algebra. Then, in Section \ref{Sec:intospaces}, we show how the object of $1$-simplices $\cX_1$ of a simplicial object $\cX \in \cC_\Simp$ in an $\infty$-category having finite limits inherits the same structure from the universal property of $\Simplex{\bullet}$. Finally, we show in Section \ref{Sec:SegAssoc} that $\cX$ satisfies the $2$-Segal condition if and only if it inherits an algebra structure from $\Simplex{\bullet}$.

\subsection{The lax algebra structure on \texorpdfstring{$\Simplex 1$}{the standard 1-simplex}}
\label{Sec:algcomb}

Our first step in the construction of the lax algebra coming from a general simplicial object $\cX_\bullet \in \cC_\Simp$ will be to carry out the construction for the initial simplicial object $\Simplex{\bullet} \in \csSetfop$. According to Definitions \ref{DefnsmLax} and \ref{DefnLaxAlg}, this amounts to constructing a morphism of fibrations
\begin{equation*}
 \xymatrixrowsep{.8pc}\xymatrixcolsep{.9pc}\xymatrix{\Un(\csmAlg)\ar[rr]^-{\alpha} \ar[rd] & & \Un\left(\csmtSpan{\csSetfop}\right)\ar[ld] \\
  & \cFinDel & }
\end{equation*}
preserving cocartesian lifts of morphisms of the form $(f,\phi) \in \cFinpt\times(\Simpin)^{\rm op}$. Furthermore, the image of the object $\underline{1}$ in the fibre over $(\underline{1}_\ast,[0])$ must be $\Simplex 1$.

Before diving into the detailed construction of $\alpha$, let us first give an informal description. On objects, the lax functor $\alpha$ is simply 
\begin{equation*}
\alpha:X\mapsto X\cdot \Simplex 1 = \coprod_{x \in X} \Simplex 1.
\end{equation*}
Recall that a morphism in $\csmAlg$ is a function $p:X \to Y$ along with a linear ordering of $p^{-1}(y)$ for each $y \in Y$. The lax functor $\alpha$ sends the morphism $p$ to the morphism $\alpha(p) \in \csmtSpan{\csSetfop}$ given by the diagram
\begin{equation*}
 \xymatrixrowsep{.8pc}\xymatrix{ & \displaystyle \coprod_{y \in Y} \Simplex{|p^{-1}(y)|} & \\
 X\cdot \Simplex 1 \ar[ru]^-{\sigma} & & \ar[lu]_-{\lambda} Y\cdot \Simplex 1 \, ,}
\end{equation*}
The morphism $\lambda$ sends the $1$-simplex associated to the element $y \in Y$ to the long edge of the standard simplex $\Simplex{|p^{-1}(y)|}$. Note that one can label the edges along the spine of $\Simplex{|p^{-1}(y)|}$ by the elements of $p^{-1}(y)$ using the linear ordering. The morphism $\sigma$ sends the $1$-simplex associated to $x \in X$ to the appropriate edge along the spine of $\Simplex{|p^{-1}(p(x))|}$. 

\begin{exam}
For the morphism $m:\underline{2} \to \underline{1}$, the morphism $\alpha(m)$ in $\csmtSpan{\csSetfop}$ is given by the diagram in Eq.~\ref{DeltaProd}.
\end{exam}
\begin{exam}
  Consider the morphism $(m \amalg \id):\underline{3} = \underline{2} \amalg \underline{1} \to \underline{1} \amalg \underline{1} = \underline{2}$. Then $\alpha(m \amalg \id)$ is given by the diagram
  \begin{equation*}
   \includegraphics[height=1.45cm]{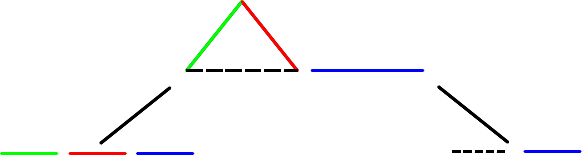} 
  \end{equation*}
\end{exam}

The lax structure on the functor $\alpha$ is given by associating to each pair of composable morphisms $\xymatrixcolsep{1.1pc}\xymatrix{X_0 \ar[r]^-{p_1} & X_1 \ar[r]^-{p_2} & X_2}$ in $\csmAlg$ a $2$-morphism in $\csmtSpan{\csSetfop}$ of the form
\begin{equation*}
 \xymatrixrowsep{.8pc}\xymatrix{ & \alpha(p_2p_1) \ar@{=}[d] & \\
 \alpha(X_0) \ar[ru] \ar[rd] \ar[r] & \alpha(p_2p_1) & \alpha(X_2) \ . \ar[lu] \ar[ld] \ar[l] & \\
  & \alpha(p_2) \coprod_{\alpha(X_1)} \alpha(p_1) \ar[u] & }
\end{equation*}

\begin{exam}
Consider the pair of composable morphisms $\xymatrixcolsep{1.7pc}\xymatrix{\underline{3} \ar[r]^-{m \amalg \id} & \underline{2} \ar[r]^-{m} & \underline{1}}$. The lax structure on $\alpha$ is given by the diagram
\begin{equation*}
 \includegraphics[height=4.5cm]{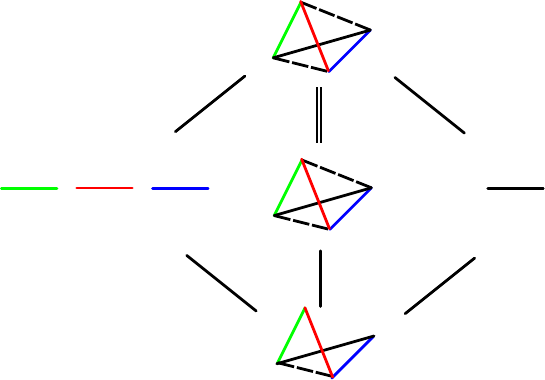} 
\end{equation*}
\end{exam}

\paragraph{Outline of the construction.} Recall from Section \ref{Sec:algprop} that $\csmAlg$ is semistrict by construction, and so it is straightforward to determine from Eq.~\ref{RelNerve} that its unstraightening is given by
\begin{equation*}
 \Un(\csmAlg)_k = \left\{ \left( (f,\phi)\in N_k(\FinDel), \ \theta:\Cart{f(0)}\times\phi(0) \to \Alg\right) \ | \ \theta \ {\rm cocartesian}\right\}.
\end{equation*}
From such data we will build a cartesian and vertically constant functor 
\begin{equation*}
\alpha(\theta):\Cartop{f(0)}\times \Pyr{M_\phi} \to \sSetfop.
\end{equation*}
By Proposition \ref{UnstrSimps} this defines a $k$-simplex in the unstraightening of $\csmtSpan{\csSetfop}$. To better understand the approach we take to the construction of $\alpha(\theta)$ it will be instructive to describe a special case.

Consider $\left((f,\phi),\theta\right) \in \Un(\csmAlg)_1$ where $f$ is constant on $\underline{1}_\ast$ and $\phi:[2] \actmorL [1]$ is the unique active morphism. Then the functor $\theta$ is a pair of composable morphisms
\begin{equation*}
\xymatrixcolsep{1.1pc}\xymatrix{X_0 \ar[r]^-{p_1} & X_1 \ar[r]^-{p_2} & X_2}. 
 \end{equation*}
 Since $\alpha(\theta)$ is cartesian it is determined by its restriction to $\Wedge{M_\phi}$, the diagram described in Example \ref{Vphiact}. The restriction of $\alpha(\theta)$ to $\Wedge{M_\phi}$ is
 \begin{equation*}
  \xymatrixrowsep{.8pc} \xymatrixcolsep{1.1pc} \xymatrix{\alpha(X_0) \ar[rr] \ar@{=}[d]  & & \alpha(p_2p_1) \ar@{=}[d] & & \alpha(X_2) \ar[ll] \ar@{=}[d] \\
  \alpha(X_0) \ar[rr] & & \alpha(p_2p_1) & & \alpha(X_2)\ar[ll] \\
  \alpha(X_0) \ar@{=}[u] \ar[r] & \alpha(p_1) \ar[ru] & \alpha(X_1) \ar[l] \ar[r] & \alpha(p_2) \ar[lu] & \alpha(X_2) \ar[l] \ar@{=}[u] }
 \end{equation*}
 Observe that all of the data in this diagram can be obtained from the following diagram
 \begin{equation}
 \label{alphabarexam}
  \xymatrixrowsep{.7pc}\xymatrixcolsep{1.2pc} \xymatrix{ & & \alpha(p_2 p_1) & & \\
  & \alpha(p_1) \ar[ru] & & \alpha(p_2) \ar[lu] & \\
  \alpha(X_0) \ar[ru] & & \alpha(X_1) \ar[lu] \ar[ru] & & \alpha(X_2) \ar[lu] }
 \end{equation}
which is a functor $\overline{\alpha}(\theta): \Pyr{\phi(0)} \to \sSetfop$. Specifically, the restriction of $\alpha(\theta)$ to $\Wedge{M_\phi}$ is obtained from $\overline{\alpha}(\theta)$ by restricting along a functor $\Wedge{M_\phi} \to \Pyr{\phi(0)}$.

In general, observe that for each $\phi \in N_k(\Delta^{\rm op})$ one has a natural transformation $\phi \Rightarrow c_{\phi(0)}$ having components $\phi_{i,0}$. The naturality of the Grothendieck construction implies that this natural transformation induces a functor 
\begin{equation*}
p_\phi:M_\phi \to \phi(0).
\end{equation*}
Our construction of $\alpha(\theta)$ for general $((f,\phi),\theta) \in \Un(\csmAlg)_k$ proceeds by first defining a functor
\begin{equation*}
 \overline{\alpha}(\theta):\Singop{f(0)}\times \Pyr{\phi(0)} \to \sSetfop
\end{equation*}
generalising the one in Eq.~\ref{alphabarexam}. The functor $\alpha(\theta)$ is then defined so that the following diagram is a right Kan extension
\begin{equation}
\label{RKEOutline}
 \xymatrixrowsep{.9pc}\xymatrixcolsep{2.2pc} \xymatrix{ \Cartop{f(0)}\times\Pyr{M_\phi} \ar[rrr]^-{\alpha(\theta)} & & & \sSetfop \\
 \Singop{f(0)}\times\Wedge{M_\phi} \ar@{^{(}->}[r] \ar@{^{(}->}[u] & \Singop{f(0)}\times\Pyr{M_\phi} \ar[r]_-{\id\times \Pyr{p_\phi}} & \Singop{f(0)}\times\Pyr{\phi(0)} \ar[r]_-{\overline{\alpha}(\theta)} & \sSetfop \ar@{=}[u]}
\end{equation}
The functor $\alpha(\theta)$ is manifestly cartesian, and is vertically constant as every morphism in $V_\phi$ is sent to the identity in $\phi(0)$ by $p_\phi$.

\paragraph{Construction of the functor $\overline{\alpha}(\theta)$.} By Theorem \ref{ErrOplax} it suffices to define a normal oplax functor
\begin{equation*}
 \overline{\alpha}(\theta): \Singop{f(0)}\times \phi(0) \nrightarrow \sp \sSetfop.
\end{equation*}

Recall from Eq.~\ref{nabladefn} that $\nabla$ is the category of spans of the form $\xymatrixcolsep{.7pc} \xymatrix{ \langle n \rangle & \ar[l] \langle k \rangle \ar@{>->}[r] & \langle m \rangle}$ in $\Delta_+$. The category of {\em levelwise finite nabla sets}, denoted $\Fin_\nabla$ is the category of functors $\nabla^\op \to \Fin$. From a morphism $p:X \to Y$ in $\Alg$ one can define the following diagram of levelwise finite nabla sets
\begin{equation*}
\xymatrixrowsep{.7pc} \xymatrix{ & \displaystyle \coprod_{y \in Y} \Simplexn{p^{-1}(y)} & \\
 X \ar[ru] & & \ar[lu] Y\, ,}
\end{equation*}
where $X$ and $Y$ are constant nabla sets and $\Simplexn{ n}$ is the nabla set represented by $\langle n \rangle$. The first and second map arise, respectively, from the following morphisms in $\nabla$:
\begin{equation*}
\xymatrixrowsep{.7pc} \xymatrixcolsep{.9pc} \xymatrix{ & \{ x \} \ar@{=}[ld] \ar@{>->}[rd] & & & p^{-1}(y) \ar[ld] \ar@{=}[rd] & \\
\{ x \} & & p^{-1}(p(x)) & \{ y \} & & p^{-1}(y)\, .}
\end{equation*}
 We can then apply the functor $\fG^*$ of Eq.~\ref{Augdef} to obtain a span in $\sSetfop$.

Now, define $\overline{\alpha}(\theta)$ on objects as
\begin{equation*}
 \overline{\alpha}(\theta) (s,i) = \fG^* \theta(s,i) = \theta(s,i) \cdot \Simplex 1.
\end{equation*}
To define $\overline{\alpha}(\theta)$ on a morphism $f:(s,i) \to (s,j)$ in $\Singop{f(0)}\times\phi(0)$, one applies the above construction to the morphism $\theta(f)$ in $\Alg$, yielding a span in $\sSetfop$
\begin{equation*}
 \xymatrixrowsep{.7pc} \xymatrix{ & \overline{\alpha}(\theta)(f) & \\
 \overline{\alpha}(\theta)(s,i) \ar[ru] & & \ar[lu] \overline{\alpha}(\theta)(s,j) \, . }
\end{equation*}

From a pair of composable morphisms $\xymatrixcolsep{1.2pc}\xymatrix{ X \ar[r]^-{p} & Y \ar[r]^-{q} & Z}$ in $\Alg$, one has a commutative diagram of nabla sets
\begin{equation*}
 \xymatrixrowsep{.8pc} \xymatrixcolsep{1.2pc} \xymatrix{ & & \displaystyle \coprod_{z \in Z} \Simplexn{(qp)^{-1}(z)} & & \\
 X \ar@/^1.1pc/[rru] \ar[r] & \displaystyle \coprod_{y \in Y} \Simplexn{p^{-1}(y)} \ar[ru]_-a  & Y \ar[r] \ar[l] &\ \displaystyle \coprod_{z \in Z} \Simplexn{ q^{-1}(z)}\ar[lu]^-b  & \ar[l] \ar@/_1.1pc/[llu] Z\, ,}
\end{equation*}
where $a$ and $b$ arise, respectively, from the following morphisms in $\nabla$:
\begin{equation*}
\xymatrixrowsep{.7pc} \xymatrixcolsep{1.7pc} \xymatrix{ & p^{-1}(y) \ar@{=}[ld] \ar@{>->}[rd] & & & (qp)^{-1}(z) \ar[ld]_-p \ar@{=}[rd] & \\
p^{-1}(y) & & (qp)^{-1}(q(y)) & q^{-1}(z) & & (qp)^{-1}(z) \, .}
\end{equation*}
Applying the functor $\fG^*$ of Eq.~\ref{Augdef} yields a diagram in $\sSetf$.

For a pair of composable morphisms $\xymatrixcolsep{1.2pc} \xymatrix{(s,i)\ar[r]^-f & (s,j) \ar[r]^-g & (s,k) }$ in $\Singop{f(0)}\times\phi(0)$ we define the corresponding component $\overline{A}(\theta)_{g,f}$ of the oplax structure on $\overline{\alpha}(\theta)$ as follows. Applying the construction of the preceding paragraph to the image under $\theta$ of the pair of composable morphisms one has, in particular, a commutative square
\begin{equation}
\label{OplaxComp}
 \xymatrixrowsep{.8pc} \xymatrix{ \overline{\alpha}(\theta)(s,j) \ar[r] \ar[d] & \overline{\alpha}(\theta)(g) \ar[d] \\
 \overline{\alpha}(\theta)(f) \ar[r] & \overline{\alpha}(\theta)(g \circ f) \, .}
\end{equation}
Then the component $\overline{A}(\theta)_{g,f}$ is the universal morphism
\begin{equation*}
 \overline{A}(\theta)_{g,f}: \overline{\alpha}(\theta)(g) \coprod_{\overline{\alpha}(\theta)(s,j)} \overline{\alpha}(\theta)(f) \to \overline{\alpha}(\theta)(g \circ f),
\end{equation*}
induced by the diagram in Eq.~\ref{OplaxComp}.

\begin{lem}
 For each $((f,\phi), \theta) \in \Un(\csmAlg)_k$, the above data defines a normal oplax functor and hence a functor
 \begin{equation*}
  \overline{\alpha}(\theta): \Singop{f(0)}\times \Pyr{\phi(0)} \to \sSetfop.
 \end{equation*}
\end{lem}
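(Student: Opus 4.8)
The plan is to apply Theorem~\ref{ErrOplax}. The data described above --- an object $\overline{\alpha}(\theta)(s,i)$ for each object, a span for each morphism, and a structure $2$-morphism $\overline{A}(\theta)_{g,f}$ for each composable pair --- is exactly the data of a normal oplax functor $\Singop{f(0)}\times\phi(0)\nrightarrow\sp{\sSetfop}$, so it suffices to verify the three oplax axioms. Theorem~\ref{ErrOplax} then yields a functor out of $\Pyr{\Singop{f(0)}\times\phi(0)}$; since $\Sing{f(0)}$ is a discrete poset and $\Pyr{-}$ preserves products and fixes discrete categories, this domain is canonically $\Singop{f(0)}\times\Pyr{\phi(0)}$, as required. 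Throughout one must remember that we are working in $\sSetfop$: a span in $\sp{\sSetfop}$ is a cospan in $\sSetf$, the fibre products $F(g)\times_{F(d)}F(f)$ of the axioms are computed as the pushouts of Eq.~\ref{OplaxComp} in $\sSetf$, and the maps $\overline{A}(\theta)_{g,f}$ are oriented accordingly.

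First I would dispose of normality and the unit axioms. Since $\theta$ is an honest functor it preserves identities, and the span construction sends an identity morphism of $\Alg$ --- all of whose fibres are singletons --- to an identity span; hence $\overline{\alpha}(\theta)$ preserves identity morphisms. For the unit condition $\overline{A}(\theta)_{\id,f}=\id=\overline{A}(\theta)_{f,\id}$ one observes that when one of the composable morphisms is an identity the square of Eq.~\ref{OplaxComp} has an identity edge, so the pushout collapses onto $\overline{\alpha}(\theta)(g\circ f)$ and the induced universal morphism is the identity.

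The substance of the proof is the oplax associativity coherence, Eq.~\ref{OplaxAssoc}. Fix a composable triple $c_0\xrightarrow{f}c_1\xrightarrow{g}c_2\xrightarrow{h}c_3$ in $\Singop{f(0)}\times\phi(0)$ (here $f,g,h$ name morphisms of this category, not the first component of $(f,\phi)$) whose images under $\theta$ form a string $A_0\xrightarrow{p}A_1\xrightarrow{q}A_2\xrightarrow{r}A_3$ in $\Alg$. Unwound in $\sSetf$, both sides of Eq.~\ref{OplaxAssoc} are morphisms out of the triple pushout of the spans $\overline{\alpha}(\theta)(f)$, $\overline{\alpha}(\theta)(g)$, $\overline{\alpha}(\theta)(h)$ into $\overline{\alpha}(\theta)(h\circ g\circ f)$, whose apex is $\fG^*\coprod_{a\in A_3}\Simplexn{(rqp)^{-1}(a)}$. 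By the definition of composition in $\Alg$ the linear order on the fibre over $a$ is the iterated sum
\begin{equation*}
(rqp)^{-1}(a)=\sum_{a_2\in r^{-1}(a)}\ \sum_{a_1\in q^{-1}(a_2)}p^{-1}(a_1),
\end{equation*}
formed with the monoidal structure on $\Delta_+$ of Eq.~\ref{AugMon}. This sum is \emph{strictly} associative, being concatenation of finite linear orders, so the two bracketings --- first forming $q\circ p$, and first forming $r\circ q$ --- yield the same linearly ordered set and hence the same representable $\Simplexn{(rqp)^{-1}(a)}$. Tracing the definitions, the left-hand side of Eq.~\ref{OplaxAssoc} is the universal map induced by first decomposing along the pair $(g,f)$ and then along $h$, while the right-hand side first decomposes along $(h,g)$ and then along $f$; the strict associativity just recorded identifies the two, establishing the coherence equation.

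I expect the bookkeeping in this last step to be the only genuine obstacle. One must check that the two composites of universal morphisms out of the triple pushout really are induced by the two bracketings of the fibre decomposition, which amounts to assembling the squares of Eq.~\ref{OplaxComp} for the four pairs $(g,f)$, $(h,g\circ f)$, $(h,g)$ and $(h\circ g,f)$ and verifying their compatibility over the two intermediate apices $\overline{\alpha}(\theta)(c_1)$ and $\overline{\alpha}(\theta)(c_2)$. Once the orientation conventions of $\sSetfop$ are fixed, every such comparison reduces to the strict associativity of concatenation in $\Delta_+$ transported through $\fG^*$, and no homotopical input is required.
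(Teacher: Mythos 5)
Your proposal is correct and follows essentially the same route as the paper: you verify normality and the unit conditions directly, and you establish the coherence equation (Eq.~\ref{OplaxAssoc}) by exhibiting both sides as the same universal morphism out of the triple pushout, which is exactly what the paper does by assembling the squares of Eq.~\ref{OplaxComp} into a single commutative diagram whose bottom two rows have that pushout as their colimit. Your added remark that the required commutativity ultimately reduces to the strict associativity of concatenation in $\Delta_+$, transported through $\fG^*$, is precisely the unstated reason the paper's diagram commutes.
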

\begin{proof}
 The unitality conditions $\overline{A}(\theta)_{\id, f} = \id_{\overline{\alpha}(\theta)(f)}=\overline{A}(\theta)_{f, \id}$ are straightforward to verify. Given a triple $\xymatrixcolsep{1.1pc} \xymatrix{w \ar[r]^-g & x \ar[r]^-h & y \ar[r]^-i & z}$ of composable morphisms in $\Singop{f(0)}\times\phi(0)$, along similar lines as above one has a diagram in $\sSetf$
 \begin{equation*}
 \xymatrixrowsep{.7pc}\xymatrixcolsep{1.2pc} \xymatrix{ & & & \overline{\alpha}(\theta)(ihg) & & & \\
 & & \overline{\alpha}(\theta)(gh) \ar[ru] & & \overline{\alpha}(\theta)(ih) \ar[lu] & & \\
 & \overline{\alpha}(\theta)(g) \ar[ru] & & \overline{\alpha}(\theta)(h) \ar[ru] \ar[lu] & & \overline{\alpha}(\theta)(i) \ar[lu] & \\
 \overline{\alpha}(\theta)(w) \ar[ru] & & \overline{\alpha}(\theta)(x) \ar[ru] \ar[lu] & & \overline{\alpha}(\theta)(y) \ar[ru] \ar[lu] & & \overline{\alpha}(\theta)(z) \ar[lu] }
\end{equation*}
Then the associativity condition,
\begin{equation*}
  \overline{A}(\theta)_{i,hg} \circ \left(\id_{\overline{\alpha}(\theta)(i)} \coprod_{\overline{\alpha}(\theta)(y)} \overline{A}(\theta)_{h,g}\right)= \overline{A}(\theta)_{ih,g} \circ \left( \overline{A}(\theta)_{i,h} \coprod_{\overline{\alpha}(\theta)(x)} \id_{\overline{\alpha}(\theta)(g)}\right),
\end{equation*}
holds since both sides of the equation are the universal morphism for the colimit of the bottom two rows of the above diagram.
\end{proof}

\paragraph{The morphism $\alpha$ is a symmetric monoidal lax functor.} The diagram in Eq.~\ref{RKEOutline} defines, for each $(f,\phi) \in N_k(\FinDel)$ a functor
\begin{equation*}
 R_{(f,\phi)}: \Fun^{\rm cart}\left(\Singop{f(0)}\times \Pyr{\phi(0)}, \sSetfop\right) \to \Fun^{\rm cart}\left( \Cartop{f(0)}\times \Pyr{M_\phi}, \sSetfop \right).
\end{equation*}
We define $\alpha(\theta)$, for $((f,\phi),\theta)\in \Un(\csmAlg)_k$, to be
\begin{equation*}
 \alpha(\theta):= R_{(f,\phi)} \overline{\alpha}(\theta) \in \Unstr{\sSetfop}_k,
\end{equation*}
where $\Unstr{\sSetfop}$ is the sub simplicial set of the unstraightening of $\csmtSpan{\csSetfop}$ from Proposition \ref{UnstrSimps}.

\begin{lem}
 The assignment $((f,\phi),\theta) \mapsto \alpha(\theta)$ defines a morphism of simplicial sets 
 \begin{equation*}
  \alpha:\Un(\csmAlg) \to \Unstr{\sSetfop}.
 \end{equation*}
\end{lem}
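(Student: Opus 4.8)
The plan is to verify that the assignment respects the simplicial structure, i.e.\ that for every $\gamma:[m]\to[k]$ the evident square relating $k$- and $m$-simplices commutes. Since membership $\alpha(\theta)\in\Unstr{\sSetfop}_k$ was already established by the construction via $R_{(f,\phi)}$ together with Proposition~\ref{UnstrSimps}, the only remaining point is naturality in $[k]$: writing $\gamma^*$ for the operator on both simplicial sets, I must show $\alpha(\gamma^*\theta)=\gamma^*\alpha(\theta)$. Recall that on $\Unstr{\sSetfop}$ the operator $\gamma^*$ is precomposition with $\Cartop{f_{\gamma(0),0}}\times\Pyr{M(\gamma)}$, while on $\Un(\csmAlg)$ it restricts $\theta$ along the corresponding map of source categories.

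I would factor the verification through the two stages in the construction of $\alpha(\theta)$: first the passage $\theta\mapsto\overline{\alpha}(\theta)$, and then the right Kan extension $R_{(f,\phi)}$ of Eq.~\ref{RKEOutline}. For the first stage, note that $\overline{\alpha}(\theta)$ is built directly out of the images under $\theta$ of the morphisms of $\Singop{f(0)}\times\phi(0)$, via the nabla-set span construction and the functor $\fG^*$; on objects it is simply $(s,i)\mapsto\theta(s,i)\cdot\Simplex 1$. Consequently $\overline{\alpha}$ is functorial in $\theta$, and restricting $\theta$ along $\Cart{f_{\gamma(0),0}}$ and the structure map $\phi_{\gamma(0),0}$ produces exactly the restriction of $\overline{\alpha}(\theta)$ along the induced functor $\Singop{f\gamma(0)}\times\Pyr{\phi(\gamma(0))}\to\Singop{f(0)}\times\Pyr{\phi(0)}$. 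This reduces the whole claim to compatibility of the second stage with restriction.

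For the second stage the essential point is that right Kan extension commutes with restriction along $M(\gamma)$. Both $\gamma^*\alpha(\theta)$ and $\alpha(\gamma^*\theta)$ are cartesian functors on $\Cartop{f\gamma(0)}\times\Pyr{M_{\phi\gamma}}$, the former by Lemma~\ref{UnstrLem} (which guarantees that restricting a cartesian functor along $\Pyr{M(\gamma)}$ remains cartesian) and the latter by construction. Since a cartesian functor is determined by its restriction to $\Singop{-}\times\Wedge{M_{\phi\gamma}}$, it suffices to check the two agree there. On this wedge the Kan extensions impose no further data, so, using the first-stage functoriality, the comparison comes down to the identity
\begin{equation*}
 p_\phi\circ M(\gamma) = \phi_{\gamma(0),0}\circ p_{\phi\gamma}: M_{\phi\gamma}\to\phi(0),
\end{equation*}
which holds on objects $(a,b)$ since both sides equal $\phi_{\gamma(b),0}(a)$ by the functoriality of $\phi$, together with the fact that $M(\gamma)$ carries $\Wedge{M_{\phi\gamma}}$ into $\Wedge{M_\phi}$.

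The main obstacle I expect is precisely this interaction between the right Kan extension $R_{(f,\phi)}$ and the restriction operators: the two commute only because Lemma~\ref{UnstrLem} ensures that cartesianness is preserved under restriction along $M(\gamma)$, so that each side is recovered as the Kan extension of a common restriction to the wedge. Once this is granted, verifying the identity $p_\phi\circ M(\gamma)=\phi_{\gamma(0),0}\circ p_{\phi\gamma}$ is a routine unwinding of the Grothendieck-construction definitions of $M(\gamma)$ and $p_\phi$, and compatibility with the remaining degeneracy and face operators follows in exactly the same manner.
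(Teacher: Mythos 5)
There is a genuine gap, and it sits exactly where the paper's own proof has its crux: the $\Finpt$ direction. Your first-stage reduction invokes an ``induced functor $\Singop{f\gamma(0)}\times\Pyr{\phi\gamma(0)}\to\Singop{f(0)}\times\Pyr{\phi(0)}$'', but no such functor exists. The restriction operator in the $\Finpt$ coordinate is $\Cartop{f_{\gamma(0),0}}$, which sends a singleton $\{s\}\in\Singop{f\gamma(0)}$ to the preimage $f_{\gamma(0),0}^{-1}(s)\subset f(0)$, and this is not a singleton in general (take $f_{\gamma(0),0}$ to be the fold map $\underline{2}_*\to\underline{1}_*$). Consequently $\gamma^*\alpha(\theta)$, even restricted to singletons, is computed from values of the right Kan extension $\alpha(\theta)$ at \emph{non-singleton} subsets of $f(0)$, and identifying these with the values of $\alpha(\gamma^*\theta)$ is not a formal commutation of Kan extension with restriction: it requires the hypothesis that $\theta$ is cocartesian, which your argument never uses. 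This is precisely how the paper proceeds: it enlarges $\Singop{f(0)}$ to the full subcategory $\overline{\Singop{f(0)}}\subset\Cartop{f(0)}$ containing also the subsets $f_{\gamma(0),0}^{-1}(s)$, shows that $\gamma^*$ commutes with the Kan-extension functors $R_{(f,\phi)}$ and $R_{(f\gamma,\phi\gamma)}$ over this enlarged domain, and then reduces everything to the identity $\overline{\alpha}(\gamma^*\theta)(s,-)=\coprod_{t\in f_{\gamma(0),0}^{-1}(s)}\gamma^*\overline{\alpha}(\theta)(t,-)$ of normal oplax functors $\phi\gamma(0)\nrightarrow\sp{\sSetfop}$, and it is exactly this identity that follows from cocartesianness of $\theta$. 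Without that input there is nothing forcing the two sides of your naturality square to agree; the required equality is an honest identity of disjoint unions, not a consequence of Kan extension calculus.

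A second, smaller, problem: your supporting claim that $M(\gamma)$ carries $\Wedge{M_{\phi\gamma}}$ into $\Wedge{M_\phi}$ is false. Since $M(\gamma)(a,b)=(a,\gamma(b))$, a wedge interval whose endpoints have second coordinates $b,b'$ with $|b-b'|\le 1$ is sent to an interval of width $|\gamma(b)-\gamma(b')|$ in the $[k]$-direction, which exceeds $1$ whenever $\gamma$ skips indices (e.g.\ $\gamma:[1]\to[2]$ with image $\{0,2\}$). So one cannot check agreement merely on the wedge with the remark that ``the Kan extensions impose no further data'' there: the image of $\Singop{f\gamma(0)}\times\Wedge{M_{\phi\gamma}}$ under $\Cartop{f_{\gamma(0),0}}\times\Pyr{M(\gamma)}$ meets neither $\Singop{f(0)}$ nor $\Wedge{M_\phi}$ in general, which is why the paper routes the comparison through a commuting square of $\Fun^{\rm cart}$-categories rather than through a pointwise check on the wedge. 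Your identity $p_\phi\circ M(\gamma)=\phi_{\gamma(0),0}\circ p_{\phi\gamma}$ is correct, and Lemma \ref{UnstrLem} does give the preservation of cartesianness you cite, but these only dispose of the $\Simpop$ coordinate; the essential content of the lemma lives in the $\Finpt$ coordinate, where your proof has the gap.
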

\begin{proof}
 Fix $((f,\phi),\theta) \in \Un(\csmAlg)_k$ and a morphism $\gamma: [n] \to [k]$. We must show that $\gamma^* \alpha(\theta) = \alpha(\gamma^* \theta)$, that is, that the following diagram commutes, 

 \begin{equation*}
  \xymatrixrowsep{1.1pc}\xymatrixcolsep{2pc}\xymatrix{ \Cartop{f\gamma(0)}\times\Pyr{M_{\phi\gamma}} \ar[rd]^-{\alpha(\gamma^* \theta)} \ar[d] & \\
  \Cartop{f(0)} \times \Pyr{M_\phi} \ar[r]_-{\alpha(\theta)} & \sSetfop \, ,}
 \end{equation*}
where $\gamma^* \theta$ is the composite
\begin{equation*}
 \xymatrixcolsep{2pc}\xymatrix{\Cart{f\gamma(0)}\times \phi \gamma(0) \ar[r] & \Cart{f(0)}\times\phi(0) \ar[r]^-\theta & \Alg \, .}
\end{equation*}

Letting $\overline{\Singop{f(0)}}$ denote the full subcategory of $\Cartop{f(0)}$ containing $\Singop{f(0)}$ as well as those $U \subset f(0)$ such that $U \subset f_{\gamma(0),0}^{-1}(s)$ for some $s \in f\gamma(0)$, one has that the following diagram commutes
\begin{equation*}
 \xymatrixrowsep{.9pc} \xymatrix{\Fun^{\rm cart}\left(\Cartop{f(0)}\times\Pyr{M_\phi},\sSetfop\right) \ar[r]^{\gamma^*} & \Fun^{\rm cart}\left(\Cartop{f\gamma(0)}\times\Pyr{M_{\phi\gamma}},\sSetfop\right) \\
 \Fun^{\rm cart}\left( \overline{\Singop{f(0)}}\times \Pyr{\phi(0)}, \sSetfop\right) \ar[u]^-{R_{(f,\phi)}} \ar[r]_-{\gamma^*} &  \Fun^{\rm cart} \left(\Singop{f\gamma(0)}\times \Pyr{\phi\gamma(0)}, \sSetfop\right) \ar[u]_-{R_{(f\gamma, \phi\gamma)}} }
\end{equation*}
 It therefore suffices to show that for each $s \in f\gamma(0)$,
\begin{equation*}
 \overline{\alpha}(\gamma^*\theta)(s,-)= \coprod_{t \in f_{\gamma(0),0}^{-1}(s)} \gamma^*\overline{\alpha}(\theta)(t,-)
\end{equation*}
as normal oplax functors $\phi \gamma (0) \nrightarrow \sp{\sSetfop}$. This follows from the fact that $\theta$ is cocartesian.
\end{proof}

We have therefore constructed a morphism of fibrations
\begin{equation*}
 \xymatrixrowsep{.8pc}\xymatrixcolsep{.9pc}\xymatrix{\Un(\csmAlg)\ar[rr]^-{\alpha} \ar[rd] & & \Un\left(\csmtSpan{\csSetfop}\right)\ar[ld] \\
  & \cFinDel & }
\end{equation*}
Furthermore, the image of the object $\underline{1}$ in the fibre over $(\underline{1}_*, [0])$ is $\Simplex 1$. To show that the $\alpha$ endows $\Simplex 1$ with a lax algebra structure it remains only to show that $\alpha$ defines a symmetric monoidal lax functor.

\begin{prop}
 \label{PropCombAlg}
 The morphism of simplicial sets $\alpha$ is a symmetric monoidal lax functor 
 \begin{equation*}
 \alpha:\csmAlg \rightsquigarrow \csmtSpan{\csSetfop}.
 \end{equation*}
 Hence, $\alpha$ endows the standard $1$-simplex $\Simplex 1$ with the structure of a lax algebra..
\end{prop}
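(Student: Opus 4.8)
The plan is to verify the single condition remaining in Definition \ref{DefnsmLax}. The preceding lemmas already exhibit $\alpha$ as a morphism of fibrations over $\cFinDel$ carrying $\underline 1$ to $\Simplex 1$, so it suffices to show that $\alpha$ sends cocartesian lifts of morphisms in $\cFinpt \times (\Simpin)^{\op}$ to cocartesian morphisms. I would begin by recalling the criterion for an edge in the unstraightening of a semistrict functor to be cocartesian: writing the edge in the relative nerve form of Eq.~\ref{RelNerve}, it is cocartesian over $e$ precisely when the comparison $\tau(\{0,1\})$, regarded in the fibre over the terminal vertex, is an equivalence. Since cocartesian edges are stable under composition, since the cocartesian lift of a composite is the composite of cocartesian lifts, and since $\alpha$ is a map of simplicial sets and hence respects the composites witnessed by $2$-simplices, the factorisation $(f,\phi)=(f,\id)\circ(\id,\phi)$ reduces the problem to the two generating classes $(f,\id)$ with $f\in\cFinpt$ and $(\id,\phi)$ with $\phi$ inert.

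For a morphism $(f,\id)$ the cocartesian transition functors of both $\csmAlg$ and $\csmtSpan{\csSetfop}$ implement the product decomposition of the fibres indexed by $S$ (Condition $2$ in the definition of a symmetric monoidal $\inftwo$-category). Because the monoidal product of $\csSetfop$ is the disjoint union of simplicial sets and $\alpha$ was built componentwise in this variable --- this is exactly the identity $\overline{\alpha}(\gamma^*\theta)(s,-)=\coprod_{t}\gamma^*\overline{\alpha}(\theta)(t,-)$ established in the previous lemma --- I expect the comparison $\tau(\{0,1\})$ to be the canonical isomorphism identifying a coproduct with itself, hence an equivalence. This disposes of the symmetric monoidal direction with essentially no further work.

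The essential case is $(\id,\phi)$ with $\phi$ inert, corresponding to a subinterval inclusion $\iota\colon[m]\hookrightarrow[n]$. Here I would identify the cocartesian transition functor of $\csmtSpan{\csSetfop}$ along $\phi$ with restriction of an $[n]$-tier pyramid to the sub-pyramid supported on the subinterval, that is, precomposition with $\Pyr{\iota}$. On the other side, $\alpha(\theta)$ is by Eq.~\ref{RKEOutline} the right Kan extension of the spine functor $\overline{\alpha}(\theta)$ along the collapse $\Pyr{p_\phi}$, and the restriction of $\alpha(\theta)$ to the terminal vertex is computed through the embedding $\psi_{1,[1]}\colon\phi(1)\times[1]\to M_\phi$ from the proof of Proposition \ref{UnstrSimps}, which for inert $\phi$ carries the target rectangle onto the portion of $M_\phi$ lying over $\iota([m])$. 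Tracing through this identification, both the transported source and the target object are the cartesian (pullback) completion of one and the same restricted spine, so I expect the comparison edge $\tau(\{0,1\})$ to consist of isomorphisms in $\csSetfop$ and therefore to be an equivalence. This is to be contrasted with an active $\phi$, where the analogous comparison produces the universal map from a fibre product onto $\alpha(p_2p_1)$ --- precisely the non-invertible lax structure map $\overline{A}(\theta)$ --- so that cocartesian lifts are \emph{not} preserved, exactly as befits a genuinely lax functor.

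The main obstacle is this inert case: one must pin down the cocartesian transition functor of $\csmtSpan{\csSetfop}$ along $\phi$ concretely as restriction along $\Pyr{\iota}$, and then verify, using the explicit form of $\psi_{1,[1]}$ together with the vertical constancy and cartesianness of $\alpha(\theta)$, that $\tau(\{0,1\})$ is genuinely invertible and not merely a map between two objects sharing a spine. Once this identification is secured, the conclusion --- that $\alpha$ is a symmetric monoidal lax functor and hence, by Definition \ref{DefnLaxAlg}, endows the standard $1$-simplex $\Simplex 1$ with the structure of a lax algebra --- is immediate.
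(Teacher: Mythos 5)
Your strategy is fundamentally the paper's: invoke the relative-nerve criterion (\cite{HTT} 3.2.5.2) that an edge of the unstraightening is cocartesian exactly when the comparison edge in the fibre over the target vertex is an equivalence, and verify this for inert $\phi$, where the comparison is invertible, in contrast to active $\phi$, where it is the non-invertible lax structure map. Two of your intermediate steps, however, are detours the paper avoids. The factorisation $(f,\phi)=(f,\id)\circ(\id,\phi)$ is valid but not needed: the paper handles an arbitrary $\left((f,\phi),\theta\right)\in\Un(\csmAlg)_1$ with $\phi$ inert in a single computation, since being an equivalence in $\Sp_{f(1),\phi(1)}(\csSetfop)$ only constrains the $[1]$-direction, uniformly over $U\in\Cartop{f(1)}$. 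Likewise your ``main obstacle'' --- identifying the cocartesian transition functor of $\csmtSpan{\csSetfop}$ along $\phi$ with restriction along $\Pyr{\iota}$ --- never arises: by the very criterion you state, the comparison edge is already given as part of the data of $\alpha(\theta)$, namely its restriction $\alpha(\theta)_{1,[1]}:\Cartop{f(1)}\times\Pyr{\phi(1),[1]}\to\sSetfop$, so nothing needs to be transported and then compared. What does need to be supplied is the fact your phrase ``consist of isomorphisms \ldots{} and therefore to be an equivalence'' takes for granted: Haugseng's characterisation (\cite{RuneSpans} 6.2) of the equivalences in $\Sp_{f(1),\phi(1)}$ as exactly the cartesian functors sending all morphisms of the form $(\id,\id,v)$ to equivalences. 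With that in hand, the paper simply records the shape of $M_\phi$ for inert $\phi$: the image of $\phi(1)\times[1]$ lies over a subinterval of $\phi(0)$, the collapse $p_\phi$ sends every vertical morphism to an identity, and hence $\alpha(\theta)_{1,[1]}(U,-)$ is the right Kan extension of a diagram whose vertical maps are identities --- precisely the conclusion you anticipated, reached uniformly in $f$ and without any analysis of transition functors.
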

\begin{proof}
 We must show that for every $\left((f,\phi), \theta\right) \in \Un(\csmAlg)_1$ with $\phi$ inert the functor 
 \begin{equation*}
  \alpha(\theta)_{1,[1]}:\Cartop{f(1)}\times \Pyr{\phi(1),[1]}\to \sSetfop
 \end{equation*}
 defines an equivalence in the quasicategory $\Sp_{f(1),\phi(1)}(\csSetfop)$ (\cite{HTT} 3.2.5.2).

 For $\phi$ inert, the poset $M_\phi$ is of the form
 \begin{equation*}
  \xymatrixrowsep{.8pc}\xymatrixcolsep{2.5pc} \xymatrix{               
               &               & {\color{red} \bullet} \ar@[red][d] \ar@[red][r]  & {\color{red} \bullet} \ar@[red][r]\ar@[red][d] & \cdots \ar@[red][r] & {\color{red} \bullet} \ar@[red][r] \ar@[red][d] & {\color{red} \bullet} \ar@[red][d] &               &    \\
\bullet \ar[r] &  \cdots  \ar[r] & {\color{red} \bullet} \ar@[red][r]     &{\color{red} \bullet} \ar@[red][r]     & \cdots \ar@[red][r] & {\color{red} \bullet} \ar@[red][r] & {\color{red} \bullet} \ar[r] & \cdots \ar[r] & \bullet}
 \end{equation*}
 with the image of $\phi(1)\times[1]$ indicated in red.
 
Therefore for each $U  \in \Cartop{f(1)}$ the functor $\alpha(\theta)_{1,[1]}(U,-)$ is the right Kan extension of a diagram in $\sSetfop$ of the form
\begin{equation*}
 \xymatrixrowsep{.8pc}\xymatrixcolsep{1.3pc} \xymatrix{ 
 \bullet & \ar[l] \bullet \ar[r] & \bullet & \ar[l]\cdots\ar[r] & \bullet & \ar[l] \bullet \ar[r] & \bullet \\
 \bullet \ar@{=}[u] \ar@{=}[d] & \ar[l] \ar@{=}[u] \ar@{=}[d] \bullet \ar[r] & \ar@{=}[u] \ar@{=}[d] \bullet  & \ar[l] \ar[r]\cdots & \bullet \ar@{=}[u] \ar@{=}[d] & \ar[l] \ar@{=}[u] \ar@{=}[d] \bullet \ar[r] & \ar@{=}[u] \ar@{=}[d] \bullet \\
 \bullet & \ar[l] \bullet \ar[r] & \bullet & \ar[l] \ar[r]\cdots & \bullet & \ar[l] \bullet \ar[r] & \bullet } 
\end{equation*}
As the equivalences in $\Sp_{f(1),\phi(1)}$ are exactly those cartesian functors $\Cartop{f(1)}\times\Pyr{\phi(1),[1]} \to \sSetfop$ which send all morphisms of the form $(\id,\id,v)$ to equivalences (\cite{RuneSpans} 6.2) the claimed result follows. 
\end{proof}

\begin{remark}
\label{RemCombCoalg}
By Remark \ref{laxop}, the opposite of $\alpha$ gives a symmetric monoidal lax functor
\begin{equation*}
 \chi = \alpha^\op: \csmCoalg \rightsquigarrow \left(\csmtSpan\csSetfop\right)^\op.
\end{equation*}
However, Corollary \ref{bispansop} says that $\left(\csmtSpan\csSetfop\right)^\op \simeq \csmtSpan\csSetfop$. Therefore, as a dual to Proposition \ref{PropCombAlg}, we obtain a lax coalgebra structure on $\Simplex 1$.
\end{remark}

\subsection{The lax algebra structure inherited from \texorpdfstring{$\Simplex{1}$}{the standard 1-simplex}}
\label{Sec:intospaces}

Let $\cX \in \cC_\Simp$ be a simplicial object in an $\infty$-category having finite limits. We can now show how $\cX_1$ inherits a lax algebra structure in $\csmtSpan{\cC}$ from the one endowed upon standard $1$-simplex $\Simplex 1$ in Proposition \ref{PropCombAlg}. 

We make use of the following result due to Li-Bland.
\begin{thm}[\cite{LiBland} 4.1]
\label{LiBlandthm}
 The construction which assigns to an $\infty$-category having finite limits its symmetric monoidal $\inftwo$-category of bispans defines a functor
 \begin{equation*}
  \csmtSpan{-}: \Catoo^{\rm lex} \to \Bicatoo^\otimes.
 \end{equation*}
where $\Catoo^{\rm lex}$ is the $\infty$-category of $\infty$-categories having finite limits and finite limit preserving functors between them.
\end{thm}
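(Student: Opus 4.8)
The plan is to exhibit $\csmtSpan{-}$ as a composite of functors, isolating the single genuinely non-formal step. Recall from Section~\ref{BispanConst} that $\csmtSpan{\cC} = \cU(\overline{\csmtSpan{\cC}})$, where
\begin{equation*}
 \overline{\csmtSpan \cC}:\mathcal{J} \to \cS, \quad (S_*,[n],[k]) \mapsto \cMap^{\rm cart}\left(\cCartop S \times \cPyr {n,k}, \cC \right), \qquad \mathcal{J} := \cFinpt \times \left(\Simpop\right)^2.
\end{equation*}
Writing $\Psi:\mathcal{J}^{\rm op} \to \Catoo$ for the fixed diagram $(S_*,[n],[k]) \mapsto \cCartop S \times \cPyr{n,k}$, I would first observe that the two-variable mapping functor $\cFun(-,-):\Catoo^{\rm op}\times\Catoo \to \Catoo$, precomposed with $\Psi$ in its first slot and curried in the second, already supplies a functor
\begin{equation*}
 \Catoo \to \cFun\left(\mathcal{J}, \Catoo\right), \quad \cC \mapsto \left[\, j \mapsto \cFun(\Psi(j),\cC)\,\right],
\end{equation*}
natural in $\cC$ for \emph{all} functors by post-composition. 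Composing with $(-)^\simeq$ gives the $\cS$-valued variant. This step is entirely formal and provides the homotopy coherence required to produce a functor out of an $\infty$-category.

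The content lies in restricting to \emph{cartesian} functors, and this is exactly where $\Catoo^{\rm lex}$ is forced upon us. For each $j = (S_*,[n],[k])$ the cartesian functors form a full subcategory
\begin{equation*}
 \cFun^{\rm cart}\left(\Psi(j),\cC\right) \subseteq \cFun\left(\Psi(j),\cC\right)
\end{equation*}
consisting of those diagrams that are right Kan extended from their restriction to $\cSingop S \times \cWedge {n,k}$. As Example~\ref{Sigmaintuit} makes explicit, this right Kan extension is computed pointwise by the \emph{finite} limits of $\cC$ --- terminal objects, products of singletons, and pullbacks of the middle squares. Hence, if $F:\cC\to\cD$ preserves finite limits then post-composition $F_*$ sends cartesian diagrams to cartesian diagrams, because $F$ commutes with the pointwise limits computing the Kan extension. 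To promote this objectwise stability to a coherent subfunctor I would pass to the associated cocartesian fibration over $\Catoo$: the levelwise full subcategories $\cFun^{\rm cart}(\Psi(j),-)$, being stable under the finite-limit-preserving transition functors, span a full subfibration over the base $\Catoo^{\rm lex}$, whose straightening is a subfunctor
\begin{equation*}
 \overline{\csmtSpan{-}}: \Catoo^{\rm lex} \to \cFun\left(\mathcal{J}, \cS\right).
\end{equation*}
Establishing this subfunctor coherently, rather than merely at the level of objects, is the main obstacle, and it is precisely here that finite-limit preservation is indispensable.

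With the subfunctor in hand the rest is formal. By Haugseng's construction each value $\overline{\csmtSpan{\cC}}$ lies in the full subcategory $\cSeg{\Catoo^\otimes}$ of symmetric monoidal Segal objects, so the functor automatically factors through it. Finally, I would postcompose with the localisation. The right adjoint $\cU$ of \cite{RuneSpans}~2.13 is a functor $\cSeg{\Catoo}\to\cSego{\Catoo}$; applying it pointwise in the $\cFinpt$-direction is compatible with the special-$\Gamma$ condition because $\cU$ preserves limits --- this is exactly the fact used to prove that each $\csmtSpan{\cC}$ is symmetric monoidal, and it is what renders the defining pullback square~\ref{LocPB} natural. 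Thus $\cU$ promotes to a functor $\cSeg{\Catoo^\otimes}\to\cSego{\Catoo^\otimes}=\Bicatoo^\otimes$, and the composite
\begin{equation*}
 \csmtSpan{-} : \Catoo^{\rm lex} \xrightarrow{\ \overline{\csmtSpan{-}}\ } \cSeg{\Catoo^\otimes} \xrightarrow{\ \cU\ } \Bicatoo^\otimes
\end{equation*}
is the desired functor; preservation of identities and composites is inherited from each factor.
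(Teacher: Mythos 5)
The paper offers no proof of this statement to compare against: Theorem \ref{LiBlandthm} is imported wholesale from Li-Bland (\cite{LiBland} 4.1), with the citation standing in for the argument. Judged on its own terms, your outline is essentially sound, and its skeleton --- formal functoriality of $\cC \mapsto \cFun(\Psi(j),\cC)$ by postcomposition, carving out the cartesian diagrams as a full subfibration stable under the transition functors, and then applying the localisation $\cU$ pointwise in the $\cFinpt$-direction --- is a legitimate route to the functor, and it correctly isolates where the lex hypothesis enters: cartesianness is a pointwise right Kan extension condition along $\cSingop{S}\times\cWedge{n,k} \hookrightarrow \cCartop{S}\times\cPyr{n,k}$, computed by limits over finite posets, so postcomposition with a finite-limit-preserving functor preserves it, and the standard full-subfibration lemma upgrades this objectwise stability to coherent naturality. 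What your route buys, compared with the paper's outsourcing, is a self-contained argument whose only non-formal inputs are already present in Section \ref{BispanConst}; what the citation buys the paper is avoiding precisely the fibrational bookkeeping you sketch.

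Two points in your sketch need repair, though neither is fatal. First, your stability claim addresses only the $\cC$-direction. The subfibration lives over $\Catoo^{\rm lex}\times\mathcal{J}$ (or equivalently your curried version), so you also need stability of cartesian diagrams under the transition functors $\Psi(\gamma)^*$ coming from morphisms of $\mathcal{J}$; this is exactly Proposition 3.8 of \cite{RuneSpans}, which the paper invokes to make $\overline{\csmtSpan{\cC}}$ well-defined for a \emph{fixed} $\cC$, and it must be cited rather than passed over silently --- without it the full subcategory you describe is not closed under all cocartesian pushforwards and the subfibration claim fails. Second, the straightening of your subfibration is $\Catoo$-valued, with fibres the full subcategories $\cFun^{\rm cart}(\Psi(j),\cC)$; the $\cS$-valued functor $\overline{\csmtSpan{-}}$ is obtained only after postcomposing with $(-)^\simeq$, so you should either apply $(-)^\simeq$ after the cartesian restriction or note that $\cMap^{\rm cart}$ is a union of connected components of $\cMap$, making the two orders agree. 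With these two adjustments your argument goes through.
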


Recall that any simplicial object $\cX_\bullet \in \cC_\Simp$ defines a finite limit preserving functor $\cX:\csSetfop \to \cC$  by right Kan extension,
\begin{equation*}
 \xymatrixrowsep{1.1pc} \xymatrix{ \csSetfop \ar[r]^-\cX & \cC \\
 \Simpop \ar@{^{(}->}[u] \ar[ru]_-\cX & }
\end{equation*}
By Theorem \ref{LiBlandthm} this induces a symmetric monoidal functor
\begin{equation*}
 \cX: \csmtSpan{\csSetfop} \to \csmtSpan{\cC}.
\end{equation*}
Then as an immediate corollary of Proposition \ref{PropCombAlg} one has the following.
\begin{thm}
\label{MainLaxAlg}
 Let $\cC$ be an $\infty$-category with finite limits and let $\cX_\bullet \in \cC_\Simp$ be a simplicial object. Then the composite
 \begin{equation*}
  \alpha_\cX: \xymatrix{\csmAlg \ar@{~>}[r]^-{\alpha} & \csmtSpan{\csSetfop} \ar[r]^-\cX & \csmtSpan{\cC}},
 \end{equation*}
is a symmetric monoidal lax functor endowing $\cX_1$ with the structure of a lax algebra.
\end{thm}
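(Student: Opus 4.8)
The plan is to deduce the statement from Proposition \ref{PropCombAlg} by observing that post-composing the lax functor $\alpha$ with the \emph{honest} symmetric monoidal functor $\cX$ preserves the property of being a symmetric monoidal lax functor. First I would record that $\cX:\csmtSpan{\csSetfop}\to\csmtSpan{\cC}$ really is a morphism in $\Bicatoo^\otimes$: the right Kan extension $\cX:\csSetfop\to\cC$ preserves finite limits, so it is a morphism in $\Catoo^{\rm lex}$, and applying Li--Bland's functor $\csmtSpan{-}$ (Theorem \ref{LiBlandthm}) produces a genuine symmetric monoidal functor, that is, a natural transformation in $\cFun(\cFinDel,\Catoo)$.

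The key step is to pass to unstraightenings. Since $\cX$ is an honest morphism in $\cFun(\cFinDel,\Catoo)$, its unstraightening $\Un(\cX):\Un(\csmtSpan{\csSetfop})\to\Un(\csmtSpan{\cC})$ is a morphism over $\cFinDel$ that preserves \emph{all} cocartesian morphisms. By definition the composite $\alpha_\cX$ is the morphism $\Un(\cX)\circ\alpha:\Un(\csmAlg)\to\Un(\csmtSpan{\cC})$ of cocartesian fibrations over $\cFinDel$. To verify that $\alpha_\cX$ is a symmetric monoidal lax functor in the sense of Definition \ref{DefnsmLax}, I would check that it carries cocartesian lifts of morphisms in $\cFinpt\times(\Simpin)^{\rm op}$ to cocartesian morphisms; this is immediate, since $\alpha$ sends such lifts to cocartesian morphisms by Proposition \ref{PropCombAlg} and $\Un(\cX)$ preserves all cocartesian morphisms. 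Hence $\alpha_\cX$ is a symmetric monoidal lax functor, and so by Definition \ref{DefnLaxAlg} a lax algebra object in $\csmtSpan{\cC}$.

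It remains to identify the underlying object as $\cX_1$. The underlying object of the lax algebra is the image of $\underline{1}$ in the fibre over $(\underline{1}_*,[0])$, which by Proposition \ref{PropCombAlg} is $\alpha(\underline{1})=\Simplex 1$, so that $\alpha_\cX(\underline{1})=\cX(\Simplex 1)$. Since $\cX$ is the right Kan extension along the fully faithful inclusion $\Simpop\hookrightarrow\csSetfop$, it restricts to $\cX_\bullet$ on $\Simpop$; evaluating at $[1]$, whose image under the initial simplicial object $\Simplex\bullet$ is $\Simplex 1$, gives $\cX(\Simplex 1)=\cX_1$. As this is an immediate corollary there is no real obstacle; the only point requiring care is that the composite of (lax) functors in this model is literally the composite of the corresponding morphisms of unstraightenings, and that the cocartesian-preservation condition for inert morphisms is stable under post-composition with a functor preserving all cocartesian morphisms---precisely the general principle that composing a lax functor with an honest functor yields a lax functor.
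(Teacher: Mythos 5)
Your proposal is correct and follows essentially the same route as the paper: the paper also obtains the finite-limit-preserving functor $\cX:\csSetfop\to\cC$ by right Kan extension, applies Li--Bland's Theorem \ref{LiBlandthm} to get the symmetric monoidal functor $\cX:\csmtSpan{\csSetfop}\to\csmtSpan{\cC}$, and then declares the result an immediate corollary of Proposition \ref{PropCombAlg}. The only difference is that you spell out what the paper leaves implicit, namely that the unstraightening of an honest natural transformation preserves all cocartesian morphisms, so post-composing $\alpha$ with $\Un(\cX)$ still carries cocartesian lifts of inert morphisms to cocartesian morphisms, and that the underlying object $\cX(\Simplex 1)$ is $\cX_1$ by full faithfulness of $\Simpop\hookrightarrow\csSetfop$.
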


\begin{remark}
 Following Remark \ref{RemCombCoalg} we obtain a symmetric monoidal lax functor $\chi_\cX$ endowing $\cX_1$ with the structure of a lax coalgebra.
\end{remark}

\subsection{Associativity and the \texorpdfstring{$2$}{2}-Segal condition}
\label{Sec:SegAssoc}

Having equipped the object of $1$-simplices $\cX_1$ of a simplicial object $\cX_\bullet\in \cC_{\Simp}$ with a lax algebra structure in $\csmtSpan{\cC}$ in Theorem \ref{MainLaxAlg}, we now demonstrate that the $2$-Segal condition is exactly the right condition that enforces the associativity of this structure.

\begin{defn}{\cite{DK12} 2.3.2}
\label{2SegDefn}
A simplicial object $\cX \in \cC_\Simp$ is a $2$-Segal object if and only if for every $n\geq 3$ and every $0 \leq i < j \leq n$, the image of the squares 
\begin{equation}
\label{SegPO}
 \xymatrixrowsep{.9pc}\xymatrixcolsep{1.5pc} \xymatrix{ \Simplex{\{0,i\}} \ar[r] \ar[d] & \Simplex{\{0,i,\ldots,n\}} \ar[d] &{\rm and} & \Simplex{\{j,n\}} \ar[r] \ar[d] & \Simplex{\{0,\ldots,j\}} \ar[d] \\
 \Simplex{\{0,\ldots, i\}} \ar[r] & \Simplex n &  & \Simplex{\{j,\ldots,n\}} \ar[r] & \Simplex n}
\end{equation}
under $\cX$ are pullbacks in $\cC$ and for each $n \geq 2$ and $0 \leq i < n$, the image of the square
\begin{equation}
\label{SegUn}
 \xymatrixcolsep{.8pc}\xymatrixrowsep{.8pc}\xymatrix{\Simplex{\{i,i+1\}} \ar[r] \ar[d] & \Simplex n \ar[d] \\
 \Simplex{\{i\}} \ar[r] & \Simplex{n-1}}
\end{equation}
under $\cX$ is a pullback in $\cC$. 
In particular, a {\em $2$-Segal space} is a $2$-Segal object in $\cS$, the $\infty$-category of spaces.
\end{defn}
\begin{remark}
 What we call a $2$-Segal object is called a {\em unital $2$-Segal object} in \cite{DK12} and a {\em decomposition space} in \cite{KockI}.
\end{remark}
\begin{remark}
 For a simplicial object $\cX$ to be $2$-Segal it suffices for the images under $\cX$ of the squares in Eq.~\ref{SegPO} to be equivalences for $i=0$ or $j=n$ (\cite{DK12} 2.3.2).
\end{remark}

\begin{thm}
 \label{MainAlg}
 Let $\cC$ be an $\infty$-category with finite limits. Then a simplicial object $\cX \in \cC_\Simp$ is a $2$-Segal object if and only if the symmetric monoidal lax functor $\alpha_\cX$ of Theorem \ref{MainLaxAlg} endows $\cX_1$ with the structure of a algebra. 
 \end{thm}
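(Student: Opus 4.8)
The plan is to recognise that the lax algebra $\alpha_\cX$ upgrades to an honest algebra precisely when its lax coherence $2$-morphisms are invertible, and then to read off from their explicit description that this invertibility \emph{is} the $2$-Segal condition. By Definitions \ref{DefnsmLax} and \ref{DefnLaxAlg}, an algebra object is a symmetric monoidal functor, which under unstraightening is a morphism of fibrations over $\cFinDel$ preserving cocartesian lifts of \emph{all} morphisms, whereas the lax functor $\alpha_\cX$ only preserves them over $\cFinpt \times (\Simpin)^{\rm op}$. Since cocartesian morphisms are closed under composition and every morphism of $\Simpop$ factors as an inert morphism followed by an active one, $\alpha_\cX$ is an algebra iff it additionally preserves cocartesian lifts of active morphisms. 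Using the multiplicativity built into a symmetric monoidal $\inftwo$-category, together with the fact that lifts over $\cFinpt$ are already preserved, one reduces to the factor $f$ constant on $\underline 1_\ast$; it then remains to analyse, for each active $\phi \in N_1(\Simpop)$ and each cocartesian $\theta$ lying over it, whether $\alpha_\cX(\theta)$ is cocartesian.

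First I would translate this into a condition in $\cC$. By \cite{HTT} 3.2.5.2, applied to the semistrict functor $\csmtSpan{\cC}$ exactly as in the proof of Proposition \ref{PropCombAlg}, the edge $\alpha_\cX(\theta)$ is cocartesian iff its $(1,[1])$-component $(\alpha_\cX(\theta))_{1,[1]} \colon \Cartop{f(1)} \times \Pyr{\phi(1),[1]} \to \cC$ is an equivalence in $\Sp_{f(1),\phi(1)}(\cC)$, which by \cite{RuneSpans} 6.2 amounts to a single comparison map in $\cC$ being an equivalence. The key identification is that, because $\alpha_\cX = \cX \circ \alpha$ with $\cX$ the finite-limit-preserving functor of Theorem \ref{LiBlandthm}, this comparison map is $\cX$ applied to the combinatorial lax structure map $\overline{A}(\theta)_{g,f}$ constructed in Section \ref{Sec:algcomb}. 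That map is the canonical inclusion of a union of standard simplices, glued along shared faces according to the poset $M_\phi$, into the simplex $\Simplex{\phi(0)}$; since $\cX(\Simplex{n}) = \cX_n$ and $\cX$ sends such unions to the corresponding iterated fibre products, $\cX((\alpha_\cX(\theta))_{1,[1]})$ becomes precisely the canonical map from $\cX_{\phi(0)}$ to a fibre product of lower $\cX_k$ dictated by the subdivision encoded by $\phi$ and the ordering data in $\theta$.

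Finally I would match these comparison maps to the squares of Definition \ref{2SegDefn}. The active maps $[1] \to [n]$ and their composites yield exactly the polygon-subdivision maps $\cX_n \to \cX_{\{0,\ldots,i\}} \times_{\cX_{\{0,i\}}} \cX_{\{0,i,\ldots,n\}}$ of Eq.~\ref{SegPO}, while the active degeneracies yield the unitality maps of Eq.~\ref{SegUn}. Invoking the Remark that $2$-Segality may be tested on the squares with $i=0$ or $j=n$, and using closure of cocartesian morphisms under composition to pass between a generating set and the whole family, one obtains that $\alpha_\cX$ preserves all active cocartesian lifts iff every comparison map of Eq.~\ref{SegPO} and Eq.~\ref{SegUn} is an equivalence, i.e. iff $\cX$ is $2$-Segal. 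For the direction $2$-Segal $\Rightarrow$ algebra one verifies each comparison map is an equivalence by decomposing it into the pullback squares of Definition \ref{2SegDefn} and using that $\cX$ preserves limits; for the converse one specialises to the strings $\theta$ realising the generating squares. The main obstacle is the bookkeeping of the second paragraph: confirming that $\cX((\alpha_\cX(\theta))_{1,[1]})$ is genuinely the subdivision map of Definition \ref{2SegDefn}—threading the cartesian and vertically-constant conditions through $M_\phi$ and $\overline{A}(\theta)_{g,f}$—and correctly handling the unit/degeneracy case so that both families of squares are accounted for.
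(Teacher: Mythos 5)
Your outline is, in structure, the paper's own proof: you reduce ``algebra'' to the invertibility of the comparison maps over active morphisms with $f$ constant on $\underline{1}_\ast$, identify each comparison map as $\cX$ applied to the gluing map $\overline{A}(\theta)_{g,f}$ from Section \ref{Sec:algcomb}, obtain ``algebra $\Rightarrow$ $2$-Segal'' by specialising $\theta$ to strings in $\Alg$ realising the squares of Eqs.~\ref{SegPO} and \ref{SegUn} (this is exactly the paper's Lemma \ref{sublemone}), and propose to get the converse by decomposing a general comparison map into elementary pieces (the paper's Lemma \ref{sublemtwo}).

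The gap is in that converse direction, where you say one verifies a general comparison map is an equivalence ``by decomposing it into the pullback squares of Definition \ref{2SegDefn}.'' Concretely, for a pair $p_1\colon X_0\to X_1$, $p_2\colon X_1\to\underline{1}$ with $X_1=\{1,\dots,k\}$, the source of the comparison map is built by gluing each $\Simplex{\kappa_i}$ by its long edge onto the $i$-th \emph{spine} edge of $\Simplex{\kappa}$. The squares of Definition \ref{2SegDefn} only ever glue along chords $\Simplex{\{0,i\}}$ or $\Simplex{\{j,n\}}$ through the initial or final vertex, so for $k\geq 3$ at least one of the required gluings attaches along an interior edge and is \emph{not} an instance of Eq.~\ref{SegPO} or Eq.~\ref{SegUn}, and no reordering of the gluings avoids this. (Such interior gluings do \emph{follow} from Definition \ref{2SegDefn}, but only via a nontrivial induction using two-out-of-three for equivalences; establishing this is precisely the content of the equivalence of definitions proved by G\'alvez-Carrillo--Kock--Tonks.) The paper closes exactly this gap by importing the G\'alvez-Carrillo--Kock--Tonks reformulation (\cite{KockI} 3): $\cX$ is $2$-Segal if and only if it sends every pushout in $\Delta$ of an active map along an inert map to a pullback. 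With that form in hand, the iterated decomposition of the paper's Eqs.~\ref{KappaPO} and \ref{KappaComp} applies verbatim, each stage being such an active--inert pushout. So your plan needs this known equivalence (or a proof of it) as an explicit additional input; with that repair, the remainder of your argument --- including the reduction to target $\underline{1}$ by symmetric monoidality, the iteration of the binary case, and the handling of empty fibres for unitality --- coincides with the paper's proof.
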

\begin{proof}
We must show that $\cX$ is a $2$-Segal object if and only if $\alpha_\cX(\theta)_{1,[1]}$ is an equivalence in $\Sp_{f(1),\phi(1)}(\cC)$ for every $\left((f,\phi), \theta\right) \in \Un(\csmAlg)_1$. 
 
 Every active morphism $\psi:[n] \actmorR [m]$ can be decomposed as 
 \begin{equation*}
 \bigvee_{i=1}^{n} \psi_i:\xymatrixcolsep{1.4pc}\xymatrix{\displaystyle\bigvee_{i=1}^{n}[1] \ar@{->|}[r]& \displaystyle\bigvee_{i=1}^{n}[m_i]},
 \end{equation*}
 where $\psi_i$ is the unique active morphism $[1] \actmorR [m_i]$ and $[m] = \vee_i [m_i]$. Every inert morphism is of the form $[n] \inmorR [a] \vee [n] \vee [b]$. Since morphisms in $\Delta$ can be uniquely factored as an active morphism followed by an inert morphism, one can decompose the morphism $\phi:[m]\leftarrow[n] \in N_1(\Delta^{\rm op})$ as
 \begin{equation*}
  \phi= \bigvee_{i=1}^{n} \phi_i : [a_\phi] \vee \left(\bigvee_{i=1}^n [m_i]\right) [b_\phi] \leftarrow \bigvee_{i=1}^n [1].
 \end{equation*}

 One can therefore write the poset $M_\phi$ schematically as
 \begin{equation*}
  \includegraphics[width=9cm]{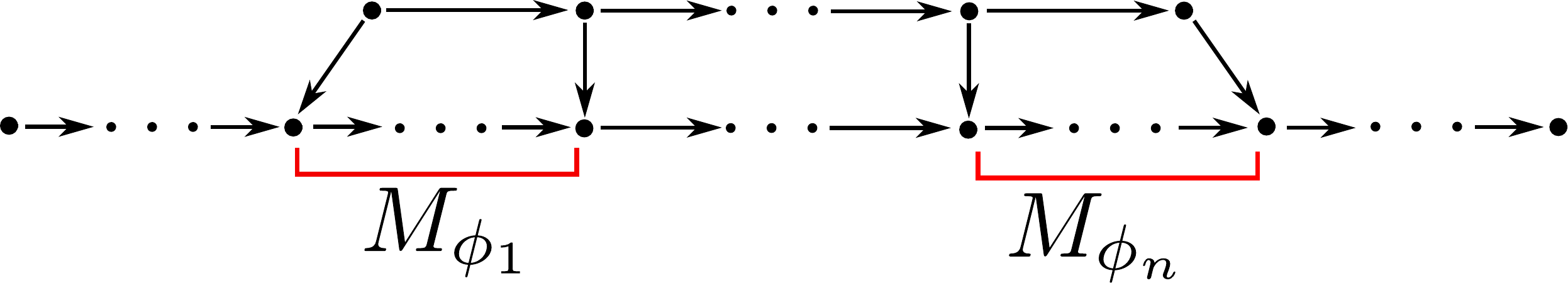}
 \end{equation*}
Denoting by $\theta_i: \Cart{f(0)} \times [1] \to \Alg$ the restriction of $\theta$ to the $i$-th summand of $\phi(1)$, it follows that for each $U \in \Cartop{f(1)}$, the functor $\alpha_\cX(\theta)_{\phi(1),[1]}(U,-)$ is the right Kan extension of a diagram in $\cC$ of the form
\begin{equation*}
  \includegraphics[height=2.25cm]{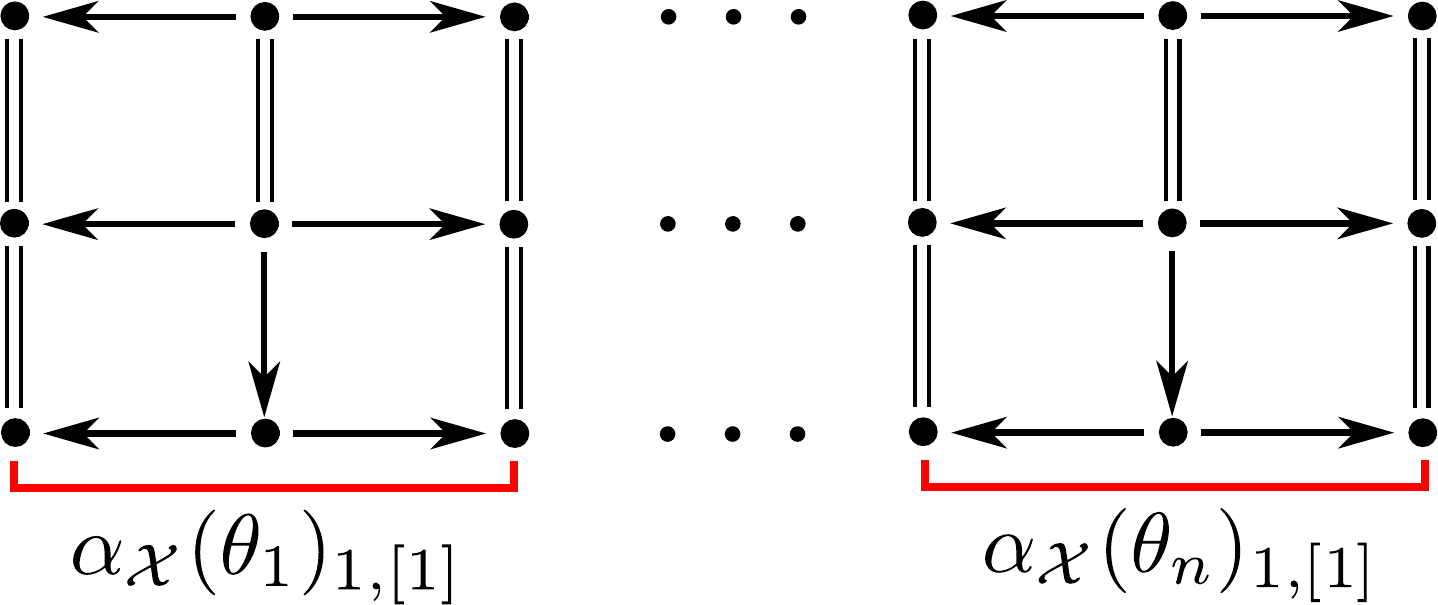}
 \end{equation*}
Therefore $\alpha_\cX(\theta)_{1,[1]}$ is an equivalence if and only if $\alpha_\cX(\theta_i)_{1,[1]}$ is an equivalence for each $i$. 

We conclude that $\alpha_\cX: \csmAlg \rightsquigarrow \csmtSpan{\cC}$ is a symmetric monoidal functor if and only if $\alpha_\cX(\theta)_{1,[1]}$ is an equivalence in $\Sp_{f(1),\phi(1)}(\cC)$ for every $\left((f,\phi), \theta\right) \in \Un(\csmAlg)_1$ where $\phi:[n] \actmorL [1]$ is the unique active morphism. By Lemmas \ref{sublemone} and \ref{sublemtwo}, proven below, this latter condition is equivalent to $\cX$ satisfying the $2$-Segal condition.
\end{proof}

To conclude our proof of Theorem \ref{MainAlg} we must prove two lemmas. 

\begin{lem}
 \label{sublemone}
 Let $\cX \in \cC_\Simp$ be a simplicial object in an $\infty$-category having finite limits. If, for every $\left((f,\phi), \theta\right) \in \Un(\csmAlg)_1$ with $\phi:[n]\actmorL [1]$ being the unique active morphism $\alpha_\cX(\theta)_{1,[1]}$ is an equivalence in $\Sp_{f(1),\phi(1)}(\cC)$, then $\cX$ satisfies the $2$-Segal condition.
\end{lem}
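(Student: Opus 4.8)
The plan is to feed the hypothesis a short list of explicit data $((f,\phi),\theta)\in\Un(\csmAlg)_1$ and to recognise, in each of the resulting equivalences, one of the pullback squares defining the $2$-Segal condition. Throughout I would take $f$ constant on $\underline{1}_*$, so that $\Cartop{f(1)}$ is the poset $\{\emptyset\subset\{1\}\}$ and only the value at $U=\{1\}$ matters; this removes all symmetric-monoidal bookkeeping and leaves a single copy of each $\cX_m$ in play. With $\phi\colon[n]\actmorL[1]$ the unique active morphism, the cocartesian functor $\theta$ is then just a string $\theta_0\to\cdots\to\theta_n$ of composable maps in $\Alg$, and I would use only strings of length two together with strings built from a unit map $\underline{0}\to\underline{1}$.

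First I would make the description from the proof of Theorem~\ref{MainAlg} fully explicit in this case. After applying the finite-limit-preserving functor $\cX$, the component $\alpha_\cX(\theta)_{1,[1]}(\{1\},-)\colon\Pyr{\phi(1),[1]}\to\cC$ is the right Kan extension (as in Eq.~\ref{RKEOutline}) of a diagram whose vertices are the objects obtained by applying $\cX$ to the simplices $\alpha(\theta)$; because $\cX$ preserves finite limits it carries the spine pushouts over $M_\phi$ to iterated fibre products. By the characterisation of equivalences used in the proof of Theorem~\ref{MainAlg}, the assertion that $\alpha_\cX(\theta)_{1,[1]}$ is an equivalence in $\Sp_{f(1),\phi(1)}(\cC)$ says exactly that every vertical morphism $(\id,\id,v)$ is sent to an equivalence; unwinding the Kan extension, this is the statement that the comparison map from the value of $\cX$ on the total composite to the fibre product computed along the spine is an equivalence.

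It then remains to match these comparison maps to Definition~\ref{2SegDefn}. For the squares of Eq.~\ref{SegPO} I would take $\theta$ to be the composable pair $\underline{n}\xrightarrow{p_1}\underline{n-i+1}\xrightarrow{p_2}\underline{1}$ in which $p_1$ merges a single consecutive block of size $i$ and $p_2$ is the total map: the total composite is $\underline{n}\to\underline{1}$, so $\cX$ of the long edge is $\cX_n$, while the spine side simplifies to $\cX_i\times_{\cX_1}\cX_{n-i+1}$, and the equivalence becomes precisely the requirement that the corresponding square in Eq.~\ref{SegPO} be a pullback. Letting the position and size of the block vary realises each of the single-membrane squares, which by the Remark following Definition~\ref{2SegDefn} suffice to force all squares of Eq.~\ref{SegPO} to be pullbacks. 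For the unitality squares of Eq.~\ref{SegUn} I would instead take $\theta$ to be $\underline{n-1}\hookrightarrow\underline{n}\to\underline{1}$ with the first map an injection omitting a single element: the omitted element has empty preimage, so $\alpha$ contributes a vertex $\Simplex{0}$ and hence a copy of $\cX_0$, and the comparison becomes $\cX_{n-1}\to\cX_n\times_{\cX_1}\cX_0$, which is exactly the pullback assertion of Eq.~\ref{SegUn}. Together these two families give the $2$-Segal condition.

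I expect the crux to be the explicit identification carried out in the middle two steps: confirming that the right Kan extension over $\Pyr{M_\phi}$, after restriction to the $\phi(1)\times[1]$ part and passage through $\cX$, really yields the fibre product appearing in the relevant $2$-Segal square and not some reindexed or partially collapsed variant. This is a bookkeeping argument with the poset $M_\phi$ and the spine inclusion $\Wedge{M_\phi}\hookrightarrow\Pyr{M_\phi}$, and the delicate point is to check that the merging map chosen for $\theta$ aligns the vertices of the Kan-extension diagram with the faces $\Simplex{\{0,i\}}$, $\Simplex{\{0,\ldots,i\}}$ and $\Simplex{\{0,i,\ldots,n\}}$ of Eq.~\ref{SegPO}. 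Once this dictionary is established the remaining verifications are formal.
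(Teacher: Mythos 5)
Your proposal is correct and follows essentially the same route as the paper: reduce the equivalence in $\Sp_{f(1),\phi(1)}(\cC)$ to the single comparison map $\delta$ from $\cX$ of the composite down to the fibre product along the spine, then feed in pairs consisting of a block-merging map followed by the total map to realise the squares of Eq.~\ref{SegPO}, and a single-element-omitting injection followed by the total map to realise the squares of Eq.~\ref{SegUn}. The only cosmetic differences are that you allow the merged block to sit at an arbitrary position (the paper uses only initial and final blocks, which already match Eq.~\ref{SegPO} exactly), and your appeal to the Remark after Definition~\ref{2SegDefn} is unnecessary, since your family of squares contains those of Eq.~\ref{SegPO} outright.
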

\begin{proof}
Consider, for each $n\geq 3$ and $i,j =1,\ldots, n-1$, the following pairs of composable morphisms in $\Alg$:
\begin{equation}
\label{SegPair}
 \xymatrixrowsep{.8pc} \xymatrix{\{1,\ldots,n\} \ar[r]^-{p^i_1} & \{i,\ldots,n\} \ar[r]^-{p^i_2} & \underline{1} \\
 \{1,\ldots,n\} \ar[r]_-{p^j_1} & \{1,\ldots,j+1\} \ar[r]_-{p^j_2} & \underline{1} \ .} 
\end{equation}
The morphism $p^i_1$ maps $a$ to $i$ when $1\leq a \leq i$ and $a$ otherwise, while $p^j_1$ maps $a$ to $j+1$ when $j+1 \leq a \leq n$ and $a$ otherwise. The linear orders on the fibres are induced by the ordering on $\{1,\ldots, n\}$. Observe that 
\begin{equation*}
\alpha(p^i_2) \coprod_{\alpha(\{i,\ldots,n\})} \alpha(p^i_1) \ \  {\rm and} \ \ \alpha(p^j_2) \coprod_{\alpha(\{1,\ldots,j+1\})} \alpha(p^j_1)
\end{equation*}
are, respectively, the pushouts of the left and right square of Eq.~\ref{SegPO}. Furthermore, $\alpha(p^i_2 p^i_1) = \Simplex n = \alpha(p^j_2 p^j_1)$, and the corresponding components of the lax structure on $\alpha$ agree with the ones arising from the squares in Eq.~\ref{SegPO}.

Next, consider for each $n \geq 2$ and $0\leq i < n$ the pair of composable morphisms in $\Alg$:
\begin{equation}
\label{UnitPair}
 \xymatrix{ \{1,\ldots, n-1\} \ar[r]^-{e^i_1} & \{1,\ldots, n\} \ar[r]^-{e^i_2} & \underline{1} \ ,}
\end{equation}
where $e^i_1$ is the evident map which skips the element $i+1$ in $\{1,\ldots, n\}$. Observe that 
\begin{equation*}
 \alpha(e^i_2) \coprod_{\alpha(\{1,\ldots,n\})} \alpha(e^i_1)
\end{equation*}
is the pushout of Eq.~\ref{SegUn}. We also have that $\alpha(e^i_2 e^i_1) = \Simplex{n-1}$, and the corresponding components of the lax structure on $\alpha$ agree with the ones arising from the square in Eq.~\ref{SegUn}.

Finally, consider $\left((f,\phi),\theta\right) \in \Un(\csmAlg)_1$, where $f$ is constant on $\underline{1}_\ast$ and $\phi:[2]\actmorL[1]$ is the unique active map. Then the functor $\theta$ is a pair of composable morphisms
\begin{equation*}
  \xymatrix{X_0 \ar[r]^-{p_1} & X_1 \ar[r]^-{p_2} & X_2}
\end{equation*}
and $\alpha_\cX(\theta)_{1,[1]}$ is the diagram
\begin{equation*}
 \xymatrixrowsep{.9pc} \xymatrix{ \alpha_\cX(X_1) & \alpha_\cX(p_2 p_1) \ar[r] \ar[l] & \alpha_\cX(X_2) \\
 \alpha_\cX(X_1) \ar@{=}[u] \ar@{=}[d] & \alpha_\cX(p_2 p_1) \ar[l] \ar@{=}[u] \ar[r] \ar[d]^-\delta & \alpha_\cX(X_2) \ar@{=}[u] \ar@{=}[d] \\
 \alpha_\cX(X_1) & \alpha_\cX(p_2) \times_{\alpha_\cX(X_1)} \alpha_\cX(p_1) \ar[r] \ar[l] & \alpha_\cX(X_2) }
\end{equation*}
The functor $\alpha_\cX(\theta)_{1,[1]}$ is an equivalence in $\Sp_{\underline{1},[1]}$ precisely when $\delta$ is an equivalence. 

Taking $\theta$ to be the pairs of morphisms defined in Eqs. \ref{SegPair} and \ref{UnitPair} it follows that $\cX$ is a $2$-Segal object.
\end{proof}

For the second of the two lemmas which complete Theorem \ref{MainAlg} we must make use of an equivalent formulation of the $2$-Segal condition due to G\'alvez-Carrillo--Kock--Tonks (\cite{KockI} 3). They show that $\cX \in \cC_\Simp$ is $2$-Segal if and only if the image under $\cX$ of every pushout square in $\Delta$ of the form
\begin{equation*}
 \xymatrixrowsep{.9pc} \xymatrix{ [n] \ar@{->|}[r] \ar@{>->}[d] & [m] \ar@{>->}[d] \\
 [k] \ar@{->|}[r] & [l]}
\end{equation*}
is a pullback in $\cC$.

\begin{lem}
 \label{sublemtwo}
 Let $\cX \in \cC_\Simp$ be a $2$-Segal object. Then $\alpha_\cX(\theta)_{1,[1]}$ is an equivalence in $\Sp_{f(1),\phi(1)}(\cC)$ for every $\left((f,\phi), \theta\right) \in \Un(\csmAlg)_1$ with $\phi:[n]\actmorL [1]$ being the unique active morphism.
\end{lem}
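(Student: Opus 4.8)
The plan is to show that, under the $2$-Segal hypothesis, the single comparison map isolated in the proof of Lemma~\ref{sublemone} is always an equivalence, and to identify that map with the image under $\cX$ of the pushout squares in the G\'alvez-Carrillo--Kock--Tonks reformulation recalled just above this lemma. First I would reduce the data. By the special-$\Gamma$-space (Segal) condition defining the symmetric monoidal structure, $\alpha_\cX(\theta)_{1,[1]}$ is an equivalence in $\Sp_{f(1),\phi(1)}(\cC)$ if and only if its restriction along each singleton $\{s\}_\ast\hookrightarrow f(1)$ is; since $\theta$ is cocartesian this reduces us to $f$ constant on $\underline 1_\ast$. As $\phi:[n]\actmorL[1]$ is the unique active morphism, $\theta$ is then a chain $X_0\xrightarrow{p_1}\cdots\xrightarrow{p_n}X_n$ in $\Alg$, and exactly as in the final paragraph of the proof of Lemma~\ref{sublemone} (the case $n=2$) the cartesianness of $\alpha_\cX(\theta)_{1,[1]}$ amounts to the comparison
\begin{equation*}
 \delta:\alpha_\cX(p_n\cdots p_1)\longrightarrow \alpha_\cX(p_n)\times_{\alpha_\cX(X_{n-1})}\cdots\times_{\alpha_\cX(X_1)}\alpha_\cX(p_1)
\end{equation*}
being an equivalence in $\cC$, the target being the iterated fibre product produced by right Kan extension over $\Wedge{M_\phi}$. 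Both source and target split as products indexed by $X_n$ and $\delta$ respects this splitting, so I may further assume $X_n=\underline 1$.

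Next I would make $\delta$ explicit. Using $\alpha(X_i)=X_i\cdot\Simplex 1$, $\alpha(p_i)=\coprod_y\Simplex{|p_i^{-1}(y)|}$ and $\alpha(p_n\cdots p_1)=\Simplex{N}$ with $N=|X_0|$, applying the finite-limit-preserving functor $\cX$ rewrites the source as $\cX(\Simplex N)$ and the target as an iterated fibre product of the objects $\cX_m$. Concretely $\delta$ is, up to the canonical comparison out of it, the image $\cX(\iota)$ of the inclusion $\iota\colon P\hookrightarrow\Simplex N$ of the subcomplex $P$ obtained by gluing the faces $\alpha(p_i)$ along the common faces $\alpha(X_i)$ in the pattern dictated by the poset $M_\phi$.

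Finally I would argue by induction on $n$. For $n=1$ the map is an identity. For the inductive step I group off the morphism $p_1$: the two-step comparison $\cX(\Simplex N)\to\alpha_\cX(p_n\cdots p_2)\times_{\alpha_\cX(X_1)}\alpha_\cX(p_1)$ is a finite composite of images under $\cX$ of pushout squares in $\Delta$ of the admissible active-over-inert form — one square per spine edge of $\Simplex{|X_1|}$, attaching the face $\Simplex{|p_1^{-1}(y)|}$ along that edge, these being precisely the squares underlying Eqs.~\ref{SegPO} and \ref{SegUn}. Each such square is sent to a pullback by hypothesis, and since $\cX$ preserves finite limits and pullbacks compose, this comparison is an equivalence. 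The factor $\alpha_\cX(p_n\cdots p_2)=\cX(\Simplex{|X_1|})$ is the source of the length-$(n-1)$ comparison for the chain $X_1\to\cdots\to\underline 1$, an equivalence by the inductive hypothesis; as fibre products preserve equivalences I may substitute it by its own iterated fibre product, recovering the target of $\delta$. Composing the two shows $\delta$ is an equivalence, and by Lemma~\ref{sublemone}'s criterion this completes the claim.

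I expect the main obstacle to be the combinatorial bookkeeping in the inductive step: verifying that $P$ is assembled, face by face, through genuine pushout squares in $\Delta$ of the G\'alvez-Carrillo--Kock--Tonks form, and that these elementary pushouts compose to give exactly the iterated fibre product that $\delta$ targets. This is where the precise geometry of the faces $\Simplex{\{0,\dots,i\}}$ and $\Simplex{\{i,\dots,n\}}$ and their intersections — the content of Definition~\ref{2SegDefn} as repackaged by G\'alvez-Carrillo--Kock--Tonks — must be matched against the shape of $M_\phi$ arising from the unstraightening.
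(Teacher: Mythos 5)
Your proposal is correct and follows essentially the same route as the paper: reduce via symmetric monoidality to chains with target $\underline{1}$, iterate the two-step case, and resolve that case by factoring the comparison map as a composite of maps induced by elementary pushout squares in $\Delta$ (one per spine edge, attaching each face $\Simplex{|p_1^{-1}(y)|}$ along its long edge), each sent to a pullback by the G\'alvez-Carrillo--Kock--Tonks form of the $2$-Segal condition. The combinatorial bookkeeping you flag as the remaining obstacle is exactly what the paper carries out with the inductively defined pushouts $\kappa_{\leq i}$ in Eqs.~\ref{KappaPO} and \ref{KappaComp}.
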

\begin{proof}
It suffices to show that for every sequence of composable morphisms in $\Alg$
\begin{equation*}
 \xymatrix{X_0 \ar[r]^-{p_1} & X_1 \ar[r]^-{p_2} & \cdots \ar[r]^{p_n} & X_n},
\end{equation*}
the image of
\begin{equation*}
 \alpha(p_n) \coprod_{\alpha(X_{n-1})} \cdots \coprod_{\alpha(X_1)} \alpha(p_1) \to \alpha(p_n\cdots p_1)
\end{equation*}
under $\cX$ is an equivalence. Since every morphism in $\Alg$ is a disjoint union of morphisms having target the singleton set, and the functors $\alpha$ and $\cX$ are symmetric monoidal, it suffices to consider the case when $X_n = \underline{1}$. The statement for general $n$ follows by iterating the special case of $n=2$.

First, in the trivial case of $X_1 = \underline{0}$, then also $X_0=\underline{0}$ and
\begin{equation*}
 \alpha(p_2)\coprod_{\alpha(X_1)} \alpha(p_1) = \Simplex 0 = \alpha(p_2p_1).
\end{equation*}
For $X_1 \neq \underline{0}$, write $X_1 = \{1,\ldots,k\}$ and denote by
\begin{equation*}
\kappa = \fG^{-1} (p_2)\  {\rm and} \ \kappa_i = \fG^{-1} \left(p_1|_{p_1^{-1}(i)}\right), \ i \in X_1,
\end{equation*}
where $\fG^{-1}(\langle n \rangle) = [n]$ is the inverse on objects of the functor in Eq.~\ref{Augdef}. Recall that $\Simplex \kappa$ can be thought of as a standard simplex having the edges along its spine labelled by the elements of $X_1$ according to the linear ordering defined by $p_2$. The pushout $\alpha(p_2) \coprod_{\alpha(X_1)} \alpha(p_1)$ is the simplicial set obtained by gluing each simplex $\Simplex{\kappa_i}$ by its long edge to the corresponding edge on the spine of $\Simplex \kappa$. Therefore $\alpha(p_2) \coprod_{\alpha(X_1)} \alpha(p_1)$ is the iterated pushout
\begin{equation*}
 \Simplex \kappa \coprod_{i=1}^k \Simplex{\kappa_i} =\left( \cdots \left(\left(\Simplex \kappa \coprod_{\Simplex{(0,1)}} \Simplex{\kappa_1}\right) \coprod_{\Simplex{(1,2)}} \Simplex{\kappa_2} \right) \cdots \right)\coprod_{\Simplex{(k-1,k)}} \Simplex{\kappa_k}\ .
\end{equation*}

Next, set $\kappa_{\leq i}$ to be the inductively defined pushouts in $\Delta$
\begin{equation}
\label{KappaPO}
 \xymatrixrowsep{.9pc} \xymatrix{[1] \ar@{->|}[r] \ar@{>->}[d]_-{g_0} & \kappa_1 \ar@{>->}[d]  & {\rm and} & [1] \ar@{->|}[r] \ar@{>->}[d]_-{g_i} & \kappa_{i+1} \ar@{>->}[d] \\
 \kappa \ar@{->|}[r] & \kappa_{\leq 1}  &  & \kappa_{\leq i} \ar@{->|}[r] & \kappa_{\leq i+1}}
\end{equation}
where $g_0$ has image the smallest two elements of $\kappa$, and $g_i$ has image the $k_i$'th and $k_i+1$'th elements of $\kappa_{\leq i}$ where $k_i = \sum_{j=1}^i |\kappa_j| -1$. Then since $\kappa_{\leq k} = \vee_{i=1}^k \kappa_i$ one has that $\Simplex{\kappa_{\leq k}} = \alpha(p_2p_1)$. Furthermore, the morphism $\alpha(p_2) \coprod_{\alpha(X_1)} \alpha(p_1) \to \alpha(p_2p_1)$ factors as the composite
\begin{equation}
\label{KappaComp}
 \xymatrixcolsep{2pc}\xymatrix{\Simplex \kappa \displaystyle \coprod_{i=1}^k \Simplex{\kappa_i}\ar[r] & \Simplex{\kappa_{\leq 1}} \displaystyle \coprod_{i=2}^k \Simplex{\kappa_i} \ar[r]  & \cdots \ar[r] & \Simplex{\kappa_{\leq k}} \,}
\end{equation}
where each morphism arises from the pushout squares in Eq.~\ref{KappaPO}.

It follows from the G\'alvez-Carrillo--Kock--Tonks form of the $2$-Segal condition that each morphism the sequence in Eq.~\ref{KappaComp} is sent to an equivalence under $\cX$, proving the claimed result.
\end{proof}

\bibliographystyle{plain}
\bibliography{refs}

 \end{document}